\documentclass[reqno]{amsart}

\usepackage{color}
\usepackage[dvipsnames]{xcolor}

\usepackage{ifpdf}
\ifpdf 
\usepackage[pdftex]{graphicx}   % to include graphics
\pdfcompresslevel=9 
\usepackage[pdftex,     % sets up hyperref to use pdftex driver
plainpages=false,   % allows page i and 1 to exist in the same document
breaklinks=true,    % link texts can be broken at the end of line
colorlinks=true,
linkcolor=red,
% anchorcolor [black]
citecolor=green,
% filecolor [cyan]
% menucolor [red]
% runcolor [cyan - same as file color]
% urlcolor [magenta]
% allcolors -- use this if you want to set all links to the same color
pdftitle=My Document
pdfauthor=My Good Self
]{hyperref} 
\else 
\usepackage{graphicx}       % to include graphics
\usepackage{hyperref}       % to simplify the use of \href
\fi 

\usepackage{subfig}
%-------------------------------------------------------------------------------------------------
% PACCHETTI
%--------------------------------------------------------------------------------------------------
\usepackage{glossaries}
\usepackage{glossary-mcols}

\usepackage{aurical}
\usepackage{amsfonts,amsmath}	
\usepackage{amssymb}
\usepackage{verbatim}
\usepackage{amsopn}
\usepackage[english]{babel}
\usepackage{amsthm}
\usepackage{enumerate}
\usepackage{mathrsfs}	
\usepackage{mathtools}
\usepackage{esint}
\usepackage{todonotes}

\usepackage{bm}
\usepackage{bbm}
\usepackage{caption}

\captionsetup{format=hang,labelfont={sf,bf}}

%----------------------------------------------------------------------------------------
% pacchetti per le immagini
%----------------------------------------------------------------------------------------
%\usepackage[pdftex]{graphicx}
%\DeclareGraphicsExtensions{.pdf,.png,.jpg,.mps,.eps,.fig, .bmp}
%----------------------------------------------------------------------------------------
% fine pacchetti per le immagini
%----------------------------------------------------------------------------------------

%----------------------------------------------------------------------------------------
% MARGINI
%----------------------------------------------------------------------------------------

%\pagestyle{headings}
\setlength{\topmargin}{-1cm}
\setlength{\textwidth}{16cm}
\setlength{\textheight}{23cm}
\setlength{\oddsidemargin}{0pt}
\setlength{\evensidemargin}{0pt}

%\date{\today}

%--------------------------------------------------------------------------------------------------
% COMANDI MATEMATICI
%--------------------------------------------------------------------------------------------------

\theoremstyle{definition} \newtheorem{definition}{Definition}[section]
\theoremstyle{definition} \newtheorem{remark}[definition]{Remark}
\theoremstyle{plain} \newtheorem{lemma}[definition]{Lemma}
\theoremstyle{plain} \newtheorem{proposition}[definition]{Proposition}
\theoremstyle{plain} \newtheorem{theorem}[definition]{Theorem}
\theoremstyle{plain} \newtheorem{corollary}[definition]{Corollary}
\theoremstyle{definition} 
\theoremstyle{plain} 
\theoremstyle{definition}

\DeclareMathOperator{\BV}{BV}

\DeclareMathOperator{\dist}{dist}

\DeclareMathOperator{\Lip}{Lip}

\DeclareMathOperator{\Per}{\mathsf{Per}}

\newcommand{\R}{\mathbb{R}}

\newcommand{\N}{\mathbb{N}}

\newcommand{\TV}{\text{\rm Tot.Var.}}
\newcommand{\Int}{\mathrm{Int}}

\newcommand{\e}{\varepsilon}

\newcommand{\loc}{\text{\rm loc}}

\renewcommand{\L}{\mathscr L}

\newcommand{\T}{\mathbb{T}}

\renewcommand{\div}{\mathrm{div}}
%%%% MEASURES %%%%%
\renewcommand{\L}{\mathscr L}
\renewcommand{\H}{\mathscr H}

\numberwithin{equation}{section} %numera le formule in accordo con le sezioni
%\numberwithin{claim}{section}

%\theoremstyle{definition} \newtheorem{definition}{Definition}[section]
%\theoremstyle{definition} \newtheorem{remark}[definition]{Remark}
%\theoremstyle{plain} \newtheorem{lemma}[definition]{Lemma}
%\theoremstyle{plain} \newtheorem{proposition}[definition]{Proposition}
%\theoremstyle{plain} \newtheorem{theorem}[definition]{Theorem}
\theoremstyle{plain} \newtheorem*{theorem*}{Theorem}
\theoremstyle{plain} 
\theoremstyle{plain} \newtheorem*{mthm*}{Main Theorem}
\theoremstyle{plain} \newtheorem*{conjecture*}{Conjecture}
\theoremstyle{plain} 
\theoremstyle{plain} \newtheorem*{problem*}{Problem}

\title[Flow of 2D autonomous BV divergence-free vector fields]{Regularity estimates for the flow of BV autonomous divergence-free vector fields in $\mathbb{R}^2$}

\author[P.~Bonicatto]{Paolo Bonicatto}
\address{P.B. Departement Mathematik und Informatik,
Universit\"at Basel, Spiegelgasse 1, CH-4051 Basel, Switzerland.}
\email{paolo.bonicatto@unibas.ch}

\author[E.~Marconi]{Elio Marconi}
\address{E.M. Departement Mathematik und Informatik,
Universit\"at Basel, Spiegelgasse 1, CH-4051 Basel, Switzerland.}
\email{elio.marconi@unibas.ch}

\thanks{MSC class: 34C11 (primary), 35L45, 37C10}

\begin{document}
	\maketitle
	
	\begin{abstract}
	We consider the regular Lagrangian flow $X$ associated to a bounded divergence-free vector field $b$ with bounded variation. 
	We prove a Lusin-Lipschitz regularity result for $X$ and we show that the Lipschitz constant grows at most linearly in time.
	As a consequence we deduce that both geometric and analytical mixing have a lower bound of order $1/t$ as $t\to \infty$.
	\end{abstract}

\section{Introduction}
We consider a bounded vector field $b \in L^\infty([0,+\infty)\times \R^d, \R^d)$ and we introduce the following notion of flow.
\begin{definition}
We say that $X:[0,+\infty)\times \R^d\to \R^d$ is a \emph{regular Lagrangian flow} of the vector field $b$ if
	\begin{enumerate}
	\item for $\mathscr L^d$- a.e. $x\in \R^d$ the map $t\mapsto X(t,x)$ is Lipschitz, $X(0,x)=x$ and for $\mathscr L^1$-a.e. $t>0$ it holds $\partial_tX(t,x)=b(t,X(t,x))$;
	\item for every $t\ge 0$ it holds
	\begin{equation*}
	X(t,\cdot)_\sharp \mathscr L^d \le L\mathscr L^d,
	\end{equation*}
	for some $L>0$.
	\end{enumerate}
\end{definition}
Regular Lagrangian flows have been introduced in \cite{DPL_transport} in the setting of Sobolev vector fields with bounded divergence. 
The authors deduced existence and uniqueness of regular Lagrangian flows by proving the well-posedness in the class of bounded weak solutions for the strictly related 
transport equation driven by $b$:
\begin{equation}\label{E_transport}
\begin{cases}
\partial_t u + b \cdot \nabla u =0 & \mbox{for }(t,x)\in [0,+\infty)\times \R^d, \\ 
u(0,\cdot)=u_0 & \mbox{in }\R^d.
\end{cases}
\end{equation}
A purely lagrangian proof of the well-posedness for regular Lagrangian flows associated to Sobolev vector fields has been provided in \cite{CDL_DiPerna-Lions}.
Among the advantages of the proof in \cite{CDL_DiPerna-Lions} there is the following quantitative regularity estimate on the regular Lagrangian flow.
\begin{theorem}\label{T_CDL}
Let $X:[0,+\infty)\times \R^d\to \R^d$ be the regular Lagrangian flow associated to a vector field $b\in L^1_\loc([0,+\infty);W^{1,p}(\R^d))$ for some $p>1$. Then, for every $\e>0$ and every $R>0$ there exists $B\subset B_R(0)$ such that
$\L^d(B)\le \e$ and for every $t \ge 0$ it holds
\begin{equation*}
\Lip(X(t,\cdot)\llcorner (B_R(0)\setminus B))\le \exp \left( \frac{C}{\e^{1/p}}\int_0^t\|D_xb(t,\cdot)\|_{L^p}dt\right),
\end{equation*}
where $C=C(d,p,R,L)>0$. In particular if $\exists M>0$ such that for every $t>0$ it holds $\|Db(t,\cdot)\|_{L^p}\le M$, then
\begin{equation*}
\Lip(X(t,\cdot)\llcorner (B_R(0)\setminus B))\le \exp \left( \frac{CM}{\e^{1/p}}t\right).
\end{equation*}
\end{theorem}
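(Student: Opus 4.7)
The natural approach is the Crippa--De Lellis strategy of \cite{CDL_DiPerna-Lions}, based on monitoring a logarithmic functional of ratios of flow distances. Fix $R > 0$ and a small parameter $\delta > 0$, and set
\[
\Phi_\delta(t, x) := \sup_{y \in B_R(0),\, y \neq x} \log\Bigl( 1 + \frac{|X(t,x) - X(t,y)|}{\delta |x-y|} \Bigr).
\]
The plan is to prove an $L^p$ bound on $\Phi_\delta(t, \cdot)$ that is uniform in $\delta$ and grows linearly in $t$, then apply Markov's inequality to produce a small exceptional set $B \subset B_R(0)$ on whose complement $\Phi_\delta(t,\cdot)$ is bounded; exponentiating will then yield the Lipschitz estimate.

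To control $\Phi_\delta$, I would differentiate in time, justified by a standard approximation of the sup by a countable maximum. Using $\partial_t X = b(t, X)$, the time-derivative of the integrand is bounded by $|b(t, X(t,x)) - b(t, X(t,y))|/|X(t,x) - X(t,y)|$, and the classical pointwise inequality for Sobolev maps,
\[
|u(\xi) - u(\eta)| \le C_d\, |\xi - \eta| \bigl( M|Du|(\xi) + M|Du|(\eta) \bigr),
\]
applied to $u = b(t, \cdot)$ with $\xi = X(t,x)$ and $\eta = X(t,y)$, controls this by maximal functions of $|Db|$ evaluated along the flow. Taking $L^p(B_R(0))$ norms in $x$ and using the push-forward bound $X(t,\cdot)_\sharp \L^d \le L\L^d$ to convert the resulting integrals into spatial $L^p$ norms, the $L^p$-boundedness of the maximal operator (crucially using $p > 1$) then gives
\[
\|\Phi_\delta(t,\cdot)\|_{L^p(B_R(0))} \le C(d,p,R,L) \int_0^t \|Db(s,\cdot)\|_{L^p}\, ds.
\]

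Markov's inequality then yields $B \subset B_R(0)$ with $\L^d(B) \le \e$ on whose complement $\Phi_\delta(t,\cdot) \le \e^{-1/p} \|\Phi_\delta(t,\cdot)\|_{L^p(B_R(0))}$; a standard symmetrization step (enlarging $B$ so that both endpoints $x,y$ land in the good set) and sending $\delta$ to zero along an appropriate sequence yield the stated Lipschitz bound on $B_R(0)\setminus B$. The principal obstacle, and the reason the argument is restricted to $p>1$, is the $L^p$-boundedness of the maximal operator, which fails at $p=1$: the factor $\e^{-1/p}$ in the exponent is precisely the Markov-inequality trace of this $L^p$ estimate, and the use of the Sobolev pointwise inequality in the time-derivative step is what forbids a direct extension to $BV$ vector fields.
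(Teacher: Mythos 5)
The overall strategy is the right one (the Crippa--De Lellis argument: pointwise Hajlasz-type inequality, time integration, maximal functions pushed forward along the flow, Chebyshev), and you correctly identify the role of $p>1$; but the specific functional you propose does not close. You set
\[
\Phi_\delta(t,x)=\sup_{y}\log\Bigl(1+\tfrac{|X(t,x)-X(t,y)|}{\delta|x-y|}\Bigr),
\]
and claim an $L^p_x$ bound on it. Differentiating the pairwise integrand and applying the Sobolev pointwise inequality gives, after integrating in time,
\[
\log\Bigl(1+\tfrac{|X(t,x)-X(t,y)|}{\delta|x-y|}\Bigr)\le \log\bigl(1+\tfrac1\delta\bigr)+C_d\,g(t,x)+C_d\,g(t,y),\qquad g(t,x):=\int_0^t M|Db(s,\cdot)|(X(s,x))\,ds .
\]
Taking the $\sup$ over $y$ therefore produces the term $\sup_y g(t,y)$, which is an $L^\infty$ quantity with no available bound (you only know $M|Db(s,\cdot)|\in L^p$). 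So $\|\Phi_\delta(t,\cdot)\|_{L^p(B_R)}$ is not controlled by $\int_0^t\|Db(s)\|_{L^p}ds$ --- your own later remark about a ``symmetrization step enlarging $B$ so that both endpoints land in the good set'' is already a sign of this: a bound on $\sup_y$ would yield a one-sided Lusin--Lipschitz property (only $x$ restricted, $y$ arbitrary), which is strictly stronger than the theorem and false in general. There is also a secondary issue: $\Phi_\delta(0,x)=\log(1+1/\delta)$, so any claimed bound ``uniform in $\delta$'' would have to drop the initial value, which does not happen with the functional as written.

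The fix is the one implicit in the displayed pairwise inequality above, and it is exactly what Crippa--De Lellis do (up to the choice of an averaged functional $\fint_{B_r(x)}\log(1+|X(t,x)-X(t,y)|/r)\,dy$ rather than a pairwise one). Define $g(t,x)$ as above; then
\[
\|g(t,\cdot)\|_{L^p(B_R)}\le L^{1/p}\int_0^t\|M|Db(s,\cdot)|\|_{L^p}\,ds\le C(d,p)L^{1/p}\int_0^t\|Db(s,\cdot)\|_{L^p}\,ds
\]
by Minkowski, the push-forward bound $X(s,\cdot)_\sharp\mathscr L^d\le L\mathscr L^d$, and the strong $(p,p)$ maximal inequality (here $p>1$ enters). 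Choose $B:=\{x\in B_R: g(t,x)>\lambda\}$ with $\lambda=\e^{-1/p}\|g(t,\cdot)\|_{L^p(B_R)}$; Chebyshev gives $\mathscr L^d(B)\le\e$, and for $x,y\in B_R\setminus B$ the pairwise inequality (after letting $\delta\to0$, which is now harmless because the pairwise quantity has $\log(1+1/\delta)$ subtracted against $\log(1+1/\delta)$ at time $0$) yields $|X(t,x)-X(t,y)|\le e^{2C_d\lambda}|x-y|$, which is the stated bound. In short: apply Markov to $g$, not to the $\sup$-functional $\Phi_\delta$; the exceptional set is the superlevel set of $g$, and the Lipschitz estimate holds precisely because \emph{both} endpoints lie outside it.
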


The Eulerian approach of \cite{DPL_transport} has been extended to the case of vector fields $b \in L^1([0,T);BV(\R^d;\R^d))$ with bounded divergence in \cite{Ambrosio_transport}. 
In particular the result in \cite{Ambrosio_transport} covers the setting of this work: we consider bounded planar autonomous vector fields $b \in \BV(\R^2,\R^2)$
with compact support and such that $\div\, b=0$.
The case of bounded planar autonomous vector fields has been exhaustively studied in the series of papers \cite{ABC2,ABC3,ABC1}, 
where the authors provided sufficient and necessary conditions for the uniqueness of \eqref{E_transport} in the class of bounded weak solutions.
The very precise analysis in this setting is allowed by the Hamiltonian structure that these vector fields have: for any vector field $b$ as above there exists an Hamiltonian
$H\in \Lip(\R^2)$ such that $b=\nabla^\perp H$. Formally the Hamiltonian is constant along the flow of $b$ and the uniqueness problem for \eqref{E_transport} is splitted into a family of
one dimensional problems on the level sets of the Hamiltonian $H$. 

However quantitative regularity results analogous to Theorem \ref{T_CDL} in the $\BV$ setting are still unknown, even in the simpler setting of this work.
Our main result is the following.

\begin{theorem}\label{T_main}
Let $b\in \BV(\R^2,\R^2)$ be a bounded autonomous vector field with compact support and $\div\, b=0$ and denote by $X$ the associated regular Lagrangian flow.
	Then for every $\e>0$ there exists $C=C(\e,b)>0$ and $B\subset \R^2$ with $\L^2(B)\le \e$ such that for every $t\ge 0$ it holds
	\begin{equation*}
	\Lip (X(t)\llcorner (\R^2\setminus B))\le C(1+t).
	\end{equation*}
\end{theorem}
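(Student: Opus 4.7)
The plan is to exploit the Hamiltonian structure of $b$ and reduce the problem on each invariant cell to an explicit shear flow. Writing $b = \nabla^\perp H$ with $H \in \Lip(\R^2)$ compactly supported, the flow $X$ preserves $H$ along trajectories. I would first invoke the structure theorem of Alberti-Bianchini-Crippa \cite{ABC2,ABC3,ABC1}: up to an $\L^2$-null set, $\R^2$ splits as the disjoint union of $\{b=0\}$ (where $X$ is the identity and the estimate is trivial) and a countable family of maximal open sets $U_i$, each of which is foliated by closed Jordan curves that are periodic orbits of $b$. It therefore suffices to produce, for every $i$, a bad subset $B_i \subset U_i$ with $\L^2(B_i) \le \e/2^{i}$ on whose complement $X(t,\cdot)$ is $C(1+t)$-Lipschitz; the set $B := \bigcup_i B_i$ then has measure at most $\e$.

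On each cell $U_i$ I would introduce action-angle coordinates $\Phi_i : U_i \to (h_i^-, h_i^+) \times \R/\Z$, $\Phi_i(x) = (H(x), \theta_i(x))$, where $\theta_i$ is the normalized time parameter along the orbit through $x$. Denoting by $T_i(h)$ the period of the orbit $\{H=h\}\cap U_i$, the flow takes the explicit form
\begin{equation*}
\Phi_i \circ X(t,\cdot) \circ \Phi_i^{-1}(h,\theta) = \bigl( h,\ \theta + t/T_i(h) \bigr),
\end{equation*}
and a direct computation shows that its Lipschitz constant, restricted to the appropriate ``good'' subset of $U_i$, is bounded by $\Lip(\Phi_i^{-1})\cdot \Lip(\Phi_i)\cdot(1 + t\,\Lip(1/T_i))$. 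The proof thus reduces to Lusin-Lipschitz bounds on $\Phi_i$, $\Phi_i^{-1}$ and on the function $h \mapsto 1/T_i(h)$.

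The estimates on $\Phi_i^{\pm 1}$ follow from the relation $|d\Phi_i^{-1}| \sim 1/|b|$: removing from $U_i$ a set of small measure where $|b|$ is small does the job. For the period one uses the coarea formula
\begin{equation*}
T_i(h) = \int_{\{H=h\}\cap U_i} \frac{d\H^1}{|b|},
\end{equation*}
equivalently $T_i = A_i'$ where $A_i(h) := \L^2(\{H < h\}\cap U_i)$. Using the $\BV$ regularity of $b$ (and hence of $\nabla H$), one should show via a Vol'pert-type chain rule that $T_i \in \BV_\loc(h_i^-, h_i^+)$; a standard Lusin-Lipschitz theorem for BV functions then yields the needed Lipschitz subset in parameter space, which pulls back via $\Phi_i^{-1}$ to a subset of small Lebesgue measure in $U_i$ to be absorbed into $B_i$.

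The main obstacle is the possible degeneracy near critical values of $H$: at a saddle value the period $T_i(h)$ may blow up (e.g.\ logarithmically), $|b|$ vanishes on the separatrices, and the change of variables $\Phi_i$ becomes singular. Quantifying these contributions and summing the bad sets across the (possibly infinitely many) cells $U_i$ so that their total Lebesgue measure stays below $\e$ while the Lipschitz constants remain bounded uniformly in $i$ is the heart of the argument. The full $\BV$ regularity of $b$ is essential for extracting the Lusin-Lipschitz regularity of $T_i$ via coarea and the fine structure results of \cite{ABC2,ABC3,ABC1}; mere Lipschitz regularity of $H$ would not suffice.
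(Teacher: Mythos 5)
Your high-level strategy --- reduce to a shear flow via the Hamiltonian structure and deduce the $(1+t)$-growth from a Lusin--Lipschitz estimate on the period --- is indeed the same as the paper's, and your observations that the flow preserves $H$, that on a good cell the conjugated flow is $(h,\theta)\mapsto(h,\theta+t/T_i(h))$, and that $T_i=A_i'$ via the coarea formula are all on target. But there is a genuine gap in the key step, namely the claim that $|d\Phi_i^{-1}|\sim 1/|b|$ so that ``removing from $U_i$ a set of small measure where $|b|$ is small does the job.'' Write $x(h,\theta)=X(T_i(h)\theta,\sigma(h))$ for a transversal $\sigma$. The $\theta$-derivative is $T_i(h)b(x)$, fine, but the $h$-derivative $v:=\partial_h x$ solves the variational ODE $\partial_\theta v = T_i'(h)b(x)+T_i(h)\,Db(x)\,v$. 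Only the \emph{normal} component of $v$ is controlled by $1/|b|$ (from $\nabla H\cdot v=1$); the \emph{tangential} component accumulates the shear between nearby orbits over a fraction of a period and is controlled by $|Db|$, not by $|b|$. In other words, on the set where you want $\Phi_i^{\pm1}$ Lipschitz you must also control the variation of $b$ in the annuli between level sets, and this is the real work. The paper sidesteps the global coordinate change entirely: Proposition \ref{P_cov} builds a purely geometric change of variables $X_{1,2}$ between a \emph{pair} of nearby cycles (matching points along short parallel segments, not along the variational flow), Lemma \ref{L_DeltaT} bounds $|T(\gamma_1\llcorner D_1)-T(\gamma_2\llcorner D_2)|\le c_S^{-2}|Db|(A)$ for the annulus $A$ between them, and Proposition \ref{P_Lip} does the pairwise comparison of the two evolved points directly. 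The resulting bounds are expressed through the maximal function of $(H_i)_\sharp|Db|$, which is precisely the quantitative form of the BV regularity of $T_i$ you alluded to.

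A smaller omission: choosing $\L^2(B_i)\le\e/2^i$ does keep $\L^2(B)\le\e$, but the Lipschitz constants on the cells $U_i$ are not uniform in $i$, so passing to the union would lose the bound. The paper first truncates to finitely many pieces $H_1,\dots,H_N$ of the monotone decomposition (Theorems \ref{T_ABC}, Corollary \ref{C_ABC}) and then uses a compactness/separation argument to handle points lying in different cells. Also note that the ``a.e.\ level set is a finite union of cycles with finite turn'' input is Bourgain--Korobkov--Kristensen (Theorem \ref{T_Bourgain}), not the ABC papers, which are used here for the monotone decomposition and uniqueness.
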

We point out that the Lipschitz constant depends on $b$ and not only on its total variation. Moreover we cannot make the dependence of $C$ on $\e$ explicit.
On the other hand we recover the linear in time growth of the Lipschitz constant, which is optimal and peculiar of the autonomous case.

As observed in \cite{CDL_DiPerna-Lions}, it easily follows from Theorem \ref{T_main} that for every $t\ge 0$ the regular Lagrangian flow $X(t,\cdot)$ is approximately differentiable for 
$\L^2$-a.e. $x\in \R^2$. We recall that a Borel map $f:\R^2\to \R^2$ is approximately differentiable at $x\in \R^2$ if there exists a linear map $L:\R^2 \to \R^2$ such that
the difference quotients
\begin{equation*}
y\to \frac{f(x+\e y)-f(x)}{\e}
\end{equation*}
locally converge in measure as $\e\to 0$ to $Ly$. We refer to \cite{CDL_DiPerna-Lions} for further details about this aspect.

As in \cite{ABC1}, the proof of Theorem \ref{T_main} relies on the Hamiltonian structure of $b$ and in particular on the structure of the level sets of a Lipschitz Hamiltonian $H$ on the plane
such that $\nabla H\in \BV(\R^2,\R^2)$. In \cite{BKK_Sard} it is proved that for $\L^1$-a.e. $h\in H(\R^2)$ the level set $H^{-1}(h)$ consists of finitely many disjoint cycles.
In particular the trajectories of the flow are contained into these cycles.
This rigidity allows to compare the evolution of two points without appealing to the basic inequality
\begin{equation*}
|b(x)-b(y)|\le (M(x)+M(y))|x-y|,
\end{equation*}
where $M$ denotes the maximal function of $|D_xb|$. This estimate is crucial in \cite{CDL_DiPerna-Lions} and the failure of the $L^1$-continuity of the maximal operator is what fails in the proof of Theorem \ref{T_CDL} for $p=1$.
On the other hand, since our estimate depends on the structure of $H$, we cannot prove that the constant $C$ in Theorem \ref{T_main} depends on $b$ only through its total
variation.

It is well-known that quantitative regularity results on the flow $X$ associated to $b$ imply lower bounds on the mixing scale of passive scalars driven by $b$ 
through \eqref{E_transport}. In this work we will consider the two notions of geometric and analytical mixing introduced respectively in \cite{Bressan_mix_conj} and \cite{MMP_mixing,LTD_mixing}.
In this introduction let us consider the periodic setting, the proper notions for the planar case will be introduced in Section \ref{S_mixing}: we denote by $\T^2$ the two dimensional torus.
\begin{definition}\label{D_mix}
Let $k \in (0,1/2)$ and $A\subset \T^2$ such that $\L^2(A)=1/2$. We say that the flow $X$ \emph{geometrically mixes $A$ up to scale $\e$ at time $t$} if for every $x\in \T^2$ it holds
\begin{equation*}
k\L^2(B_\e(x))\le \frac{\L^2(B_\e(x)\cap X(t,A))}{\L^2(B_\e(x))}\le (1-k)\L^2(B_\e(x)).
\end{equation*}
We say moreover that $X$ \emph{analytically mixes $A$ up to scale $\e$ at time $t$} if 
\begin{equation*}
\|\chi_{X(t,A)}-\chi_{X(t,A^c)}\|_{\dot H^{-1}(\T^2)}\le \e,
\end{equation*}
where $\chi_E$ denotes the indicator function of the set $E\subset \T^2$ and
	\begin{equation*}
	\|u\|_{\dot H^{-1}(\T^2)}:=\sup \left\{\int_{\T^2}u\phi dx: \|\nabla \phi\|_{L^2}\le 1 \right\}.
	\end{equation*}

\end{definition}
It is known that the two notions are not equivalent: the relation between them has been extensively studied in \cite{Zillinger_mix_g_a}.

The flow $X$ is said to be nearly incompressible if there exists $k'>0$ such that for every $t\in [0,T]$ and every $\Omega \subset \T^2$
\begin{equation}
\frac{1}{k'}\L^d(\Omega)\le \L^d(X(t,\Omega))\le k'\L^d(\Omega).
\end{equation}
In \cite{CDL_DiPerna-Lions} it has been proved that for $p>1$ there exists a constant $C(k,k',p)$ such that if $X$ is a nearly incompressible flow of a smooth time-dependent vector field that geometrically 
mixes the set $A=[0,1)\times [0,1/2)\subset \T^2$ at time $t=1$ then
\begin{equation*}
\int_0^1\|D_xb(t)\|_{L^p}dt\ge C|\log \e|.
\end{equation*}

The same statement for $p=1$ is the well-known Bressan's mixing conjecture \cite{Bressan_mix_conj}.
From the eulerian point of view, uniqueness for $\BV$ nearly incompressible vector fields has been recently established in \cite{BB_Bressan}. 
The argument in  \cite{CDL_DiPerna-Lions} has been extended in
\cite{IKX_mixing, Seis_mixing} in order to deal with more general initial data and to the analytical mixing scale as well. Following \cite{IKX_mixing} with minor changes we get from Theorem \ref{T_main}
the following result.

\begin{theorem}\label{T_mix}
Let $b\in \BV(\R^2,\R^2)$ be bounded and divergence-free and denote by $X$ the associated regular Lagrangian flow. Let $k\in (0,1/2)$ be as in Definition \ref{D_mix} and let $A\subset B_1$ be a measurable set such that 
$\L^2(A)=\L^2(B_1)/2$. Then there exists a constant $c_g=c_g(A,b,k)>0$ such that if $X$ geometrically mixes $A$ at scale $\delta$ at time $t$ on $B_1$, then
\begin{equation*}
\delta\ge \frac{c_g}{1+t}.
\end{equation*}
Moreover there exists $c_a=c_a(A,b)$ such that for every $t\ge 0$
\begin{equation*}
\|\chi_{X(t,A)}-\chi_{X(t,A^c)}\|_{\dot H^{-1}(B_1)}\ge \frac{c_a}{1+t}.
\end{equation*}
\end{theorem}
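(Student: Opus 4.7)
The plan is to follow the scheme of \cite{CDL_DiPerna-Lions,IKX_mixing}, replacing the Sobolev-type regularity estimate of Theorem~\ref{T_CDL} by the linear-in-$t$ Lusin--Lipschitz bound of Theorem~\ref{T_main}; the fact that the Lipschitz constant therein grows only linearly (rather than exponentially, as in the Sobolev case) is precisely what turns the exponential-type mixing estimate of \cite{CDL_DiPerna-Lions} into the sharp $(1+t)^{-1}$ bound.

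Fix a small parameter $\varepsilon_0>0$, to be eventually chosen in terms of $A$ and $b$. Since $-b$ is again a bounded, compactly supported, divergence-free $\BV$ vector field whose regular Lagrangian flow is $X(t,\cdot)^{-1}$, applying Theorem~\ref{T_main} to both $b$ and $-b$ produces a set $E\subset\R^2$ with $\L^2(E)\le\varepsilon_0$ such that for every $t\ge 0$ both $X(t,\cdot)|_{\R^2\setminus E}$ and $X(t,\cdot)^{-1}|_{\R^2\setminus E}$ are $L(t)$-Lipschitz with $L(t)\le C_{\varepsilon_0}(1+t)$. Choose $\tilde\phi\in C^\infty_c(\R^2)$ (for instance a mollification of $\chi_A-\chi_{A^c}$ at a scale depending only on $A$) such that
\begin{equation*}
c_0:=\int_{\R^2}\tilde\phi\,(\chi_A-\chi_{A^c})\,dy>0,
\end{equation*}
and let $\phi_t\in W^{1,\infty}(\R^2)$ be a McShane-type Lipschitz extension to $\R^2$ of $\tilde\phi\circ X(t,\cdot)^{-1}|_{\R^2\setminus E}$, which on the good set is $L(t)\|\nabla\tilde\phi\|_\infty$-Lipschitz. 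The incompressibility of $X(t,\cdot)$ combined with the Lipschitz bound on the inverse flow gives
\begin{equation*}
\int_{\R^2\setminus E}|\nabla\phi_t|^2\,dx\le L(t)^2\|\nabla\tilde\phi\|_{L^2}^2,
\end{equation*}
while $\int_E|\nabla\phi_t|^2\le L(t)^2\|\nabla\tilde\phi\|_\infty^2\,\varepsilon_0$, whence $\|\nabla\phi_t\|_{L^2}\le C_A L(t)$. On the other hand, using that $\phi_t=\tilde\phi\circ X(t,\cdot)^{-1}$ on $\R^2\setminus E$ and changing variables,
\begin{equation*}
\int_{\R^2}\phi_t(x)\,[\chi_{X(t,A)}-\chi_{X(t,A^c)}](x)\,dx=c_0+O(\varepsilon_0).
\end{equation*}
Choosing $\varepsilon_0$ small enough in terms of $A$ and $b$, and testing the normalized function $\phi_t/\|\nabla\phi_t\|_{L^2}$ in the duality definition of $\|\cdot\|_{\dot H^{-1}(B_1)}$, we obtain $\|\chi_{X(t,A)}-\chi_{X(t,A^c)}\|_{\dot H^{-1}(B_1)}\ge c_a/(1+t)$.

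For the geometric mixing statement one can either invoke the quantitative comparison with the analytical notion (see \cite{Zillinger_mix_g_a}) or argue directly: if $X$ $\delta$-mixes $A$ geometrically on $B_1$ at time $t$, then for every ball $B_\delta(x_0)\subset B_1$ both $\L^2(B_\delta(x_0)\cap X(t,A))$ and $\L^2(B_\delta(x_0)\cap X(t,A^c))$ are of order $\delta^2$; pulling these back via $X(t,\cdot)^{-1}$ on $\R^2\setminus E$ confines both masses, modulo $O(\varepsilon_0)$, to the single ball $B_{L(t)\delta}(X(t,\cdot)^{-1}(x_0))$, which is incompatible with the non-triviality of $A$ at its own scale unless $L(t)\delta\ge c(A,k)$. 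The main technical difficulty in both parts is the uniform-in-$t$ absorption of the bad-set corrections; this is possible precisely because Theorem~\ref{T_main} allows one to fix $\varepsilon_0$ once and for all in terms of $A$ and $b$, while $L(t)$ grows only linearly in $t$.
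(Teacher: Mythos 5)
Your argument for the \emph{analytic} mixing bound is correct and takes a genuinely different route from the paper's. You push forward a fixed test function $\tilde\phi$ by the (truncated McShane extension of the) inverse flow and test it directly against $u(t)$; the change of variables and the Lipschitz bound on $\R^2\setminus E$ then give $\int u(t)\phi_t = c_0+O(\e_0)$ and $\|\nabla\phi_t\|_{L^2}\le C_A L(t)$, hence the $(1+t)^{-1}$ bound (one must truncate the McShane extension so that $\|\phi_t\|_\infty\le\|\tilde\phi\|_\infty$, which you implicitly need for the $O(\e_0)$ error on $E$). The paper instead proves the geometric statement first (Proposition \ref{P_mixing}), showing that at scale $\delta=\tilde r_0(A)/(4C(1+t))$ the mixing fails on a set of measure $\ge\alpha_1$, covers this set by disjoint $\delta$-balls, and tests against a sum of $W^{1,2}$-normalized bumps supported in those balls. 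Both are standard dual arguments; yours is closer to the original Crippa--De Lellis scheme, and has the mild advantage of not relying on the geometric result as an intermediate step.

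Your sketch for the \emph{geometric} mixing bound, however, has a genuine gap, and the alternative route via \cite{Zillinger_mix_g_a} is left unverified. The single-ball pullback argument fails because of the area scaling: $X(-t)$ is $L(t)$-Lipschitz on the good set, so $B_\delta(x_0)\setminus E$ is sent into a ball of radius $L(t)\delta$ whose area is $L(t)^2\,\L^2(B_\delta)$. Even if $X(-t,x_0)$ is a density point of $A$ at scale $L(t)\delta$ (so that the \emph{density} of $A^c$ in $B_{L(t)\delta}(X(-t,x_0))$ is $\le k_0$), the \emph{absolute} mass of $A^c$ there is $k_0 L(t)^2\L^2(B_\delta)$, which is enormous compared with the mass $k\,\L^2(B_\delta)$ you are pulling back; the resulting inequality $k/2\le k_0 L(t)^2$ is vacuously true for large $t$ and yields no lower bound on $\delta$. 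The paper avoids this by a global mass-counting argument (Proposition \ref{P_mixing}): it fixes a Vitali family $(B(x_i,r_0))_i$ of $r_0$-balls around density points of $A$ with $r_0\approx \bar r_0(A)$, lets $Z_1:=X(\bar t,\bigcup_i B(x_i,r_0)\setminus A)$ be the image of the small exceptional mass ($\L^2(Z_1)\le k_0 l\L^2(B_{r_0})$), covers the forward image $X(\bar t,\bigcup_i B(x_i,r_0/2))$ minus the bad sets by disjoint $\delta$-balls $(B(\tilde x_j,\delta))_j$, and observes that by the mixing assumption the total mass of $X(\bar t,A^c)$ contained in $\bigcup_j B(\tilde x_j,\delta)\cap Z_2^c$ is $\gtrsim k_1\,l\,\L^2(B_{r_0})$, which exceeds $\L^2(Z_1)$ once $k_0\ll k_1$ and the parameters satisfy \eqref{E_parameters}. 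The excess mass must land outside $\bigcup_i B(x_i,r_0)$ under $X(-\bar t)$, producing an explicit pair $(\tilde x_j,y)$ in the good set with $|\tilde x_j-y|<\delta$ and $|X(-\bar t,\tilde x_j)-X(-\bar t,y)|\ge r_0/2$; only then is the Lipschitz bound $C(1+\bar t)$ invoked, giving $\delta\ge r_0/(2C(1+\bar t))$. In short, the Lipschitz constant should only appear once, through a single \emph{escape} pair of points, after a global comparison of masses over the two Vitali families; that structure is what your sketch is missing.
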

We observe moreover that the dependence of $c_g$ and $c_a$ on the set $A$ can be made explicit assuming that $\Per(A,B_1)<\infty$ and we refer to Section \ref{S_mixing} 
for further details.
We finally notice that examples of mixers in this setting have been provided in \cite{Zillinger_circular,CLS_mixing} and for shear flows in \cite{BCZ_enhanced}.

\subsection{Structure of the paper}
Section \ref{S_prel} is devoted to preliminaries: we recall the structure of level sets of Lipschitz functions with compact support and gradient with bounded variation from \cite{BKK_Sard}, we introduce the notion of regular Lagrangian flow and we set a few facts about simple curves in $\R^2$.
In Section \ref{S_cov} we introduce a way to map a cycle into another cycle. This will be useful in Section \ref{S_Lip} to compare the evolution of two points 
belonging to different level sets of $H$ and eventually to prove Theorem \ref{T_main}. 
The lower bounds on the geometrical and analytical mixing stated in Theorem \ref{T_mix} are obtained in Section \ref{S_mixing}.

\subsection*{Acknowledgements}
The authors acknowledge ERC Starting Grant 676675 FLIRT. Moreover the authors acknowledge Gianluca Crippa for proposing the problem and for a careful reading of the manuscript and Guido De Philippis for a preliminary discussion.
	
\section{Preliminaries and setting}\label{S_prel}
\subsection{Structure of $BV$ divergence free vector fields in $\R^2$}
	We consider a bounded vector field $b\in \BV(\R^2,\R^2)$ with compact support and $\div \, b=0$.  
	We recall a first structure theorem for $\BV$ vector fields from \cite{AFP_book}.
	\begin{theorem}
	Let $b \in \BV(\R^2,\R^2)$. Then there exists a partition of the plane $\R^2=S_b\cup J_b\cup D_b$ for which the following properties hold:
	\begin{enumerate}
	\item $\mathscr H^1(S_b)=0$;
	\item $J_b$ is contained in the union of the images of at most countably many Lipschitz curves and there exist Borel functions
	$\nu:J_b\to \mathbb{S}^1$, $b^-,b^+:J_b\to \R^d$ such that for every $\bar x\in J_b$
	\begin{equation*}
	\fint_{B_r^+(\bar x)}|b(x)-b^+(\bar x)|dx =o(1) \qquad \mbox{and} \qquad \fint_{B_r^-(\bar x)}|b(x)-b^-(\bar x)|dx =o(1) \qquad \mbox{as }r\to 0,
	\end{equation*}
	where $B^+_r(\bar x):=\{x\in B_r:(x-\bar x)\cdot \nu>0\}$ and $B^-_r(\bar x):=\{x\in B_r:(x-\bar x)\cdot \nu<0\}$;
	\item for every $\bar x\in D_b$ there exists $\tilde b(\bar x)$ such that
	\begin{equation*}
	\fint_{B_r(\bar x)}|b(x)-\tilde b(\bar x)|dx=o(1) \qquad \mbox{as }r\to 0.
	\end{equation*}
	\end{enumerate}
	\end{theorem}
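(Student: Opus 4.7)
The plan is to produce explicit candidates for $D_b$, $J_b$ and $S_b$ and then verify properties (1)--(3) using classical tools of geometric measure theory for $\BV$ functions.

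First, I would define $D_b$ as the set of approximate continuity points of $b$: the points $\bar x$ at which there exists $\tilde b(\bar x)\in \R^2$ with $\fint_{B_r(\bar x)}|b-\tilde b(\bar x)|\,dx\to 0$ as $r\to 0$. Property (3) then holds by definition, and Lebesgue's differentiation theorem applied componentwise to $b\in L^1_\loc(\R^2,\R^2)$ gives $\L^2(\R^2 \setminus D_b)=0$. Next I would let $J_b$ be the set of $\bar x \notin D_b$ at which there exist $\nu \in \mathbb{S}^1$ and distinct $b^\pm(\bar x)\in \R^2$ making property (2) hold. The nontrivial assertion is that $J_b$ lies in the image of countably many Lipschitz curves; this is the Federer--Volpert rectifiability theorem for jump sets of $\BV$ functions. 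I would derive it by applying De Giorgi's structure theorem to the superlevel sets of each component of $b$ (which have finite perimeter, since $b\in \BV$) and then invoking that the reduced boundary of a planar set of finite perimeter is countably $\H^1$-rectifiable, hence contained up to an $\H^1$-null set in the union of countably many Lipschitz curves.

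The remaining task is to show $\H^1(S_b)=0$ for $S_b:=\R^2 \setminus (D_b \cup J_b)$. I would partition $\R^2$ according to the upper $1$-density of the total variation measure $|Db|$ at $\bar x$: the set where $\limsup_{r\to 0}|Db|(B_r(\bar x))/r=+\infty$ is $\H^1$-negligible by a standard Besicovitch covering argument applied to the finite Radon measure $|Db|$. At points of finite upper density I would perform a blow-up: the rescaled functions $b(\bar x+r\cdot)$ have uniformly bounded $\BV$ norm on the unit ball, so by $\BV$ compactness a subsequence converges in $L^1_\loc$ to a limit $b_0$; a Poincar\'e--Wirtinger argument together with the one-dimensional slicing characterization of $\BV$ functions then forces $b_0$ to be either a constant (placing $\bar x$ in $D_b$) or a two-valued step function separated by a hyperplane (placing $\bar x$ in $J_b$).

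The main obstacle is the rectifiability of $J_b$, i.e.\ the Federer--Volpert theorem: this is where the full geometric measure theory machinery enters, through De Giorgi's structure theorem or equivalently Federer's criterion for $\H^1$-rectifiable sets. Once this is granted, the dichotomy at points of finite density and the $\H^1$-smallness of the high-density set reduce to routine covering and $\BV$ compactness arguments.
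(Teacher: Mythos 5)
The paper does not prove this theorem: it is stated as a recalled preliminary, cited directly from Ambrosio--Fusco--Pallara (\cite{AFP_book}, essentially Theorems 3.78 and 3.83 there), so there is no in-paper argument to compare against. Your outline is the textbook route from that reference: identify $D_b$ with approximate-continuity points (handled by Lebesgue differentiation), $J_b$ with approximate-jump points (handled by the coarea formula plus De Giorgi's structure theorem for sets of finite perimeter), and then argue $\H^1(S_b)=0$ via density bounds plus blow-up.

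As a sketch the architecture is right, but the last step is the crux and is where your write-up is too compressed. BV compactness only gives a \emph{subsequential} $L^1$-limit of the rescalings $b(\bar x + r\,\cdot\,)$ (after subtracting averages, which you should do to get uniform $\BV$ bounds), whereas membership in $D_b$ or $J_b$ is a statement about the \emph{full} limit $r\to 0$. Saying that Poincar\'e--Wirtinger and one-dimensional slicing ``force $b_0$ to be a constant or a two-valued step function'' glosses over both (i) why every subsequential limit at $\H^1$-a.e.\ finite-density point must have that rigid form, and (ii) why this forces the full blow-up to converge. This is exactly the content of the Federer--Volpert theorem and is not a routine covering/compactness exercise; in AFP it is handled by reducing through the coarea formula to indicator functions of finite-perimeter superlevel sets and invoking De Giorgi's theorem together with Besicovitch differentiation, rather than by a direct blow-up dichotomy. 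Also note that the superlevel sets $\{b_i>t\}$ have finite perimeter only for $\L^1$-a.e.\ $t$ (again by coarea), not for every $t$, so the statement that $J_b$ is contained in the union of their reduced boundaries needs to be phrased and justified with some care.
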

	In the following we will always assume that $b=\tilde b$ on $D_b$ and that $b=(b^++b^-)/2$ on $J_b$.
	
	Since $\div b=0$ and $b$ is bounded with compact support, there exists an Hamiltonian $H\in \Lip_c(\R^2)$ such that $b=\nabla^\perp H$. 
	In the following theorem from \cite{BKK_Sard} we describe the structure of the level sets of $H$. Let us first introduce some notation:
	by \emph{cycle} we mean a set which is homeomorphic to the unit circle $\mathbb{S}^1 \subset \R^2$. In the following we will consider parametrizations of cycles,
	i.e. $\gamma:[0,l]^*\to \R^2$, where $[0,l]^*$ denotes the set $[0,l]^*$, where the endpoints are identified.
	By $[s_1,s_2]\subset [0,l]^*$ we mean $[s_1,s_2]$ if $s_1\le s_2$ and $[s_1,l] \cup [0,s_2]$ if $s_1>s_2$.
	Accordingly we set
	\begin{equation*}
	\int_{s_1}^{s_2}f=\int_{[s_1,s_2]}f \quad\mbox{if }s_1\le s_2 \qquad \mbox{and}\qquad\int_{s_1}^{s_2}f=\int_{[0,s_2]\cup[s_1,l]}f \quad \mbox{if }s_2 < s_1.
	\end{equation*}
	
	\begin{theorem}\label{T_Bourgain}
	Let $H\in \Lip_c(\R^2)$ be such that $b:=\nabla^\perp H \in \BV(\R^2,\R^2)$.
	Then there exists $R\subset \R$ such that $\L^1(H(\R^2)\setminus R)=0$ and for every $r\in R$ the following properties hold: 
	\begin{enumerate}
	\item $\mathscr H^1(H^{-1}(h)) < \infty$;
	\item there exist $N=N(h)\in \N$ and $C_h^i$ disjoint cycles such that 
	\begin{equation*}
	H^{-1}(h)=\bigcup_{i=1}^N C_h^i.
	\end{equation*}
	Notice that by Point (1) we can parametrize the cycles by simple curves $\gamma_h^i:[0,l_h^i]^*\to \R^2$ such that 
	$|\dot\gamma_h^i(s)|=1$ for $\mathscr L^1$-a.e. $s \in [0,l_h^i]^*$ and $\gamma_h^i([0,l_h^i]^*)=C_h^i$. In the following we will consider parametrizations with these properties.
	\item for every $i=1,\ldots, N$ the \emph{integral curvature} of $C_h^i$ 
	\begin{equation*}
	\TV_{[0,l_h^i]^*}(\gamma_h^i)' <\infty.
	\end{equation*}
	By total variation for a function $v:[0,l]^*\to \R^2$ we mean
	\begin{equation*}
	\TV_{[0,l]^*}v=\sup_{0\le a_0<...<a_k\le l}\sum_{i=1}^k|v(a_{i+1})-v(a_i)|,
	\end{equation*}
	where we set $a_{k+1}=a_0$ and $(a_i)_{i=0}^k$ are Lebesgue points of $v$;
	\item $H^{-1}(h)\cap S_b=\emptyset$ and $H^{-1}(h)\cap J_b$ is at most countable; moreover for every $x\in H^{-1}(h)\cap J_b$
	\begin{equation*}
	b^+(x)\cdot \nu(x)\ne 0 \ne b^-(x)\cdot \nu(x).
	\end{equation*}
	\end{enumerate}
	Moreover for every $\e>0$ there exist $F_\e \subset H(\R^2)$ and $c_S>0$ such that $\mathscr L^1(H(\R^2)\setminus F_\e)<\e$ and for every $h\in F_\e$ the following additional
	condition is satisfied: 
	\begin{enumerate}
	\item[(5)] for every $x \in H^{-1}(h)\cap D_b$ it holds
	\begin{equation*}
	|b(x)|\ge c_S.
	\end{equation*}
	\end{enumerate}
	\end{theorem}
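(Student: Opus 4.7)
The plan is to combine the coarea formula for Lipschitz functions with the Sard-type theorem of \cite{BKK_Sard}, which is the deepest ingredient here. By the coarea formula
\[
\int_{\R^2}|\nabla H|\,dx = \int_\R \H^1(H^{-1}(h))\,dh,
\]
and since $H$ is Lipschitz and compactly supported, the right-hand side is finite, so $\H^1(H^{-1}(h))<\infty$ for $\L^1$-a.e.\ $h$, which is item (1). The Sard-type theorem of \cite{BKK_Sard} then asserts $\L^1(H(\{\nabla H=0\}))=0$, so after discarding a further null set of heights I may assume $|\nabla H|>0$ pointwise on $H^{-1}(h)$.

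For such a regular $h$, at every $x\in H^{-1}(h)$ the precise representative of $b=\nabla^\perp H$ is nonzero, so in a small rotated neighborhood of $x$ the function $H$ is strictly monotone along one coordinate direction. Hence $H^{-1}(h)$ is locally a Lipschitz graph; combined with compactness of $\supp H$ and $\H^1(H^{-1}(h))<\infty$, finitely many such charts cover the level set, which therefore splits into finitely many compact connected boundaryless $1$-manifolds in $\R^2$, i.e., Jordan curves, yielding (2) with the arclength parametrization.

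Item (3) is the most analytically delicate step. I would disintegrate the vector-valued measure $Db$ along the level-set foliation: by a slicing argument in the spirit of \cite{AFP_book}, for $\L^1$-a.e.\ $h$ the trace of $b$ along each cycle $\gamma_h^i$ is $BV$ with variation controlled by the trace of $|Db|$, and integrating over $i$ and $h$ recovers $|Db|(\R^2)$. Since the unit tangent $(\gamma_h^i)'$ is parallel to $b$ on the cycle, and $|b|$ is bounded away from $0$ on any cycle avoiding critical points and $S_b$, a renormalization transfers the $\BV$ bound from $b$ to $(\gamma_h^i)'$. For (4), $\H^1(S_b)=0$ combined with the Eilenberg inequality $\int \#(H^{-1}(h)\cap S_b)\,dh\le \Lip(H)\,\H^1(S_b)$ forces $H^{-1}(h)\cap S_b=\emptyset$ for a.e.\ $h$; countability of $H^{-1}(h)\cap J_b$ follows by writing $J_b$ as a countable union of Lipschitz curves $\sigma_k$ and applying a one-dimensional Sard argument to each Lipschitz map $H\circ\sigma_k$. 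The non-degeneracy $b^\pm\cdot\nu\ne 0$ would be obtained from a trace analysis on $J_b$: tangency of $b^\pm$ to $J_b$ at $\bar x$ would force $\nabla H$ to have a vanishing normal jump and hence a critical-type behavior of $H$ along $J_b$, which can occur only on a negligible set of heights.

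Finally, (5) is a quantitative refinement of the Sard statement. Setting $E_c:=\{x\in D_b:|b(x)|<c\}$, I would show that $\L^1(H(E_c))\to 0$ as $c\to 0^+$, the quantitative version of $\L^1(H(\{|\nabla H|=0\}))=0$ afforded by the $\BV$ regularity of $\nabla H$. Choosing $c_S$ so small that $\L^1(H(E_{c_S}))<\e$ and setting $F_\e:=R\setminus H(E_{c_S})$ then delivers (5). The principal obstacle throughout is precisely the Sard property for Lipschitz functions with $\BV$ gradient, together with its quantitative form: classical Whitney-type examples show the conclusion fails dramatically without the $\BV$ hypothesis, and both the topological conclusion (2) and the quantitative part (5) genuinely rely on \cite{BKK_Sard}. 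Once this Sard-type ingredient is granted, the remaining topological and measure-theoretic arguments are comparatively standard applications of slicing and $\BV$ trace theory.
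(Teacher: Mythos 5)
Note first that the paper does not prove Theorem~\ref{T_Bourgain}: it is quoted as a known result from \cite{BKK_Sard}, combined with the $\BV$ structure theorem of \cite{AFP_book} and the decomposition of \cite{BT_monotone}, so there is no internal proof to compare against. Your sketch is a plausible reconstruction of the chain of ideas, and you correctly identify the coarea formula as the source of (1), the Eilenberg inequality plus a one-dimensional coarea argument for (4), and the Bourgain--Korobkov--Kristensen Morse--Sard theorem as the deep ingredient behind (2) and (5). One correction in (4): what must vanish when $b^{\pm}\cdot\nu=0$ is the \emph{tangential} derivative of $H$ along $J_b$ (since $b\cdot\nu=-\partial_\tau H$ is single-valued on $J_b$), not a ``normal jump''; with that repaired, a one-dimensional Luzin--Sard argument for the Lipschitz maps $H\circ\sigma_k$ on the countably many Lipschitz pieces $\sigma_k$ covering $J_b$ does the job.

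There are, however, two places where the argument does not stand on its own. For item (2), knowing $|\tilde b(x)|>0$ at every point of $H^{-1}(h)$ does \emph{not} give local strict monotonicity of $H$ near $x$ when $\nabla H$ is merely $\BV$: the representative can oscillate arbitrarily near $x$ even if its precise value there is nonzero, so ``locally a Lipschitz graph'' is not a soft consequence of pointwise nonvanishing plus finite length. The decomposition of $H^{-1}(h)$ into finitely many disjoint Jordan curves is a theorem in its own right and is exactly the content of \cite{BKK_Sard}, not a corollary of the Sard property. For item (5), the reduction to $\L^1(H(E_c))\to 0$ as $c\to 0^+$ is the right formulation, but this does \emph{not} follow from the qualitative statement $\L^1(H(E_0))=0$ by continuity from above: since $\tilde b$ is not continuous, a sequence $x_{c_n}\in E_{c_n}$ with $x_{c_n}\to x_*$ need not satisfy $\tilde b(x_*)=0$, so $\bigcap_c H(E_c)$ can in principle strictly contain $H\bigl(\bigcap_c E_c\bigr)$; the quantitative Sard statement is genuinely stronger and again has to be taken from \cite{BKK_Sard}. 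Finally, for (3), the phrase ``a renormalization transfers the $\BV$ bound from $b$ to $(\gamma^i_h)'$'' hides the actual work, which is a slicing lemma in the spirit of the paper's Lemma~\ref{L_Dg} and Corollary~\ref{C_ABC}; as written this is a claim, not a proof.
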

	
	The following decomposition theorem has been obtained for Hamiltonians in $\BV(\R^2)$ with minor modifications in \cite{BT_monotone}. 
	Here we state it in the setting of Lipschitz Hamiltonians.
	\begin{theorem}\label{T_ABC}
	Let $H\in \Lip_c(\R^2)$. Then there exist $\{H_i\}_{i=1}^\infty$ with $H_i\in \Lip_c(\R^2)$ and a pairwise disjoint family $\{A_i\}_{i\in \N}$ with $A_i\subset \R^2$
	such that the following properties hold:
	\begin{enumerate}
	\item $H=\sum_{i=1}^\infty H_i$;
	\item the function $H_i$ is \emph{monotone} for every $i\in \N$, i.e. every level set of $H$ is connected;
	\item $\nabla H_i$ is concentrated on $A_i$ for every $i\in \N$.
	\end{enumerate}
	\end{theorem}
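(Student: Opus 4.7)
The plan is to adapt the $\BV$ decomposition of \cite{BT_monotone} to the Lipschitz setting, exploiting that $H\in \Lip_c(\R^2)$ is a fortiori in $\BV(\R^2)$, so the structural part of that argument applies. The central tool is a Reeb-type factorization: introduce on $\R^2$ the equivalence relation $x\sim y$ iff $H(x)=H(y)$ and $x,y$ belong to the same connected component of $H^{-1}(H(x))$, and set $T:=\R^2/\sim$ with the quotient topology, $\pi:\R^2\to T$ the projection, and $\bar H:T\to \R$ the induced map, so that $H=\bar H\circ \pi$. Continuity and compact support of $H$ make $T$ a compact, contractible, tree-like space with a distinguished root corresponding to the unbounded connected component of $\{H=0\}$.

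The key structural step is to decompose $T$ into countably many arcs $\{e_i\}_{i\in\N}$, along each of which $\bar H$ is strictly monotone, meeting only at branching vertices. This is the heart of \cite{BT_monotone} and is obtained via a dyadic exhaustion: for each $n\ge 1$ and $k\in \Z$, the sets $\{H>k/2^n\}$ and $\{H<k/2^n\}$ are open in $\R^2$ and thus have at most countably many connected components; the nested structure of these components over $(n,k)$ gives a countable forest whose direct limit recovers $T$ and produces the desired arc-vertex decomposition.

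For each edge $e_i$, write $[h_i^-,h_i^+]=\bar H(\bar e_i)$ with $h_i^-$ the value at the root-side endpoint, set $A_i:=\pi^{-1}(\inter e_i)\subset \R^2$, and let $\sigma_i\in\{\pm 1\}$ record whether $\bar H$ increases or decreases from the root along $e_i$. The family $\{A_i\}_{i\in\N}$ is pairwise disjoint. Define
\begin{equation*}
H_i(x):=\sigma_i\cdot\begin{cases} 0 & \text{if }\pi(x)\text{ lies in the component of }T\setminus \inter e_i\text{ containing the root},\\ H(x)-h_i^- & \text{if }\pi(x)\in \bar e_i,\\ h_i^+-h_i^- & \text{if }\pi(x)\text{ lies in the other component of }T\setminus \inter e_i. \end{cases}
\end{equation*}
Every non-constant level set of $H_i$ is the $\pi$-preimage of a single point of $\inter e_i$, hence is a connected component of some level set of $H$; the two saturated sets $\{H_i=0\}$ and $\{H_i=\sigma_i(h_i^+-h_i^-)\}$ are connected as saturated subtrees. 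Hence each $H_i$ is monotone; since $H_i$ is locally constant outside $\overline{A_i}$, its distributional gradient is concentrated on $A_i$; and telescoping along the unique arc in $T$ from the root to $\pi(x)$ yields $H(x)=\sum_i H_i(x)$ pointwise, the sum converging thanks to $\sum_i (h_i^+-h_i^-)\le \mathrm{osc}(H)<\infty$ along any root-to-leaf path.

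The Lipschitz regularity $\Lip(H_i)\le \Lip(H)$ follows because on $A_i$ one has $H_i=\sigma_i(H-h_i^-)$, while on the root-side boundary of $A_i$ one has $H=h_i^-$ (so $H_i$ extends continuously by $0$) and on the opposite boundary $H=h_i^+$ (so $H_i$ extends by the constant $\sigma_i(h_i^+-h_i^-)$); a short path-connecting argument then extends the estimate to arbitrary pairs of points, and compact support of $H_i$ is inherited from that of $H$. The main obstacle is the second step: producing the countable edge decomposition of $T$ rigorously and controlling the convergence of $\sum_i H_i$. These are the technical cores of \cite{BT_monotone} and transfer to our setting without change, since only continuity, boundedness and compact support of $H$ are used in that part of their construction.
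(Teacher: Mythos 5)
Your proposal reconstructs the Reeb-tree decomposition that underlies \cite{BT_monotone}, which is exactly the route the paper takes: the paper itself does not give a proof of Theorem \ref{T_ABC} but cites \cite{BT_monotone} and states the result in the Lipschitz setting, so in that sense you and the paper are on the same page. The quotient $T=\R^2/\sim$ (well-defined and a tree because $\R^2$ is simply connected and $H$ has compact support), the choice $A_i=\pi^{-1}(\inter e_i)$, and the truncation of $H$ to a single monotone arc are all correct elements, and your observations about connectedness of the level sets of $H_i$ (preimages of connected subsets of a tree under the Reeb quotient are connected) and about $\nabla H_i\rest A_i=\nabla H\rest A_i$ together with disjointness of the $A_i$ are sound.

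There is, however, a concrete sign error in the definition of $H_i$ that breaks the identity $H=\sum_iH_i$. With $\sigma_i:=\sign(h_i^+-h_i^-)$ and your definition $H_i=\sigma_i(H-h_i^-)$ on $\pi^{-1}(\bar e_i)$, each $H_i$ is nonnegative, so telescoping along the path $e_{j_1},\ldots,e_{j_m}$ from the root to $\pi(x)$ gives
\begin{equation*}
\sum_i H_i(x)=\sum_{k<m}\bigl|h_{j_k}^+-h_{j_k}^-\bigr|+\bigl|H(x)-h_{j_m}^-\bigr|,
\end{equation*}
which is the total variation of $\bar H$ along the path, not $H(x)$, unless $\bar H$ happens to be monotone along the whole path. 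The fix is simply to drop the $\sigma_i$ factor (or, equivalently, keep it and write $H=\sum_i\sigma_iH_i$): with $H_i:=H-h_i^-$ on $\pi^{-1}(\bar e_i)$ extended by $0$ on the root side and by $h_i^+-h_i^-$ on the leaf side, the telescoping uses $h_{j_1}^-=0$ and $h_{j_k}^+=h_{j_{k+1}}^-$ and gives $H(x)$ as required. Your accompanying bound $\sum_i(h_i^+-h_i^-)\le\mathrm{osc}(H)$ is then literally the telescoped partial sum (not an absolute-value bound), which is the honest statement; whether $\sum_i|h_i^+-h_i^-|$ is finite along a root-to-leaf branch is precisely one of the genuine technical points you are deferring to \cite{BT_monotone}. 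The Lipschitz estimate $\Lip(H_i)\le\Lip(H)$ is stated somewhat loosely but does hold: given $x,y$ with $H_i(x)\neq H_i(y)$, the segment $[x,y]$ must contain a point $p$ with $H(p)=h_i^-$ and/or a point $q$ with $H(q)=h_i^+$ (by continuity of $H_i$ and the intermediate value theorem), from which the estimate follows by comparing with $H$ on $[p,q]\subset[x,y]$; it would be worth spelling this out, since it is the one part of the statement that is not already in \cite{BT_monotone}.
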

	
	By Theorem \ref{T_Bourgain} and by inspection of the proof in \cite{BT_monotone} we get the following additional property.
	\begin{corollary}\label{C_ABC}
	Let $H\in \Lip_c(\R^2)$ be such that $b=\nabla^\perp H\in \BV(\R^2,\R^2)$. 
	Then there exists $R'\subset H(\R^2)$ such that $\L^1(H(\R^2)\setminus R')=0$ and for every $h\in R'$ Properties (1) - (4) in Theorem \ref{T_Bourgain} hold and 
	for every cycle $C^j_h\in H^{-1}(h)$ there exist unique $\bar i(j,h)\in \N$ and $\bar h_i(j,h)\in H_i(\R^2)$ such that $C^j_h=H_i^{-1}(\bar h_i(j,h))$ and 
	$b(x)  = \nabla^\perp H_i(x)$ for $\H^1$-a.e. $x\in C^j_h$ . In particular $R'$ and the sets $A_i$ in Theorem \ref{T_ABC} can be chosen in the following way:
	\begin{equation*}
	A_i:= H_i^{-1}\left( \bigcup_{h\in R', j=1,\ldots,N(h)}\{\bar h_i(j,h)\}\right).
	\end{equation*}
	\end{corollary}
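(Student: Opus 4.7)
The plan is to combine Theorem~\ref{T_Bourgain} with Theorem~\ref{T_ABC}, refining the set of regular levels so that the cycles of $H$ interact well with the monotone decomposition. I first set $R_1 := R \cap \bigcup_{n \in \N} F_{1/n}$, which still has full $\L^1$-measure in $H(\R^2)$; for every $h \in R_1$ properties (1)--(4) of Theorem~\ref{T_Bourgain} hold and, combining (4) with (5), one has $|b| > 0$ at $\H^1$-a.e.\ point of $H^{-1}(h)$. In parallel, I fix a monotone decomposition $H = \sum_i H_i$ with pairwise disjoint sets $A_i$ as given by Theorem~\ref{T_ABC}.

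The core computation is that every $H_i$ is constant along every cycle $C^j_h$ with $h \in R_1$. Parametrizing $C^j_h$ by the arc-length curve $\gamma^j_h$ of Theorem~\ref{T_Bourgain}(2), the chain rule for Lipschitz compositions gives
\begin{equation*}
(H_i \circ \gamma^j_h)'(s) = \nabla H_i(\gamma^j_h(s)) \cdot \dot\gamma^j_h(s) \qquad \text{for $\L^1$-a.e.\ } s.
\end{equation*}
Since $\dot\gamma^j_h$ is parallel to $b = \nabla^\perp H$, we have $\sum_i \dot\gamma^j_h \cdot \nabla H_i = 0$ almost everywhere. Disjointness of $\{A_i\}$ forces that at any $x \in \R^2$ at most one $\nabla H_i(x)$ is non-zero, so for $\H^1$-a.e.\ $x = \gamma^j_h(s)$ (where $b(x) \neq 0$) exactly one summand equals $\nabla H(x)$ and every other $\nabla H_k(x)$ vanishes pointwise. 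Hence $(H_i \circ \gamma^j_h)' = 0$ a.e.\ for every $i$, and $H_i$ takes a single value $\bar h_i(j,h)$ along $C^j_h$.

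Identifying the unique index $\bar i(j,h)$ is the main obstacle. The required identity $b = \nabla^\perp H_{\bar i}$ $\H^1$-a.e.\ on $C^j_h$ is equivalent to $\H^1(C^j_h \cap A_{\bar i}) = \H^1(C^j_h)$, and uniqueness of such an index is automatic from the pairwise disjointness of the $A_i$. Existence, however, does not follow from the three bare properties of Theorem~\ref{T_ABC} alone: one index must carry the entire $\H^1$-mass of the cycle. I would prove this by revisiting the construction of \cite{BT_monotone}, in which each $H_i$ is obtained from a specific branch of the tree of connected components of the level sets of $H$; after excising a further $\L^1$-null set of degenerate levels to obtain $R' \subset R_1$, every cycle lies on exactly one branch, and $\bar i(j,h)$ is the corresponding branch index.

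Once $\bar i$ is fixed, the inclusion $C^j_h \subset H_{\bar i}^{-1}(\bar h_{\bar i}(j,h))$ is immediate from the previous step; the reverse inclusion uses monotonicity of $H_{\bar i}$ (which forces the level set to be connected) together with the local smooth structure of the level set around any $x \in C^j_h$ with $\nabla H_{\bar i}(x) \neq 0$, implying that $C^j_h$ is both closed and relatively open in $H_{\bar i}^{-1}(\bar h_{\bar i}(j,h))$. Finally, the explicit form $A_i := H_i^{-1}\bigl(\bigcup_{h \in R',\, j=1,\ldots,N(h)} \{\bar h_i(j,h)\}\bigr)$ is consistent with the conclusion of Theorem~\ref{T_ABC}: the new sets remain pairwise disjoint (distinct level sets of the same $H_i$ are disjoint), and they carry the full gradient $\nabla H_i$ up to an $\L^2$-null set by the coarea formula applied to each $H_i$ combined with the cycle decomposition of $H$ on the full-measure set $R'$.
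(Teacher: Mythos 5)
The paper itself does not give a proof of Corollary~\ref{C_ABC}; it simply invokes ``inspection of the proof in \cite{BT_monotone}.'' Your proposal correctly identifies that the result hinges on the tree-of-level-sets construction behind Theorem~\ref{T_ABC}, and your identification of $\bar i(j,h)$ with a branch index is the right idea. In that sense you are on the same track as the paper.

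However, the parts of your argument that you \emph{do} try to make self-contained have genuine gaps, both of the same nature: you repeatedly transfer $\L^2$-a.e.\ statements to $\H^1$-a.e.\ statements on the cycle $C^j_h$, which is an $\L^2$-null set. Concretely: the pointwise chain rule $(H_i \circ \gamma^j_h)'(s) = \nabla H_i(\gamma^j_h(s)) \cdot \dot\gamma^j_h(s)$ requires $H_i$ to be differentiable at $\gamma^j_h(s)$ for $\L^1$-a.e.\ $s$, but Rademacher only gives differentiability $\L^2$-a.e., which says nothing on the curve. Likewise, that $\nabla H_i$ is concentrated on $A_i$ and the $A_i$ are pairwise disjoint is a statement modulo $\L^2$-null sets; it does not imply ``exactly one summand equals $\nabla H(x)$'' for $\H^1$-a.e.\ $x\in C^j_h$. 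The fact that the gradients $\nabla H_i$ have well-defined, mutually exclusive restrictions on a.e.\ level curve of $H$ is precisely what the branch structure of the \cite{BT_monotone} decomposition supplies---it cannot be derived from the three bare properties of Theorem~\ref{T_ABC}. The same issue recurs in your step proving $H_{\bar i}^{-1}(\bar h_{\bar i}(j,h)) \subset C^j_h$: appealing to ``local smooth structure of the level set'' around a point where $\nabla H_{\bar i}\neq 0$ is not available for a merely Lipschitz $H_{\bar i}$ and would have to be replaced by a BKK-type structure theorem for $H_{\bar i}$ (requiring $\nabla H_{\bar i}\in \BV$, itself not a free fact) or, again, by reading it off the tree construction. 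So both the constancy of $H_i$ along $C^j_h$ and the reverse inclusion should be deduced from the construction in \cite{BT_monotone}, not from the chain rule or from local regularity; your middle paragraph in particular should be removed or recast as a consequence of the branch structure rather than as an independent computation.
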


\subsection{The continuity equation and regular Lagrangian flows}
	Under the same assumptions on the vector field $b$ as in the previous section we introduce the notion of regular Lagrangian flow associated to $b$.
	\begin{definition}
	We say that $X:[0,+\infty)\times \R^2\to \R^2$ is a \emph{regular Lagrangian flow} of the vector field $b$ if
	\begin{enumerate}
	\item for $\mathscr L^2$- a.e. $x\in \R^2$ the map $t\mapsto X(t,x)$ is Lipschitz, $X(0,x)=x$ and for $\mathscr L^1$-a.e. $t>0$ it holds $\partial_tX(t,x)=b(X(t,x))$;
	\item for every $t\ge 0$ it holds
	\begin{equation*}
	X(t,\cdot)_\sharp \mathscr L^2 = \mathscr L^2.
	\end{equation*}
	\end{enumerate}
	\end{definition}
	It is well-known that this notion is strictly related to the continuity equation driven by $b$;
	we consider the Cauchy problem for $(t,x)\in [0,+\infty)\times \R^2$ with bounded initial datum $u_0$:
	\begin{equation}\label{E_CE_Cauchy}
	\begin{cases}
	\partial_t u +\div_x (ub)=0, \\
	u(0,\cdot)=u_0.
	\end{cases}
	\end{equation}
	
	Existence and uniqueness for regular Lagrangian flows associated to $b$ and for bounded weak solutions to \eqref{E_CE_Cauchy} hold in this setting: we refer to \cite{ABC1} for an
	exhaustive discussion about these questions for bounded divergence free vector fields in $\R^2$.
	\begin{theorem}\label{T_uniqueness}
	Let $b=\nabla^\perp H \in \BV(\R^2,\R^2)$ with $H\in \Lip_c(\R^2)$. Then there exists a regular Lagrangian flow $X$ of $b$ and for any other regular Lagrangian flow $\tilde X$
	of $b$ it holds $X(t,x)=\tilde X(t,x)$ for  $\L^2$-a.e. $x\in \R^2$ and every $t\ge 0$. 
	Moreover for $\L^2$-a.e. $x\in \R^2$ and every $t\ge 0$ it holds
	\begin{equation*}
	H(X(t,x))=H(x)
	\end{equation*}
	and for every $u_0\in L^\infty(\R^2)$ the unique bounded weak solution to \eqref{E_CE_Cauchy} is given by
	\begin{equation*}
	u(t)\L^2 = X(t)_\sharp \left(u_0\L^2\right).
	\end{equation*}
	\end{theorem}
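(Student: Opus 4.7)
The plan is to reduce the statement to Ambrosio's well-posedness theory \cite{Ambrosio_transport} for the continuity equation with $\BV$ drift, and then to extract the Hamiltonian conservation as a consequence of the resulting Lagrangian representation formula.

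First I would verify the hypotheses of Ambrosio's theorem. The field $b = \nabla^\perp H$ is bounded, of class $\BV$, compactly supported, and satisfies $\div b = 0$; in particular the distributional divergence is trivially absolutely continuous with respect to $\L^2$. Ambrosio's theorem then produces a regular Lagrangian flow $X$ of $b$, gives the uniqueness statement $X = \tilde X$ for $\L^2$-a.e. $x$ and every $t \ge 0$, and provides well-posedness of \eqref{E_CE_Cauchy} in the class of bounded weak solutions. Moreover $\div b = 0$ upgrades the compressibility bound $X(t)_\sharp \L^2 \le L\L^2$ to the measure-preserving identity $X(t)_\sharp \L^2 = \L^2$ required by Definition 2.2.

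Second I would establish the representation formula $u(t)\L^2 = X(t)_\sharp(u_0 \L^2)$ by a standard duality argument: for $u_0 \in L^\infty(\R^2)$ the measure $X(t)_\sharp(u_0 \L^2)$ is absolutely continuous with $L^\infty$ density (by incompressibility), and a direct distributional check based on the definition of the flow shows that this density is a weak solution of \eqref{E_CE_Cauchy} with initial datum $u_0$; uniqueness then identifies it with $u(t)$.

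Finally, the Hamiltonian conservation would follow by applying the representation formula to the time-independent choice $u_0 = H$. Since $H \in \Lip_c(\R^2)$ one has $Hb \in L^\infty$, the Leibniz rule holds distributionally, and the pointwise $\L^2$-a.e. identity
\begin{equation*}
\div(Hb) = \nabla H \cdot b + H\,\div b = \nabla H \cdot \nabla^\perp H = 0
\end{equation*}
shows that $H$ itself is a stationary bounded weak solution of \eqref{E_CE_Cauchy}. The representation formula then yields $X(t)_\sharp(H\L^2) = H\L^2$, and combining with $X(t)_\sharp \L^2 = \L^2$ via a change of variables produces $H(X(t,x)) = H(x)$ for $\L^2$-a.e. $x$ and $\L^1$-a.e. $t$. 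Upgrading to every $t \ge 0$ follows from the continuity of $H$ together with the Lipschitz regularity of $t \mapsto X(t,x)$ (valid for $\L^2$-a.e. $x$). The main obstacle is essentially bookkeeping, since the existence and uniqueness content is entirely supplied by \cite{Ambrosio_transport}; the only genuine verification is that $H$ acts as a stationary weak solution, which reduces to the distributional identity $\div(Hb) = 0$ and is immediate from $H \in W^{1,\infty}$.
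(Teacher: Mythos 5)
Your argument is correct, and it is a genuinely different route from the one the paper points to. The paper does not prove Theorem \ref{T_uniqueness}; it cites \cite{ABC1}, whose approach exploits the Hamiltonian structure directly: one shows that $\L^1$-a.e.\ level set of $H$ decomposes into finitely many rectifiable cycles (Theorem \ref{T_Bourgain}), reduces the continuity equation to a family of one-dimensional problems on these cycles via a coarea/disintegration argument, and establishes uniqueness cycle-by-cycle; the invariance $H(X(t,x))=H(x)$ comes out of this construction for free, since trajectories are built inside level sets. You instead invoke Ambrosio's general $\BV$ well-posedness theorem \cite{Ambrosio_transport} (which the paper explicitly notes covers this setting), pass to the Lagrangian representation formula by the standard duality computation, and then recover the Hamiltonian conservation by a clean trick: observe that $H$ itself is a stationary bounded weak solution, because the Leibniz rule for a Lipschitz factor against a $\BV$ field gives $\div(Hb)=\nabla H\cdot\nabla^\perp H + H\,\div b = 0$ distributionally, and then use uniqueness plus the measure-preserving property $X(t)_\sharp\L^2=\L^2$ (itself obtained from the case $u_0\equiv 1$) together with the change of variables $\int H(x)\phi(X(t,x))\,dx=\int H(X(t,x))\phi(X(t,x))\,dx$ to conclude $H\circ X(t)=H$ a.e.; the upgrade from a.e.\ $t$ to every $t$ by continuity of $H$ and Lipschitz-in-$t$ regularity of the flow is the right closing step. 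What your approach buys is brevity and no need for the level-set structure theorem at this stage; what the \cite{ABC1} route buys is a constructive description of the flow along cycles, which is precisely what the rest of the paper (Remark \ref{R_representative}, Sections \ref{S_cov}--\ref{S_Lip}) then relies on — so from the standpoint of the paper as a whole, the ABC viewpoint is not merely an alternative proof of this theorem but infrastructure that gets reused. Both are valid; there is no gap in your argument.
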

	
	In the following remark we fix a representative of the regular Lagrangian flow $X$ of $b$ for future references.
	\begin{remark}\label{R_representative}
	From Theorem \ref{T_uniqueness} and Theorem \ref{T_Bourgain} it follows that there exists a representative of the flow $\tilde X$ such that
	for every $h\in R$ and \emph{every} $x\in C^j_h$ for some cycle $C^j_h \in H^{-1}(h)$ the flow $t\mapsto\tilde X(t,x)$ is the unique characteristic curve of $b$ with image
	contained in $C^j_h$.
	It is not hard to check that if $C^j_h$ is parametrized by $\gamma^j_h$ as in Theorem \ref{T_Bourgain}, then $\tilde X(t,x)=\gamma_h^j(\bar s)$, where
	$\bar s\in [0,l_{\gamma^j_h}]^*$ is uniquely determined by
	\begin{equation*}
	t-\int_{(\gamma^j_h)^{-1}(x)}^{\bar s}\frac{1}{|b(\gamma^j_h(s))|}ds = k\int_0^{l_{\gamma^j_h}}\frac{1}{|b(\gamma^j_h(s))|}ds,
	\end{equation*}
	for some $k\in \N$.
	\end{remark}

\subsection{Closed curves with finite curvature in $\R^2$}
	In this section we consider cycles with finite length and parametrized by one-to-one curves $\gamma:[0,l_\gamma]^*\to \R^2$ with 
	$|\gamma'|(s)=1$ for $\L^1$-a.e. $s \in [0,l_\gamma]^*$ and $\TV_{[0,l_\gamma]^*}\gamma'< \infty$. We refer to these $\gamma$ as \emph{simple curves with finite turn}.
				
	\begin{lemma}\label{L_Lip_inverse}
	Let $\gamma:[0,l_\gamma]^*\to \R^2$ be a simple curve as above without cusps, i.e. there exist no $\bar s \in [0,l_\gamma]^*$ such that
	\begin{equation*}
	\lim_{s\to \bar s^-}\gamma'(s)= - \lim_{s\to \bar s^+}\gamma'(s).
	\end{equation*}
	Then the inverse of $\gamma$ defined on $\gamma([0,l_\gamma]^*)\subset \R^2$ with values in $[0,l_\gamma]^*$ is Lipschitz.
	\end{lemma}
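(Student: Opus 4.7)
The plan is to argue by contradiction. Suppose $\gamma^{-1}$ is not Lipschitz on $\gamma([0,l_\gamma]^*)$; then I would produce sequences $s_n \ne t_n$ in $[0,l_\gamma]^*$ such that, denoting by $d(s,t)$ the circular distance on $[0,l_\gamma]^*$, the ratio $d(s_n,t_n)/|\gamma(s_n) - \gamma(t_n)|$ tends to $+\infty$. By compactness I would extract convergent subsequences $s_n \to s^*$ and $t_n \to t^*$; injectivity of $\gamma$ on the cycle forces $|\gamma(s^*) - \gamma(t^*)| > 0$ whenever $s^* \ne t^*$, which would keep the ratio bounded, so necessarily $s^* = t^*$. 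Working in a small interval around $s^*$, I may replace $d(s_n,t_n)$ by the usual $|s_n - t_n|$.

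Next I would exploit that $\gamma'$ has finite total variation on $[0,l_\gamma]^*$ to introduce the one-sided limits
$$v^+ := \lim_{s \to s^{*+}} \gamma'(s), \qquad v^- := \lim_{s \to s^{*-}} \gamma'(s),$$
which exist and satisfy $|v^\pm| = 1$. The no-cusp hypothesis is precisely $v^+ \ne -v^-$. Writing $\gamma(s) - \gamma(s^*) = \int_{s^*}^s \gamma'(r)\,dr$ and invoking dominated convergence, I would obtain the one-sided first-order expansions
$$\gamma(s) - \gamma(s^*) = (s-s^*)\,v^{\pm} + o(|s-s^*|) \qquad \text{as } s \to s^{*\pm}.$$

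I would then distinguish two cases. If $s_n, t_n$ lie on the same side of $s^*$, subtracting the two expansions would give $|\gamma(s_n) - \gamma(t_n)| = (1+o(1))|s_n - t_n|$, contradicting the divergence of the ratio. If they lie on opposite sides, I set $\alpha_n := |s_n - s^*|$ and $\beta_n := |t_n - s^*|$; combining the two expansions yields
$$|\gamma(s_n) - \gamma(t_n)| = |\alpha_n v^+ + \beta_n v^-| + o(\alpha_n + \beta_n).$$
I would then invoke the elementary estimate $|\alpha v^+ + \beta v^-| \ge c(\alpha+\beta)$ valid for all $\alpha,\beta \ge 0$, with $c := \sqrt{(1+\langle v^+, v^-\rangle)/2} > 0$ granted by the no-cusp condition, to conclude $|\gamma(s_n) - \gamma(t_n)| \ge (c + o(1))|s_n - t_n|$, again a contradiction.

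The main obstacle I foresee is the opposite-side case: one must prevent the two one-sided increments from nearly cancelling. This is exactly where the no-cusp hypothesis enters quantitatively, through the constant $c>0$ above, which degenerates to zero precisely when $v^+ = -v^-$. Everything else amounts to standard BV manipulations and compactness.
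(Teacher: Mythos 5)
Your argument is correct, but it takes a genuinely different route from the paper. The paper argues directly: writing $\mu_\gamma$ for the total variation measure of $\gamma''$, it observes that the no-cusp hypothesis means $\mu_\gamma$ has no atom of full size $2$, applies the auxiliary Lemma~\ref{lemma:measure} to obtain a \emph{uniform} $\delta>0$ so that $\mu_\gamma([s_1,s_2])<2-\e/2$ on every interval of length $\delta$, and then notes that such a small-turn arc is a Lipschitz graph over some direction $\xi\in\mathbb S^1$ (the ``elementary fact''), which gives a local Lipschitz bound with constants independent of the base point; compactness and injectivity then upgrade local to global. You instead argue by contradiction and compactness: a failure of Lipschitz invertibility is localized at a single accumulation point $s^*$, at which the one-sided limits $v^\pm$ of $\gamma'$ exist because $\gamma'\in\BV$, and the no-cusp condition $v^+\neq-v^-$ yields the lower bound $|\alpha v^++\beta v^-|\ge c(\alpha+\beta)$ with $c=\sqrt{(1+\langle v^+,v^-\rangle)/2}>0$. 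Both proofs exploit the same geometric phenomenon (tangents cannot spread over a full half-circle near any point), but the paper's version is quantitative and produces a local Lipschitz constant $c_\e$ that enters later estimates via the admissibility parameter $L$, whereas your compactness argument is softer and does not give an explicit constant.

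One point deserves tightening. In the same-side case you obtain $\gamma(s_n)-\gamma(t_n)=(s_n-t_n)v^++o(|s_n-s^*|)+o(|t_n-s^*|)$ by subtracting the two one-sided expansions at $s^*$; but $o(|s_n-s^*|)$ and $o(|t_n-s^*|)$ need not be $o(|s_n-t_n|)$ when $|s_n-t_n|\ll |s_n-s^*|$, so the asserted $(1+o(1))|s_n-t_n|$ does not literally follow from that subtraction. The clean route is to write $\gamma(s_n)-\gamma(t_n)=\int_{t_n}^{s_n}\gamma'(r)\,dr$ and use that $\gamma'(r)\to v^+$ uniformly for $r$ in the (shrinking) interval between $t_n$ and $s_n$, giving $\gamma(s_n)-\gamma(t_n)=(s_n-t_n)(v^++o(1))$ directly. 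The conclusion you want is thus correct, and the opposite-side case is fine as written since there $\alpha_n+\beta_n=|s_n-t_n|$; it is only the stated derivation in the same-side case that needs this small repair.
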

	\begin{proof}
	Denote by $\mu_\gamma$ the total variation of the measure $\gamma''$:
	since $\gamma$ has no cusps there exists $\e\in (0,2]$ such that
	\begin{equation*} 
	2-\e:=\max_{s\in [0,l_\gamma]^*} \mu_\gamma(\{s\}).
	\end{equation*}
	By Lemma \ref{lemma:measure} there exists $\delta>0$ such that for any $s_1<s_2$ with $s_2-s_1\le \delta$ it holds 
	\begin{equation}\label{E_curv_2}
	\mu_\gamma([s_1,s_2])<2-\frac{\e}{2}.
	\end{equation}
	It is an elementary fact that there exists $c_\e>0$ such that for any $s_1,s_2 \in [0,l_\gamma]^*$ for which \eqref{E_curv_2} holds there exists $\xi\in S^1$ such that
	\begin{equation*}
	\dot\gamma(s)\cdot \xi>c_\e
	\end{equation*}
	for $\L^1$-a.e. $s\in [s_1,s_2]$. This immediately implies that for any $s,s' \in[s_1,s_2]$ we have
	\begin{equation*}
	|\gamma(s)-\gamma(s')|\ge |(\gamma(s)-\gamma(s'))\cdot \xi|\ge c_\e|s-s'|.
	\end{equation*} 
	This proves that there exist $\delta>0$ and $c_\e>0$ depending only on $\gamma$ such that for every $s,s'\in [0,l_\gamma]^*$ with $|s-s'|\le \delta$ it holds 
	$|\gamma(s)-\gamma(s')|\ge c_\e|s-s'|$.
	Since $\gamma$ is injective and $[0,l_\gamma]^*$ is compact this uniform local Lipschitz invertibility implies global Lipschitz invertibility.
	\end{proof}

		\begin{lemma}\label{lemma:measure}
		Let $\mu$ be a finite, non-negative measure on $[0,1]$ and let $a := \max_{x \in [0,1]} \mu(\{x\})$. Then for every $b>a$ we can find $\delta=\delta(b)$ such that $\mu(I_\delta) < b$ for every subinterval $I_\delta \subset [0,1]$ of length $2\delta$. 
	\end{lemma}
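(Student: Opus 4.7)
The plan is to argue by contradiction, reducing the global statement to a local one at a single point by compactness, and then exploiting continuity of the finite measure $\mu$ from above to extract a contradictory atom.

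More precisely, suppose the conclusion fails for some $b>a$. Then for every $n \in \N$ there exist a center $x_n\in[0,1]$ and a radius $\delta_n\le 1/n$ such that $\mu(I_n)\ge b$, where $I_n:=[x_n-\delta_n,x_n+\delta_n]\cap[0,1]$. By compactness of $[0,1]$ I pass to a subsequence (not relabeled) along which $x_n\to x^*\in[0,1]$. Since $\delta_n\to 0$ and $x_n\to x^*$, for every $\varepsilon>0$ one has $I_n\subset[x^*-\varepsilon,x^*+\varepsilon]\cap[0,1]$ for all sufficiently large $n$, and hence
\begin{equation*}
\mu\bigl([x^*-\varepsilon,x^*+\varepsilon]\cap[0,1]\bigr)\ge \mu(I_n)\ge b.
\end{equation*}

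Now I use that $\mu$ is finite, which gives continuity from above along the decreasing family of closed intervals shrinking to $\{x^*\}$:
\begin{equation*}
\mu(\{x^*\})=\lim_{\varepsilon\to 0^+}\mu\bigl([x^*-\varepsilon,x^*+\varepsilon]\cap[0,1]\bigr)\ge b>a.
\end{equation*}
This contradicts the definition of $a$ as the maximum mass of a singleton. To make sure this maximum is actually attained (so that the hypothesis $b>a$ is meaningful), I observe that since $\mu$ is finite, for every $\eta>0$ the set of atoms of mass $\ge\eta$ is finite, so the supremum $\sup_x\mu(\{x\})$ is either zero or realized at one of these finitely many atoms.

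The argument is essentially a routine compactness plus continuity-from-above exercise; the only point requiring a moment of care is the justification that $\max_{x\in[0,1]}\mu(\{x\})$ is well defined, which is handled by the finiteness of $\mu$. I do not expect any genuine obstacle.
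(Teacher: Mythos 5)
Your proof is correct, and it takes a genuinely different route from the paper's. You argue by contradiction: if no uniform $\delta$ works, you extract a sequence of shrinking intervals with $\mu$-mass at least $b$, use Bolzano--Weierstrass to find a limit center $x^*$, and then apply continuity from above of the finite measure $\mu$ along nested neighborhoods of $x^*$ to produce an atom of mass at least $b$, contradicting the definition of $a$. The paper instead gives a direct, constructive argument: it splits $\mu = \mu^a + \tilde\mu$ into its purely atomic and atomless parts, truncates the atomic part to a finite set $F$ capturing all but mass $(b-a)/2$, uses uniform continuity of $x\mapsto\tilde\mu([0,x])$ to control the atomless part on short intervals, and then chooses $\delta$ smaller than both the resulting modulus and half the minimal gap between points of $F$ so that a short interval can meet at most one significant atom. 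The two approaches buy different things: yours is shorter and cleaner, relying only on soft tools (compactness and $\sigma$-additivity), but is inherently non-constructive; the paper's is longer but produces $\delta$ explicitly from data of $\mu$ (the truncation threshold, the gap $r$, the modulus of continuity of the atomless CDF), which fits the quantitative spirit of the surrounding estimates even though the explicit form is never used downstream. Your closing remark about the maximum being attained is also correct and worth including, since the finiteness of $\mu$ forces atomic masses to tend to zero, so the supremum is either $0$ or achieved.
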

	
	\begin{proof}
		The claim is trivial if $a=0$: indeed, in this case consider the function 
		\begin{equation*}
		f(x) := \int_0^x d\mu(y) = \mu([0,x])
		\end{equation*}
		which is uniformly continuous on $[0,1]$: therefore for any $b>0$ we can find $\delta>0$ such that 
		\begin{equation}\label{eq:unif_cont}
		\forall x,y \in [0,1]: \, x<y \text{ and } |x-y|< \delta \Rightarrow |f(x)-f(y)| = \mu([x,y])< b. 
		\end{equation}
		In case the measure has atoms, i.e. $a>0$, let us write $\mu = \mu^a + \tilde{\mu}$ where $\tilde{\mu}$ has no atoms and $\mu^a$ is the purely atomic measure given by 
		\begin{equation*}
		\mu^a := \sum_{n = 0}^\infty a_n \delta_{x_n}, \qquad (x_n)_{n \in \N} \subset [0,1], \qquad \sum_{n=0}^\infty a_n <+ \infty. 
		\end{equation*}
		Due to the convergence of the series defining $\mu^a$ (which is in turn a consequence of the finiteness of the measure $\mu$), we infer that there exists a finite set $F \subset [0,1]$ such that $\mu^a([0,1] \setminus F) < \frac{b-a}{2}$. 
		Denoting the elements of $F$ by $y_1 \le \ldots \le y_J$, we define $r := \min_{i=1, \ldots, J-1} |y_{i+1} - y_i|$. 
		Furthermore, applying the argument above to the atom-less measure $\tilde{\mu}$, we get from \eqref{eq:unif_cont} that there exists $\delta'=\delta'\left( \frac{b-a}{2}\right)$ such that $\tilde{\mu}(I_\delta')<\frac{b-a}{2}$ for every interval of length $2\delta'$.  
		If we now choose $\delta= \min \{\delta', r/2\}$, we can estimate
		\begin{equation*}
		\begin{split}
		\mu(I_{\delta}) & = \mu^a(I_{\delta}) + \tilde{\mu}(I_{\delta}) \\
		& \le \mu^a(I_{\delta} \cap F) + \mu^a(I_{\delta} \setminus F) + \frac{b-a}{2} \\
		& \le a +  \mu^a([0,1] \setminus F) + \frac{b-a}{2} \\
		& \le a + \frac{b-a}{2} + \frac{b-a}{2} = b
		\end{split}
		\end{equation*} 
		for every interval $I_{\delta}$ of length less than $\delta$. 
		Notice that in the third line we have used the fact that in $I_{\delta} \cap F$ there is at most one atom of $\mu$ (since $\delta\le r/2$). This completes the proof. 
		\end{proof}

	For future references we introduce the following class of simple curves:
	\begin{definition}
	We say that a bi-Lipschitz curve $\gamma:[0,l_\gamma]^*\to \gamma([0,l_\gamma]^*)\subset \R^2$ such that $|\gamma'|(s)=1$ for $\L^1$-a.e. $s\in [0,l_\gamma]^*$
	is \emph{$(c_S,M,L)$-admissible} if 
	\begin{equation*}
	\inf_{[0,l_\gamma]^*}|b\circ\gamma|\ge c_S, \qquad \TV \gamma' \le M \qquad \mbox{and} \qquad \Lip(\gamma^{-1})\le L.
	\end{equation*}
	\end{definition}

	We recall the well-known Jordan theorem on simple curve in $\R^2$.
	\begin{theorem}\label{T_Jordan}
	Let $C$ be a cycle in $\R^2$. Then $\R^2\setminus C$ has two connected components and one of them is bounded. 
	We denote by $\Int (C)$ the bounded connected component of $\R^2\setminus C$. 
	
	If moreover $C$ can be parametrized by a Lipschitz $\gamma:[0,l_\gamma]^*\to C$ with unit
	speed and such that $\TV_{[0,l_\gamma]^*}\gamma'< \infty$, then there exists $E\subset [0,l_\gamma]^*$ at most countable such that for every $s\in [0,l_\gamma]^*\setminus  E$
	and every $v\in \mathbb S^1$ such that $v\cdot (\gamma'(s))^\perp>0$ there exists $t>0$:
	\begin{equation*}
	\gamma(s)-\deg(\gamma)t'v\in \Int (C) \qquad \mbox{for every }t'\in (0,t),
	\end{equation*}
	where $\deg(\gamma)$ denotes the topological degree of the curve $\gamma$ with respect to a point $\bar x \in \Int(C)$.
	\end{theorem}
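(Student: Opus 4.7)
The first assertion is the classical Jordan Curve Theorem, so I would simply invoke it without further comment.

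For the local statement, my plan is to start by defining $E$ as the set of jump points of the (precise representative of the) function $\gamma'$. Since $\gamma'\in\BV([0,l_\gamma]^*;\R^2)$, the distributional derivative $\gamma''$ has at most countably many atoms, and the jump set of $\gamma'$ is precisely the set of these atoms, so $E$ is at most countable. At each $s\in[0,l_\gamma]^*\setminus E$ the one-sided limits of $\gamma'$ both equal $\gamma'(s)$, and combining this with the fundamental theorem of calculus yields the classical tangent expansion
\begin{equation*}
\gamma(s+h)=\gamma(s)+h\,\gamma'(s)+o(h)\qquad\text{as } h\to 0,
\end{equation*}
so $\gamma$ has an honest tangent line spanned by $\gamma'(s)$ at $s$.

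Next I would isolate a neighborhood of $\gamma(s)$ which meets $C$ only along a single short sub-arc. Because $\gamma$ is continuous and one-to-one on the compact set $[0,l_\gamma]^*$, a standard compactness argument produces $\rho>0$ and $\sigma>0$ such that $B_\rho(\gamma(s))\cap C=\gamma((s-\sigma,s+\sigma))$: if not, one extracts a sequence of parameters bounded away from $s$ whose images converge to $\gamma(s)$, contradicting injectivity in the limit. After rotating coordinates so that $\gamma'(s)=e_1$, the tangent expansion above ensures that, after possibly shrinking $\rho$, this sub-arc is a Lipschitz graph over the $e_1$-axis lying in a thin horizontal strip. In particular $B_\rho(\gamma(s))\setminus C$ has exactly two connected components, distinguished by the sign of $(p-\gamma(s))\cdot(\gamma'(s))^\perp$. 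Each such open piece lies in a single connected component of $\R^2\setminus C$, and the two pieces must lie in different components because any continuous path joining them inside $B_\rho$ must cross the graph.

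The last and main step is to match the side of the graph with $\Int(C)$. For this I would use the winding-number characterization of the topological degree: the function $p\mapsto w(\gamma,p)$ is integer-valued and locally constant on $\R^2\setminus C$, vanishes on the unbounded component and equals $\deg(\gamma)\in\{-1,+1\}$ on $\Int(C)$. For $v\in\mathbb{S}^1$ with $v\cdot(\gamma'(s))^\perp>0$, homotoping the piece of $\gamma$ lying in $B_\rho(\gamma(s))$ to its tangent segment at $\gamma(s)$ (which is allowed because the homotopy stays away from both $\gamma(s)+tv$ and $\gamma(s)-tv$ for $t$ small by the tangent expansion) reduces the computation to the winding-number jump across a straight segment, where a direct calculation gives
\begin{equation*}
w\bigl(\gamma,\gamma(s)+tv\bigr)-w\bigl(\gamma,\gamma(s)-tv\bigr)=-\sign\bigl(v\cdot(\gamma'(s))^\perp\bigr)\cdot\deg(\gamma)
\end{equation*}
(with the paper's orientation conventions). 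Hence the point $\gamma(s)-\deg(\gamma)tv$ sits on the side where $w(\gamma,\cdot)=\deg(\gamma)$, namely $\Int(C)$, for every sufficiently small $t>0$. The main obstacle here is the winding-number jump computation at a merely $\BV$-differentiable point, but the continuity of $\gamma'$ at $s\notin E$ and the tangent expansion make the homotopy argument to the straight segment routine.
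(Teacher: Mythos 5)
The paper does not prove Theorem~\ref{T_Jordan}; it is stated and used (in Lemma~\ref{L_folding}) as a classical refinement of the Jordan curve theorem. So there is no in-paper argument to compare against, and I will review your proposal on its own merits. Your overall strategy is the standard and correct one: take $E$ to be the (at most countable) set of atoms of the finite measure $\gamma''$, deduce the first-order tangent expansion $\gamma(s+h)=\gamma(s)+h\gamma'(s)+o(h)$ at every $s\notin E$, use injectivity and compactness of $[0,l_\gamma]^*$ to isolate a short sub-arc that is a Lipschitz graph inside a small ball, and then determine which side of the graph is $\Int(C)$ via the winding number.

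There is, however, a concrete sign error in the winding-number step. The jump $w(\gamma,\gamma(s)+tv)-w(\gamma,\gamma(s)-tv)$ across the curve at a tangency point depends only on $\gamma'(s)$ and $v$ (and the convention for $\perp$); it equals $\pm\sign(v\cdot(\gamma'(s))^\perp)$ and does \emph{not} carry a factor of $\deg(\gamma)$. (Reversing the orientation flips $w$, flips $\gamma'$, and flips $(\gamma')^\perp$, so the formula is orientation-independent.) The degree enters only afterwards, through the fact that $w\equiv 0$ on the unbounded component and $w\equiv\deg(\gamma)$ on $\Int(C)$. If one takes your formula literally in the case $\deg(\gamma)=-1$ and $v\cdot(\gamma'(s))^\perp>0$, it gives jump $+1$, hence $w(\gamma(s)+tv)=0$ and $w(\gamma(s)-tv)=-1$ (the two values must be $0$ and $-1$), so $\gamma(s)-\deg(\gamma)tv=\gamma(s)+tv$ would lie in the \emph{unbounded} component — contradicting the theorem you are trying to prove. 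Dropping the spurious $\deg(\gamma)$ factor and then using $\{w(\gamma(s)+tv),w(\gamma(s)-tv)\}=\{0,\deg(\gamma)\}$ repairs this. Two smaller remarks: the claimed equality $B_\rho(\gamma(s))\cap C=\gamma((s-\sigma,s+\sigma))$ should be an inclusion $\subset$ (choose $\sigma$ first, using continuity of $\gamma'$ at $s\notin E$, then $\rho$ by the compactness/injectivity argument); and the sentence asserting that the two local pieces lie in different components of $\R^2\setminus C$ because a path ``inside $B_\rho$'' must cross the graph does not actually prove the claim (a connecting path need not stay in $B_\rho$) — but this is exactly what the winding-number computation delivers, so that sentence is redundant rather than harmful.
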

	In this work we will only consider the topological degree of simple curves in $\R^2$, also known as winding number: 
	roughly speaking it is equal to 1 if the curve turns around an interior point counterclockwise and $-1$ if it does it clockwise.
	
\section{Change of variables}\label{S_cov}
	In the following proposition we introduce a change of variables between two level sets and we summarize the properties that we need in Section \ref{S_Lip} in order to compare the
	evolution of two trajectories of the flow.
	\begin{proposition}\label{P_cov}
		Let $h_1,h_2 \in R$ with $h_2<h_1$ and $\gamma_1=\gamma^i_{h_1},\gamma_2=\gamma^{i'}_{h_2}$ be defined in Theorem \ref{T_Bourgain}. Assume that 
		$\gamma_1,\gamma_2$ are $(c_S,M,L)$-admissible curve and
		\begin{equation}\label{E_incl_int}
		\Int(C^{j_1}_{h_1})\subset \Int(C^{j_2}_{h_2}).
		\end{equation}
		We denote by
		\begin{equation*}
		A:= \Int(C^{j_2}_{h_2}) \setminus \overline{\Int(C^{j_1}_{h_1})}
		\end{equation*}
		the open region between the two curves and we assume that $\deg(\gamma_1)=-1= \deg(\gamma_2)$ and $h_1>h_2$.
		Then there exist $D_1\subset [0,l_{1}]^*$ and $D_2\subset [0,l_{2}]^*$ enjoying the following properties:
		\begin{enumerate}
			\item there exist $N\in \N$ and  two families of pairwise disjoint intervals $I_{1,j}:=[s_{1,j}^-,s_{1,j}^+]\subset D_1$ and  $I_{2,j}:=[s_{2,j}^-,s_{2,j}^+]\subset D_2$
				for $j=1,\ldots,N$ such that 
				\begin{equation*} 
				\bigcup_{j=1}^NI_{1,j}=D_1 \qquad \mbox{and} \qquad \bigcup_{j=1}^NI_{2,j}=D_2.
				\end{equation*}
			\item There exist two constants $\tilde c_1=\tilde c_1(c_S,M,L)>0$ and $\tilde c_2=\tilde c_2(c_S,M)>0$ such that
				\begin{equation}\label{E_est_h_1}
				l_{1}-|D_1|\le \tilde c_1|h_1-h_2| +\tilde c_2 |Db|(A) \qquad \mbox{and} \qquad 	l_2-|D_2|\le \tilde c_1|h_1-h_2| +\tilde c_2 |Db|(A).
				\end{equation}
			\item for every $j=1,\ldots,N$ there exists $e_j\in S^1$ such that for $\L^1$-a.e. $s\in I_{1,j}$ it holds 
				\begin{equation*}
				\dot\gamma_1(s)\cdot e_j \ge \frac{\sqrt 2}{2}|\dot\gamma_1(s)|
				\end{equation*}
				and for $\mathcal L^1$-a.e. $s\in I_{2,j}$ it holds 
				\begin{equation*}
				\dot\gamma_2(s)\cdot e_j \ge \frac{\sqrt 2}{2}|\dot\gamma_2(s)|.
				\end{equation*}
		\end{enumerate}
		In particular $\gamma_1\llcorner I_{1,j}$ and $\gamma_2\llcorner I_{2,j}$ are Lipschitz graphs in the same coordinate system: 
		more precisely there exist two bilipschitz functions $Y_{1,j}:I_{1,j}\to [0,l_j], \,Y_{2,j}:I_{2,j}\to [0,l_j]$ and two 1-Lipschitz functions $f_{1,j},f_{2,j}:[0,l_j]\to \R$ such that 
		\begin{equation}\label{E_Yf1}
		\gamma_1(s)=\gamma_1(s_{1,j}^-)+Y_{1,j}(s)e_j+f_{1,j}(Y_{1,j}(s))n_j \qquad \forall s\in I_{1,j}
		\end{equation}
		and 
		\begin{equation}\label{E_Yf2}
		\gamma_2(s)=\gamma_1(s_{1,j}^-)+Y_{2,j}(s)e_j+f_{2,j}(Y_{2,j}(s))n_j \qquad \forall s\in I_{2,j},
		\end{equation}
		where $n_j=e_j^\perp$. Moreover for $\L^1$-a.e. $s_1\in I_{1,j}$, $s_2\in I_{2,j}$ it holds
		\begin{equation}\label{E_Y_{1,j}}
		Y_{1,j}'(s_1)=\frac{b(\gamma_1(s_1))\cdot e_j}{|b(\gamma_1(s_1))|} \qquad \mbox{and} \qquad 
		Y_{2,j}'(s_2)=\frac{b(\gamma_2(s_2))\cdot e_j}{|b(\gamma_2(s_2))|}.
		\end{equation}
		
		Let $X_{1,2}:D_1\to D_2$ be the bi-Lipschitz change of variables defined by 
		\begin{equation*}
		X_{1,2}(s)=Y_{2,j}^{-1}(Y_{1,j}(s)) \qquad \forall s \in I_{1,j}
		\end{equation*}
		as $j$ goes from 1 to $N$.
		\begin{enumerate}
			\item[(4)] for any $j=1,\ldots,N$ and any $s\in I_{1,j}$ it holds
				\begin{equation*}
				0<d(s):=f_{2,j}(Y_{2,j}(X_{1,2}(s)))-f_{1,j}(Y_{i,j}(s))\le \frac{2\sqrt 2}{c_S}|h_1-h_2|,
				\end{equation*}
				i.e.
				\begin{equation}\label{E_max_dist}
				\sup_{D} |\gamma_1-\gamma_2\circ X_{1,2}|\le  \frac{2\sqrt 2}{c_S}|h_1-h_2|.
				\end{equation}
			\item[(5)]  for any $j=1,\ldots,N$ and any $s\in I_{1,j}$ it holds
				\begin{equation*}
				\gamma_1(s)+tn_j\in A \qquad \forall t \in (0, d(s)).
				\end{equation*}
			\item[(6)] for $j=1,\ldots,N$ the sets
				\begin{equation}\label{E_def_Ej}
				E_j:=\bigcup_{s\in I_{1,j}}\bigcup_{t\in [0,d(s)]}\{\gamma_1(s)+tn_j\}
				\end{equation}
				are pairwise disjoint.
			\item[(7)] The map $X_{1,2}$ is monotone: i.e. for every $s_1<s_1'$ in $D_1$ it holds 
			\begin{equation*}
			X_{1,2}([s_1,s_1']\cap D_1)= [X_{1,2}(s_1),X_{1,2}(s_1')]\cap D_2.
			\end{equation*}
		\end{enumerate}	
	\end{proposition}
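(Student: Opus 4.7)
The plan is to construct the intervals $I_{i,j}$ and the change of variables $X_{1,2}$ by \emph{projecting} $\gamma_1$ onto $\gamma_2$ along short perpendicular segments crossing the annulus $A$, after first splitting each curve according to the direction of its tangent. I would fix four unit vectors $e_1,\ldots,e_4 \in S^1$ along $\pm$ the coordinate axes and the open cones $K_k := \{v \in S^1 : v \cdot e_k > \sqrt{2}/2\}$, which together cover $S^1$. Since $\TV \gamma_i' \le M$, each set $\tilde I_i^k := \{s \in [0,l_i]^* : \dot\gamma_i(s) \in K_k\}$ is a finite disjoint union of open intervals, and on each such interval $\gamma_i$ is a Lipschitz graph over the $e_k$-axis with slope at most $1$. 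This immediately yields the graph representations \eqref{E_Yf1}--\eqref{E_Yf2} (once the pairing between the $I_{1,j}$ and the $I_{2,j}$ is chosen), and differentiating $\gamma_i(s)\cdot e_k$ together with $\dot\gamma_i = b(\gamma_i)/|b(\gamma_i)|$ gives \eqref{E_Y_{1,j}}.

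For the matching I would use a first-hit construction. By the hypothesis $\deg \gamma_1 = -1$ and Theorem \ref{T_Jordan}, for a.e.\ $s \in \tilde I_1^k$ the ray $t \mapsto \gamma_1(s)+t n_k$ (with $n_k = e_k^\perp$ suitably oriented) enters $A$ for small $t>0$. Setting $\tau(s) := \inf\{t>0 : \gamma_1(s)+tn_k \notin A\}$, I call $s$ \emph{good} if the first-hit point $\gamma_1(s)+\tau(s)n_k$ lies on $C^{j_2}_{h_2}$ at a parameter in some $\tilde I_2^{k}$. On good points, define $X_{1,2}(s) := \gamma_2^{-1}(\gamma_1(s)+\tau(s)n_k)$ and $d(s) := \tau(s)$; the intervals $I_{1,j}, I_{2,j}$ are then the connected components of the good sets, labelled so that $X_{1,2}$ pairs them. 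Monotonicity (7) and pairwise disjointness of the strips $E_j$ (6) are forced by the first-hit definition together with the Jordan topology of $A$: two perpendicular segments that crossed would force one of their endpoints to leave $A$ before reaching the opposite curve, contradicting the minimality of $\tau$. Property (5) is immediate from the definition of $\tau$. The bound (4) follows from the Hamiltonian identity
\begin{equation*}
h_2 - h_1 \;=\; H(\gamma_1(s)+d(s) n_k) - H(\gamma_1(s)) \;=\; \int_0^{d(s)} \nabla H(\gamma_1(s)+tn_k)\cdot n_k\, dt,
\end{equation*}
combined with $\nabla H = -b^\perp$ and $|b\cdot e_k| \ge (\sqrt 2/2)|b| \ge (\sqrt 2/2)c_S$ along the segment by $(c_S,M,L)$-admissibility, which yields $d(s) \le \tfrac{2\sqrt 2}{c_S}|h_1-h_2|$.

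For the length estimate (2) I would decompose the bad set $[0,l_1]^* \setminus D_1$ into (a) the transition part where $\dot\gamma_1 \notin \bigcup_k K_k$, and (b) arcs on which the perpendicular ray either returns to $C^{j_1}_{h_1}$ before meeting $C^{j_2}_{h_2}$ or projects into the transition part of $\gamma_2$. Contribution (a) is controlled by $\TV \gamma_i'$ and absorbed into $\tilde c_2|Db|(A)$ via the identification $b=\nabla^\perp H_i$ on each cycle from Corollary \ref{C_ABC}, since jumps of $\dot\gamma_i$ along a cycle arise from jumps of $b$ concentrated in a neighbourhood of $\partial A$. For (b), each discarded excursion has Euclidean diameter at most $2d(s) \le \tfrac{4\sqrt 2}{c_S}|h_1-h_2|$ and therefore parameter length at most $L$ times that, while the number of such excursions is finite (controlled by $M$); summing produces the $\tilde c_1|h_1-h_2|$ term. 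The main obstacle is the simultaneous realization of (6) and (7): one must rule out collisions between adjacent strips $E_j, E_{j+1}$ associated to different cones $K_{k(j)}, K_{k(j+1)}$, which requires an order-consistent refinement of the decomposition driven by the finite-turn hypothesis of Theorem \ref{T_Bourgain}(3) and a careful analysis of what happens at the transition endpoints where the tangent moves from one cone to the next.
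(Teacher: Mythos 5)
Your overall plan---decompose each curve according to tangent direction, project $\gamma_1$ onto $\gamma_2$ along the normal, and use the Hamiltonian identity to bound the projection length---is in the right spirit and is indeed close to what the paper does. The bound in item (4) via $h_2-h_1 = \int_0^{d(s)} \nabla H\cdot n_k\,dt$ with $|\nabla H\cdot n_k| = |b\cdot e_k| \ge \frac{\sqrt 2}{2} c_S$ is essentially the paper's argument, and property (5) is immediate from a first-hit definition, as you say. But there are three places where your proposal has genuine gaps, not just missing routine details.

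First, your argument for item (6) does not work. You claim that two perpendicular strips $E_j$, $E_{j'}$ cannot cross because a crossing ``would force one of their endpoints to leave $A$ before reaching the opposite curve''. This is false: two non-parallel segments each joining $C^{j_1}_{h_1}$ to $C^{j_2}_{h_2}$ across the annulus can perfectly well intersect in an interior point without either endpoint leaving $A$ prematurely; a Jordan-curve argument only forbids this when the normals are parallel (same cone). To make the strips genuinely disjoint the paper introduces a separate mechanism: it passes first to a \emph{piecewise affine} approximation $\gamma_K$ of $\gamma_1$ (Lemma \ref{L_interpolation}), so that all normals within one mesh interval are literally parallel, and then invokes Lemma \ref{L_overlapping} to throw away the set $\mathcal B_a(\gamma_K)$ of parameters whose normal segments overlap, at a cost $\lesssim (1+L)|h_1-h_2|\,\TV\gamma'$, which is precisely the $\tilde c_1|h_1-h_2|$ term in \eqref{E_est_h_1}. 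Your construction skips this step entirely, and without it items (6) and (7) fail.

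Second, you use the \emph{same} aperture $\sqrt 2/2$ for the cone condition on $\gamma_1$ and on $\gamma_2$. This prevents you from estimating, via $|Db|(A)$, the measure of the bad set where $\dot\gamma_1(s)$ lies in the cone $K_k$ but $\dot\gamma_2(X_{1,2}(s))$ does not. If both tangents are arbitrarily close to the cone boundary on opposite sides, $|\dot\gamma_1-\dot\gamma_2|$ can be arbitrarily small, so the difference quotient argument (feeding into Lemma \ref{L_Dg}) gives nothing. The paper resolves this by using a strictly larger threshold $2/\sqrt 5$ on $\gamma_1$ (item (4) of Lemma \ref{L_interpolation}) and a threshold $\sqrt 2/2$ on $\gamma_2$ (with an intermediate threshold $4/(3\sqrt 3)$ used to prove finiteness of components), so that failure of both conditions forces a definite gap $|\dot\gamma_2-\dot\gamma_1|\gtrsim c_S$, whence the bad set is $\lesssim |Db|(A)/c_S$. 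Some such quantitative separation between the two cone conditions is unavoidable.

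Third, your estimate for the discarded excursions in item (2) is wrong in two ways. You assert each discarded excursion has Euclidean diameter $\le 2d(s)$ and hence parameter length $\le 2Ld(s)$; but the arcs of $\gamma_1$ from which the ray never reaches $C^{j_2}_{h_2}$ (either because $H>h_2$ along the whole segment of length $\sim|h_1-h_2|/c_S$, or because it first meets another component $C^{j'}_{h_2}\subset A$) have no diameter bound in terms of $d(s)$. The paper handles the first case by Lemma \ref{L_Dg} (the graph of $\nabla H\cdot n$ must oscillate by $\gtrsim c_S$ across the thin strip), and the second case by Lemma \ref{L_folding}, which converts the total length of extra components inside $A$ into $|Db|(A)$. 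Your proposal contains no analogue of Lemma \ref{L_folding}, and the heuristic ``finitely many excursions, each short'' does not sum to a bound of the form $\tilde c_1|h_1-h_2| + \tilde c_2|Db|(A)$. (A minor further point: $\dot\gamma_i$ being $\BV$ does not by itself make $(\dot\gamma_i)^{-1}(K_k)$ a \emph{finite} union of intervals, so even the starting decomposition needs the refinement the paper builds via the piecewise affine interpolant.)
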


	\begin{figure}
	\centering
	\def\svgwidth{0.9\columnwidth}
	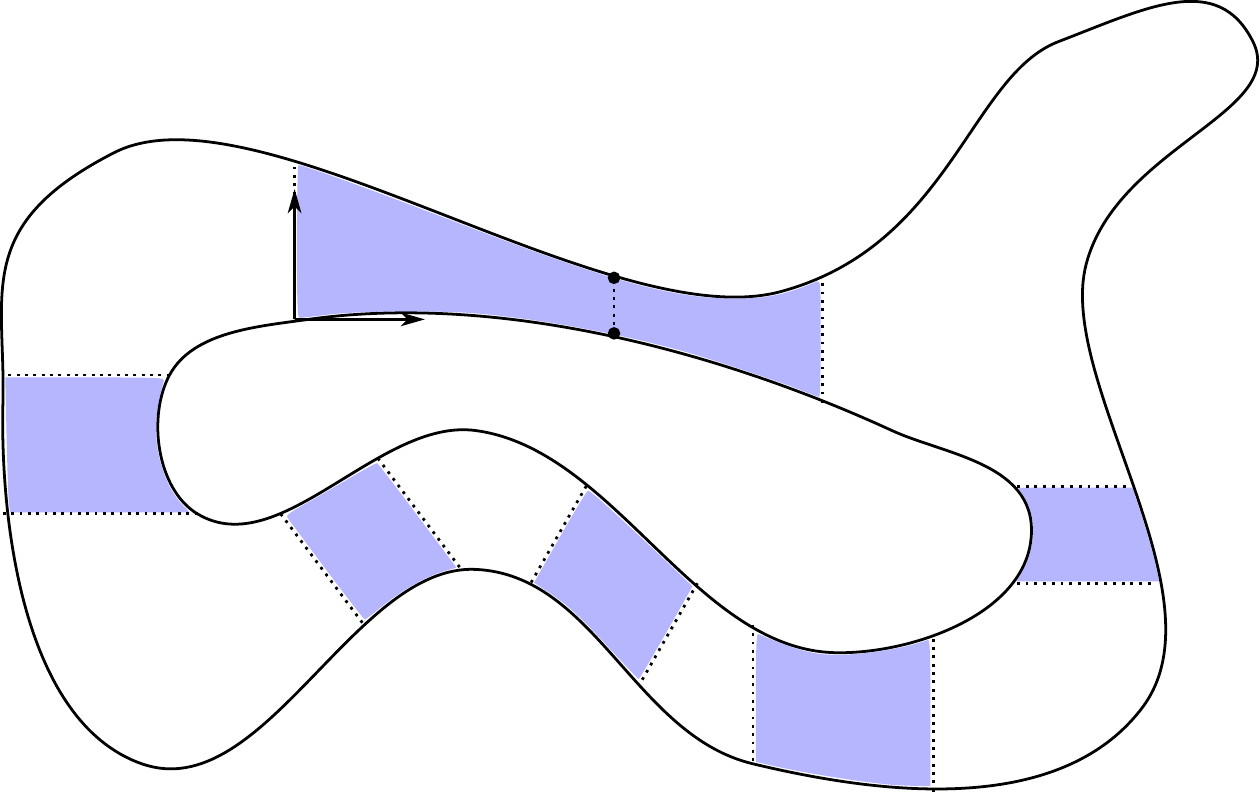
	\caption{Two cycles $\gamma_1$ and $\gamma_2$ as in Proposition \ref{P_cov}}\label{F_cov}
	\end{figure}

	The situation in Proposition \ref{P_cov} is represented in Figure \ref{F_cov}. In a few words the proposition above asserts that it is possible to map a part of the trajectory 
	on $\gamma_1$ onto $\gamma_2$. The fundamental property is that for every $j=1,\ldots,N$ the segments $\{ \gamma_1(s)+tn_j \}$ in \eqref{E_def_Ej} are parallel. 
	In that way the comparison of the evolutions of points on $\gamma_1$ and $\gamma_2$ is locally reduced to the straightforward case of a shear flow.
	The error due to the evolution of the points in $\gamma_1([0,l_1]^*\setminus D_1)$ and $\gamma_2([0,l_2]^*\setminus D_2)$ will be estimated by \eqref{E_est_h_1}.

	\begin{remark}\label{R_signs}
	It will be clear from the proof that the same statement holds true if instead of assuming
	\begin{equation*}
	h_1>h_2, \quad \Int(C^{j_1}_{h_1})\subset \Int(C^{j_2}_{h_2}), \quad \deg(\gamma_1)=\deg(\gamma_2)=-1, 
	\end{equation*}
	we assume one of the following:
	\begin{enumerate}
	\item $h_1>h_2, \quad  \Int(C^{j_2}_{h_2})\subset \Int(C^{j_1}_{h_1}), \quad \deg(\gamma_1)=\deg(\gamma_2)=1$;
	\item $h_1<h_2, \quad  \Int(C^{j_2}_{h_2})\subset \Int(C^{j_1}_{h_1}), \quad \deg(\gamma_1)=\deg(\gamma_2)=-1$; 
	\item $h_1<h_2, \quad  \Int(C^{j_1}_{h_1})\subset \Int(C^{j_2}_{h_2}), \quad \deg(\gamma_1)=\deg(\gamma_2)=1$.
	\end{enumerate}
	In cases (2) and (3) we need to consider $n_j:=-e_j^\perp$ instead of $n_j=e_j^\perp$.
	\end{remark}
	
	In order to prove Proposition \ref{P_cov} we need some lemmas.
		
	\begin{lemma}\label{L_interpolation}
	Let $\gamma:[0,l_\gamma]^*\to \R^2$ be a simple curve with finite turn and Lipschitz inverse. 
	Then there exist $K\in \N$, $l_K>0$, a bi-Lipschitz map $\tilde y_K:[0,l_\gamma]\to[0,l_K]$ and
	$0=s_1<\ldots<s_K=l_\gamma$ such that the following conditions are satisfied:
	\begin{enumerate}
	\item $\tilde y_K(0)=0$ and $\tilde y_K(l_{\gamma})=l_{K}$;
	\item $\gamma_K:=\gamma\circ \tilde y_K^{-1}: [0,l_K]\to \R^2$ is a simple curve with finite turn and Lipschitz inverse;
	\item for every $i=1,\ldots,K-1$ and for every $y\in (\tilde y_K(s_i),\tilde y_K(s_{i+1}))$ it holds 
	\begin{equation}\label{E_514}
	\dot\gamma_K(y)=e_i, \qquad \mbox{where}\qquad  e_i=\frac{\gamma(s_{i+1})-\gamma(s_i)}{|\gamma(s_{i+1})-\gamma(s_i)|};
	\end{equation}
	\item for every $i=1,\ldots,K-1$ and for $\L^1$-a.e. $s\in (s_i,s_{i+1})$ it holds
		\begin{equation}\label{E_1/2Lip}
		\dot\gamma(s)\cdot e_i \ge \frac{2}{\sqrt 5};
		\end{equation}
	\end{enumerate}
	\end{lemma}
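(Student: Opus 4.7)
The plan is to construct $0 = s_1 < \ldots < s_K = l_\gamma$ so that the tangent $\dot\gamma$ oscillates little on each $(s_i, s_{i+1})$, and then take $\tilde y_K$ to be the scalar projection of $\gamma$ onto the chord direction $e_i$ on each subinterval.

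\textbf{Step 1 (building the partition).} Let $\mu_\gamma := |\gamma''|$ denote the total variation measure of $\gamma'$, which is finite by the finite-turn hypothesis. Set $\theta^* := \arccos(2/\sqrt 5)$ and fix $\tau \in (0, 2\sin(\theta^*/2))$, chosen so that any two unit vectors at Euclidean distance less than $\tau$ make an angle strictly less than $\theta^*$. Since $\mu_\gamma$ is finite, only finitely many of its atoms have mass $\ge \tau/2$; mark each of these as a tentative breakpoint. On each interval between two consecutive marked points the maximum atomic mass of $\mu_\gamma$ is strictly less than $\tau$, so Lemma \ref{lemma:measure} yields $\delta > 0$ with the property that every subinterval of length at most $2\delta$ carries $\mu_\gamma$-mass less than $\tau$. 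Subdividing each piece further into intervals of length at most $2\delta$ produces the desired finite partition $0 = s_1 < \ldots < s_K = l_\gamma$.

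\textbf{Step 2 (verification of (3)--(4) and definition of $\tilde y_K$).} Fix $i \in \{1,\ldots,K-1\}$. Since $\TV_{(s_i,s_{i+1})}\gamma' < \tau$, all Lebesgue values of $\dot\gamma$ on this interval lie in a spherical cap of angular diameter less than $\theta^*$. Simplicity of $\gamma$ gives $\gamma(s_{i+1}) \neq \gamma(s_i)$, hence $e_i$ is well defined by \eqref{E_514}; as $e_i$ is proportional to $\int_{s_i}^{s_{i+1}} \dot\gamma(\sigma)\,d\sigma$, it lies in the same cap, so $e_i \cdot \dot\gamma(s) \ge \cos\theta^* = 2/\sqrt 5$ for a.e. $s$, proving \eqref{E_1/2Lip}. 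Set $\tilde y_K(0) := 0$ and define
\begin{equation*}
\tilde y_K(s) := \tilde y_K(s_i) + (\gamma(s) - \gamma(s_i)) \cdot e_i \qquad \text{for } s \in [s_i, s_{i+1}],
\end{equation*}
so that $\tilde y_K'(s) = \dot\gamma(s)\cdot e_i \in [2/\sqrt 5, 1]$ a.e., whence $\tilde y_K$ is bi-Lipschitz and strictly increasing, continuous across the breakpoints. Setting $\gamma_K := \gamma \circ \tilde y_K^{-1}$ makes each arc of $\gamma_K$ a straight segment in direction $e_i$, giving (3); (1) is immediate.

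\textbf{Step 3 (properties of $\gamma_K$).} Finite turn of $\gamma_K$ is clear, and the turning angle at each interior vertex is bounded by $2\theta^* < \pi$, so $\gamma_K$ has no cusps; once simplicity is verified, Lemma \ref{L_Lip_inverse} yields Lipschitz invertibility of $\gamma_K$. To guarantee simplicity we exploit the bi-Lipschitz character of $\gamma$: for each $i$ both the arc $\gamma([s_i,s_{i+1}])$ and the PL chord lie in a sector of opening $2\theta^*$ around $e_i$ with apex $\gamma(s_i)$ and radius at most $2\delta$, while pairs of non-consecutive arcs of $\gamma$ are separated by a positive distance quantified by the Lipschitz constant of $\gamma^{-1}$. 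Choosing $\delta$ small enough relative to this global separation scale makes the sectors associated to non-consecutive arcs pairwise disjoint, whence $\gamma_K$ inherits simplicity from $\gamma$.

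The main obstacle I anticipate is precisely this last simplicity check: one must quantitatively couple the local mesh $\delta$ to the global bi-Lipschitz constant of $\gamma$ in order to rule out far-away self-approaches of the piecewise linear curve. The remainder is a fairly direct combination of Lemma \ref{lemma:measure}, the finite-turn hypothesis, and an elementary projection computation.
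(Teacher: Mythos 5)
Your overall strategy (piecewise-linear interpolation with small tangent oscillation on each subinterval, reparametrized by arclength) is the same as the paper's, and Steps~1--2 are sound. Your derivation of~(4) via the observation that the essential range of $\dot\gamma$ on $(s_i,s_{i+1})$ lies in an arc of $\mathbb{S}^1$ of angular length $<\theta^*$ and that $e_i$, being the normalized average, lies in the same arc, is a legitimate variant of the paper's computation, which instead uses $\int_{s_i}^{s_{i+1}}\dot\gamma\cdot n_i\,ds=0$ together with $\TV(\dot\gamma\cdot n_i)\le\mu_\gamma((s_i,s_{i+1}))$.

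The gap is the injectivity of $\gamma_K$ in Step~3. You assert that non-consecutive arcs of $\gamma$ are separated by a distance controlled by $\Lip(\gamma^{-1})$ alone, and that shrinking $\delta$ makes the corresponding sectors disjoint. Neither holds uniformly in the mesh: the distance between $\gamma([s_i,s_{i+1}])$ and $\gamma([s_{i+2},s_{i+3}])$ is bounded below only by $\Lip(\gamma^{-1})^{-1}(s_{i+2}-s_{i+1})$, a quantity of order $\delta$, while the two sectors have apexes at distance at most $s_{i+2}-s_i\le 4\delta$ and each has radius $2\delta$, so, since $\Lip(\gamma^{-1})\ge 1$ for a unit-speed curve, they cannot be separated by taking $\delta$ small. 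The ``global separation scale'' you invoke is itself of order $\delta$, so the argument is circular. (Relatedly, the turning angle of $\gamma_K$ at an interior vertex is not bounded by $2\theta^*$: it can be $2\theta^*$ plus the angular size of the atom of $\mu_\gamma$ at that vertex, which may be close to $\pi$.) The paper handles injectivity by a different, non-geometric mechanism: if $\gamma_K$ had a double point, then $\gamma_K'$ would have Euclidean total variation at least $2$ on the parameter sub-arc between the two preimages, since otherwise the tangents would lie in an arc of $\mathbb{S}^1$ of length $<\pi$ and the displacement could not vanish; transferring this to $\mu_\gamma((s_i,s_{i'+1}))\ge 2$ and invoking the absence of atoms of mass $2$ through Lemma~\ref{lemma:measure} (used here a second time, not only in the partition-building step) gives a mesh-independent lower bound on $s_{i'+1}-s_i$ and hence on $|\gamma(s_{i'+1})-\gamma(s_i)|$, which a sufficiently fine mesh contradicts via the uniform estimate on $\gamma-\gamma_K\circ\tilde y_K$. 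Your Step~3 would need to be replaced by an argument of this kind.
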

	\begin{proof}
	First notice that any choice of $s_2<\ldots<s_{K-1}$ as in the statement uniquely selects $l_K>0$, the bi-Lipschitz map $\tilde y_K$ 
	and a continuous curve $\gamma_K:[0,l_K]\to \R^2$ such that $(1)$ and $(3)$ hold. Now we show that we can choose $s_2<\ldots<s_{K-1}$
	in such a way that also $(2)$ and $(4)$ are satisfied. In order to get $(4)$ we claim that if $\mu_\gamma((s_i,s_{i+1}))\le \frac{1}{\sqrt 5}$, then 
	\eqref{E_1/2Lip} holds. In fact from \eqref{E_514} it follows that
	\begin{equation*}
	\int_{s_i}^{s_{i+1}}\dot\gamma(s)\cdot n_i ds =0,
	\end{equation*}
	therefore
	\begin{equation*}
	\sup_{s\in (s_i,s_{i+1})}|\dot\gamma(s)\cdot n_i|\le \TV(\dot\gamma\cdot n_i \llcorner (s_i,s_{i+1})) \le \mu_\gamma((s_i,s_{i+1}))\le \frac{1}{\sqrt 5}.
	\end{equation*}
	Since   $|\dot\gamma(s)\cdot n_i|\le \frac{1}{\sqrt 5} \Rightarrow |\dot\gamma(s)\cdot e_i|\ge \frac{2}{\sqrt 5}$, then it remains to check that for $\L^1$-a.e. $s\in (s_i,s_{i+1})$ it holds
	$\dot\gamma(s)\cdot e_i\ge 0$. This easily follows from $\TV(\dot\gamma \cdot e_i\llcorner (s_i,s_{i+1}))\le \mu_\gamma((s_i,s_{i+1}))$ and concludes the proof of the claim.
	
	Thanks to Lemma \ref{L_Lip_inverse}, in order to prove (2) it is sufficient to check that $\gamma_K$ is one to one.
	If there exists $s<s'$ such that $\gamma_K(\tilde y_K(s))=\gamma_K(\tilde y_K(s'))$ then $\mu_{\gamma_K}((\tilde y_K(s),\tilde y_K(s')))\ge 2$. 
	Moreover it is easy to check that for every $1\le i_1 < i_2\le K$ it holds $\mu_{\gamma_K}((\tilde y_K(s_{i_1}),\tilde y_K(s_{i_2})))\le \mu_{\gamma}((s_{i_1},s_{i_2}))$ therefore,
	letting $i$ and $i'$ be such that $s\in (s_i,s_{i+1}), s'\in (s_{i'},s_{i'+1})$, it holds $\mu_\gamma((s_{i}, s_{i'+1}))\ge 2$.
	Since by assumption $\mu_\gamma$ has no atoms of size 2, by Lemma \ref{lemma:measure} it follows that there exists $\delta >0$ such that $s_{i'+1}-s_i\ge \delta$.
	Therefore 
	\begin{equation}\label{E_lower}
	|\gamma(s_{i'+1})-\gamma(s_i)|\ge L\delta,
	\end{equation}
	where $L$ is the Lipschitz constant of $\gamma^{-1}$. 
	Choosing $s_i$ such that $\max_{i}s_{i+1}-s_i <L\delta/4$, then 
	\begin{equation*}
	\max_s |\gamma(s)-\gamma_K(\tilde Y_K(s))|\le L\delta/4
	\end{equation*}
	so that
	\begin{equation*}
	\begin{split}
	|\gamma(s_{i'+1})-\gamma(s_i)|\le & |\gamma(s_{i'+1}) - \gamma_K(\tilde y_K(s_{i'+1}))|+  |\gamma_K(\tilde y_K(s_{i'+1}))-\gamma_K(\tilde y_K(s'))|
	+ |\gamma_K(\tilde y_K(s'))-\gamma_K(\tilde y_K(s))| \\& +|\gamma_K(\tilde y_K(s))-\gamma_K(\tilde y_K(s_i))| + |\gamma_K(\tilde y_K(s_i))-\gamma(s_i)| \\
	< & L\delta/4 +  L\delta/4  + 0 +  L\delta/4 + L\delta/4 \\
	=& L\delta
	\end{split}
	\end{equation*}
	and this is in contradiction with \eqref{E_lower}. 
	Being $\mu_\gamma$ a finite measure it is possible to choose $s_2<\ldots< s_{K-1}$ so that $\mu_\gamma((s_i,s_{i+1}))\le \frac{1}{\sqrt 5}$ for every $i=1,\ldots, K-1$ and 
	so that  $\max_{i}s_{i+1}-s_i <L\delta/4$. With this choice also properties (4) and (2) are granted and this concludes the proof of the lemma.
	\end{proof}

	\begin{lemma}\label{L_overlapping}
	Let $a>0$, $\gamma:[0,l_\gamma]^*\to \R^2$ be a simple curve with finite turn such that $\gamma^{-1}$ is $L$-Lipschitz. 
	Denote by 
	\begin{equation*}
	\begin{split}
	\mathcal B_a(\gamma):=\{s&\in [0,l_\gamma]: \gamma \mbox{ is differentiable at }s \\
	&\mbox{ and } \exists s'\in[0,l_\gamma],t,t'\in [-a,a] \mbox{ such that } s'\neq s \mbox{ and }\gamma(s)+tn(s)=\gamma(s')+t'n(s')\},
	\end{split}
	\end{equation*}
	where $n(s)=\dot\gamma(s)^\perp$.
	Then there exists an absolute constant $\underline c_0>0$ such that 
	\begin{equation*}
	\H^1(\mathcal B_a(\gamma))\le \underline c_0(1+L)a\TV_{[0,l_\gamma]^*}\gamma'.
	\end{equation*}
	\end{lemma}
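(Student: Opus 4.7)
The plan is to exploit the defining identity $\gamma(s)+tn(s)=\gamma(s')+t'n(s')$ in two complementary ways: as a geometric constraint on $|\gamma(s)-\gamma(s')|$ and as an algebraic identity controlling $|s-s'|$ by the curvature measure $\mu:=|\gamma''|$. Then I would split $\mathcal B_a(\gamma)$ into a ``local'' and a ``far'' regime and estimate each separately.

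First, for every $s\in\mathcal B_a(\gamma)$ with witness $(s',t,t')$, the identity gives $|\gamma(s)-\gamma(s')|=|tn(s)-t'n(s')|\le 2a$, hence by the $L$-Lipschitz inverse of $\gamma$ the arc-length distance on $[0,l_\gamma]^*$ satisfies $d(s,s')\le 2aL$. Write $[s,s']$ for the shorter of the two arcs joining $s$ and $s'$, and pick a continuous determination $\theta(\sigma)$ of the argument of $\dot\gamma$ along $[s,s']$, so that $\TV_{[s,s']}\theta=\mu([s,s'])$. Taking the scalar product of $\gamma(s')-\gamma(s)=tn(s)-t'n(s')$ with $\dot\gamma(s)$ and using $n(s')\cdot\dot\gamma(s)=\sin(\theta(s)-\theta(s'))$ yields the key identity
\begin{equation*}
\int_s^{s'}\cos(\theta(\sigma)-\theta(s))\,d\sigma\;=\;t'\sin(\theta(s')-\theta(s)),
\end{equation*}
and therefore the key estimate
\begin{equation*}
\Bigl|\int_s^{s'}\cos(\theta(\sigma)-\theta(s))\,d\sigma\Bigr|\;\le\; a\,\mu([s,s']).\qquad(\star)
\end{equation*}

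Next I would split $\mathcal B_a=\mathcal B_a^{\loc}\sqcup\mathcal B_a^{\mathrm{far}}$, where $\mathcal B_a^{\loc}$ collects those $s$ for which a witness can be chosen with $\mu([s,s'])\le \pi/4$. For $s\in\mathcal B_a^{\mathrm{far}}$ every witness satisfies $\mu(B_{2aL}(s))\ge\mu([s,s'])>\pi/4$, so the Fubini identity $\int_0^{l_\gamma}\mu(B_{2aL}(s))\,ds=\min(4aL,l_\gamma)\,\TV\gamma'$ gives
\begin{equation*}
\tfrac{\pi}{4}\,\H^1(\mathcal B_a^{\mathrm{far}})\;\le\;\int_{\mathcal B_a^{\mathrm{far}}}\mu(B_{2aL}(s))\,ds\;\le\;4aL\,\TV\gamma',
\end{equation*}
whence $\H^1(\mathcal B_a^{\mathrm{far}})\le (16L/\pi)\,a\,\TV\gamma'$. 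For $s\in\mathcal B_a^{\loc}$, the bound $\mu([s,s'])\le \pi/4$ forces $|\theta(\sigma)-\theta(s)|\le\pi/4$ on $[s,s']$, so $\cos(\theta(\sigma)-\theta(s))\ge\sqrt 2/2$ there, and ($\star$) gives $|s'-s|\le\sqrt 2\,a\,\mu([s,s'])$. The arcs $\{[s,s']\}_{s\in\mathcal B_a^{\loc}}$ cover $\mathcal B_a^{\loc}$; applying a Vitali-type covering lemma on the circle I can extract a countable disjoint subfamily $\{[s_k,s_k']\}$ with $\mathcal B_a^{\loc}\subset\bigcup_k 3[s_k,s_k']$, yielding
\begin{equation*}
\H^1(\mathcal B_a^{\loc})\;\le\;3\sum_k|s_k'-s_k|\;\le\;3\sqrt 2\,a\sum_k\mu([s_k,s_k'])\;\le\;3\sqrt 2\,a\,\TV\gamma'.
\end{equation*}
Summing the two estimates gives the conclusion with $\underline c_0$ an absolute constant.

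The main obstacle will be the ``far'' regime, where two widely separated portions of the cycle come within $2a$ of each other in $\R^2$ (as in a very thin ellipse): purely curvature-based information does not suffice there, and the factor $L$ in the statement enters exactly through the Fubini argument above in order to quantify this non-local self-proximity. The ``local'' regime is by contrast governed only by $a\,\TV\gamma'$ and a covering argument, which explains the shape $(1+L)$ of the final constant.
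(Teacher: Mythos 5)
Your proof is correct in its essentials and arrives at the same pointwise information as the paper, but by a more circuitous route, and it contains one small technical slip.

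\textbf{The slip.} You assert $\TV_{[s,s']}\theta=\mu([s,s'])$ for $\mu:=|\gamma''|$. This holds for the absolutely continuous and Cantor parts, but fails on jumps: a jump of $\dot\gamma$ by an angle $\delta\in(0,\pi)$ contributes $\delta$ to $\TV\,\theta$ but only $2\sin(\delta/2)<\delta$ to $\mu$. Consequently $\mu([s,s'])\le\pi/4$ does not literally force $|\theta(\sigma)-\theta(s)|\le\pi/4$, so the ``local'' regime as written has a small gap. The fix is harmless: since $\gamma^{-1}$ is Lipschitz, $\gamma$ has no cusps, hence $\mu\le\TV\,\theta\le\tfrac{\pi}{2}\,\mu$; phrasing the dichotomy in terms of $\TV\,\theta$ (or shrinking the threshold below $\pi/4$) repairs the argument and only changes the absolute constant, since at the end $\TV\,\theta\le\tfrac{\pi}{2}\TV_{[0,l_\gamma]^*}\gamma'$.

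\textbf{Comparison with the paper's proof.} The paper avoids lifting to the angle $\theta$ and avoids the local/far split altogether. From $\gamma(s)+tn(s)-\gamma(s')=t'n(s')\perp\gamma'(s')$, it expands
\begin{equation*}
(\gamma'(s)-\gamma'(s'))\cdot(\gamma(s)+tn(s)-\gamma(s'))
=\gamma'(s)\cdot\bigl(\gamma(s)+\gamma'(s)(s'-s)-\gamma(s')\bigr)-(s'-s),
\end{equation*}
and bounds both error terms by $\mu_\gamma([s,s'])$ to get the single uniform inequality $|s'-s|\le\mu_\gamma([s,s'])\,(a+|s'-s|)$. Combined with $|s'-s|\le 2La$ this yields the pointwise bound $M_{\mu_\gamma}(s)\gtrsim\frac{1}{(1+L)a}$ on $\mathcal B_a(\gamma)$, and the conclusion follows at once from the weak $(1,1)$ inequality for the maximal operator. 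Your two regimes re-derive exactly this: the local case gives a large ratio $\mu([s,s'])/|s'-s|$ and the far case gives a large $\mu(B_{2aL}(s))$, and you then replace weak $(1,1)$ by a Vitali covering (local) and a Fubini/Chebyshev count (far). So the two proofs are closely related; the paper's is shorter and sidesteps the angle-lifting subtlety above, while yours makes visible the geometric dichotomy between genuine local curvature and non-local self-proximity of the curve, and in the local regime produces a bound independent of $L$.
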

	\begin{proof}
	We claim that if there exist $t,t'\in[-a,a]$ such that $\gamma(s)+tn(s)=\gamma(s')+t'n(s')$, then 
	\begin{equation*}
	\mu_\gamma([s,s'])\ge\frac{|s'-s|}{(1+2L) a}.
	\end{equation*}
	In particular
	\begin{equation*}
	s\in \mathcal B_a(\gamma) \Rightarrow M_{\mu_\gamma}(s)\ge \frac{1}{(1+2L)a},
	\end{equation*}
	where $M_{\mu_\gamma}$ denotes the maximal function of the measure $\mu_\gamma$.
	The lemma follows from this claim by the weak $L^1$ continuity property of the maximal operator.
	Let us prove the claim:
	since $\gamma(s)+tn(s)=\gamma(s')+t'n(s')$ with $t,t'\in[-a,a]$ and $\gamma^{-1}$ is $L$-Lipschitz, then $|s'-s|\le 2La$.
	From $ \gamma(s)+tn(s)=\gamma(s')+t'n(s') $ it also follows that 
	\begin{equation*}
	\gamma(s)+tn(s)-\gamma(s') \parallel n(s') \qquad \mbox{i.e.} \qquad 
	(\gamma(s)+tn(s)-\gamma(s'))\cdot \gamma'(s')=0.
	\end{equation*}
	Therefore
	\begin{equation*}
	\begin{split}
	(\gamma'(s)-\gamma'(s'))\cdot (\gamma(s)+tn(s)-\gamma(s')) = & \gamma'(s)\cdot (\gamma(s)+tn(s)-\gamma(s')) \\
	=& \gamma'(s)\cdot(\gamma(s)-\gamma'(s)) \\
	=& \gamma'(s)\cdot (\gamma(s)+\gamma'(s)(s'-s)-\gamma(s')) - |\gamma'(s)|^2(s'-s),
	\end{split}
	\end{equation*}
	so in particular
	\begin{equation*}
	\begin{split}
	|s'-s|\le & |(\gamma'(s)-\gamma'(s'))\cdot (\gamma(s)+tn(s)-\gamma(s'))| + |\gamma(s)+\gamma'(s)(s'-s)-\gamma(s')| \\
	\le & \mu_\gamma([s,s'])a + \mu_\gamma([s,s'])|s'-s|.
	\end{split}
	\end{equation*}
	Therefore, since $|s'-s|\le 2La$,
	\begin{equation*}
	M_{\mu_\gamma}(s)\ge \frac{\mu_{\gamma}([s,s'])}{|s'-s|} \ge \frac{1}{|s'-s|+a}\ge \frac{1}{(1+2L)a}.
	\end{equation*}
	This concludes the proof of the claim and therefore of the lemma.
	\end{proof}

	A simple application of the slicing theory of $\BV$ functions provides the following lemma. See \cite{AFP_book}.
	
	\begin{lemma}\label{L_Dg}
	Let $f_1,f_2:D\subset \R\to \R$ two measurable functions with $f_1\le f_2$ and let $g\in \BV(\R^2)$.
	Denote by
	\begin{equation*}
	E:=\{(x_1,x_2)\in D\times \R: f_1(x_1)\le x_2\le f_2(x_1)\}.
	\end{equation*}
	Then 
	\begin{equation*}\label{E_Dg}
	\int_D|g^*(f_2(x_1))-g^*(f_1(x_1))|dx_1 \le |Dg|(E),
	\end{equation*}
	where $g^*$ denotes the precise representative of $g$.	
	\end{lemma}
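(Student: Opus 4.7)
The plan is to apply the one-dimensional slicing theory for $\BV$ functions in the $x_2$ direction. Recall from \cite{AFP_book} that for $g\in \BV(\R^2)$ and $\L^1$-a.e. $x_1\in \R$, the section $g^{x_1}(x_2):=g(x_1,x_2)$ belongs to $\BV(\R)$, its precise representative coincides with $x_2\mapsto g^*(x_1,x_2)$, and for every Borel set $A\subset \R^2$
\begin{equation*}
\int_\R |Dg^{x_1}|(A^{x_1})\,dx_1 \le |Dg|(A),
\end{equation*}
where $A^{x_1}:=\{x_2\in\R:(x_1,x_2)\in A\}$. This is the only non-trivial ingredient.

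First I would fix the slice $x_1\in D$ (in the full-measure set where the slicing identity holds) and apply the elementary fact that for a one-dimensional $\BV$ function $v$ and any $a\le b$,
\begin{equation*}
|v^*(b)-v^*(a)|\le |Dv|([a,b]).
\end{equation*}
Applied to $v=g^{x_1}$ with $a=f_1(x_1)$ and $b=f_2(x_1)$, this yields
\begin{equation*}
|g^*(x_1,f_2(x_1))-g^*(x_1,f_1(x_1))|\le |Dg^{x_1}|([f_1(x_1),f_2(x_1)]) = |Dg^{x_1}|(E^{x_1}).
\end{equation*}

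Then I would integrate this pointwise inequality over $x_1\in D$ and invoke the slicing inequality above with $A=E$, obtaining
\begin{equation*}
\int_D|g^*(f_2(x_1))-g^*(f_1(x_1))|\,dx_1 \le \int_D |Dg^{x_1}|(E^{x_1})\,dx_1 \le |Dg|(E),
\end{equation*}
which is the conclusion. The only minor issue to check is the measurability of the integrand on the left-hand side, which follows from the measurability of $f_1,f_2$ together with the Borel character of the precise representative $g^*$; this is a routine verification and not an obstacle. Overall, the lemma is essentially a direct application of the $\BV$ slicing machinery, so I do not anticipate any substantial difficulty beyond correctly invoking the statements above.
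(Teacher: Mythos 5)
The paper itself provides no proof of Lemma~\ref{L_Dg}; it simply states that the lemma is ``a simple application of the slicing theory of $\BV$ functions'' and refers to \cite{AFP_book}. Your argument spells out precisely that application --- slicing in the $x_2$-direction, the one-dimensional estimate $|v^*(b)-v^*(a)|\le|Dv|([a,b])$, and the slicing inequality $\int|Dg^{x_1}|(E^{x_1})\,dx_1\le|Dg|(E)$ --- and is correct and in line with what the authors intend. The one point worth being explicit about when citing \cite{AFP_book} is the compatibility of precise representatives under slicing (that for $\L^1$-a.e.~$x_1$ the good representative of the one-dimensional slice $g^{x_1}$ coincides with $x_2\mapsto g^*(x_1,x_2)$ for all $x_2$), which you state but do not prove; this is a genuine, though standard, ingredient rather than a mere measurability issue.
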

		
	\begin{lemma}\label{L_folding}
	Let $h\in R$ and let $\gamma:=\gamma^j_h$ parametrizing a cycle $C^j_h$ as in Theorem \ref{T_Bourgain}. Assume moreover that $\gamma$ is $(c_S,M,L)$-admissible. 
	Then there exists $c=c(c_S,M)>0$ such that
	\begin{equation*}
	|Db|(\Int (C^j_h))\ge c l_\gamma.
	\end{equation*}
	\end{lemma}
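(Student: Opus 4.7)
My plan is to obtain the lower bound by a slicing argument that exploits the tangency of $b$ to $\gamma$ together with the pointwise estimate $|b\circ\gamma|\ge c_S$. Set $E:=\Int(C^j_h)$ and fix the canonical basis $\{e_1,e_2\}$ of $\R^2$. For each pair $(i,j)$ with $\{i,j\}=\{1,2\}$, the slicing formula for BV functions yields
\[
|D_j b_i|(E)=\int_\R \bigl|D b_i^{(t)}\bigr|(E^{(t)})\,dt,
\]
where $b_i^{(t)}(\tau):=b_i(te_i+\tau e_j)$ is the restriction of $b_i$ to the line parallel to $e_j$ at height $t$ in the $e_i$-direction, and $E^{(t)}$ is the corresponding 1D slice of $E$. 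For $\L^1$-a.e.\ $t$, Theorem \ref{T_Bourgain}(4) combined with coarea ensures that $E^{(t)}$ is a finite disjoint union of open intervals whose endpoints lie in $C^j_h\setminus (S_b\cup J_b)$.

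On each such interval $(a^-,a^+)\subset E^{(t)}$, the BV variation of $b_i^{(t)}$ bounds the oscillation between the one-sided 1D traces at the two endpoints, which, for $\L^1$-a.e.\ $t$, coincide with the 2D approximate values of $b$. Since $|b\circ\gamma|\ge c_S>0$, the tangency $b\circ\gamma\parallel\dot\gamma$ gives $b\circ\gamma=\pm|b\circ\gamma|\dot\gamma$ with a sign that is constant along $\gamma$ outside the at-most-countable set $C^j_h\cap J_b$. By an elementary orientation argument based on Jordan's theorem (Theorem \ref{T_Jordan}), at the two endpoints of any interior interval of $E^{(t)}$ the tangent component $\dot\gamma\cdot e_i$ has opposite signs; hence so does $b_i$, and the oscillation equals the sum of the two absolute values.

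Summing over all intervals of $E^{(t)}$, integrating in $t$, and applying the coarea formula for the $1$-Lipschitz projection $s\mapsto \gamma(s)\cdot e_i$ together with the tangency identity $|b_i\circ\gamma|=|b\circ\gamma|\,|\dot\gamma\cdot e_i|$, I obtain
\[
|D_j b_i|(E)\ge \int_0^{l_\gamma}|b(\gamma(s))|\,(\dot\gamma(s)\cdot e_i)^2\,ds\ge c_S\int_0^{l_\gamma}(\dot\gamma(s)\cdot e_i)^2\,ds.
\]
Summing the two choices $(i,j)=(2,1)$ and $(i,j)=(1,2)$ and using $|\dot\gamma|=1$ gives $|D_1 b_2|(E)+|D_2 b_1|(E)\ge c_S\,l_\gamma$. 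Since each of these terms is bounded above by $|Db|(E)$, the lemma follows with $c=c_S/2$. Note that this proof uses neither the bound $\TV\gamma'\le M$ nor $\Lip(\gamma^{-1})\le L$, so the constant in fact depends only on $c_S$.

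The main technical point to verify is the trace identification: for $\L^1$-a.e.\ $t$, the one-sided 1D traces of $b_i^{(t)}$ at the endpoints of the intervals of $E^{(t)}$ must coincide with the approximate values of $b$ at the corresponding points of the cycle. This is standard in the theory of BV slicing (cf.~\cite{AFP_book}) and relies precisely on Theorem \ref{T_Bourgain}(4), which guarantees $C^j_h\cap S_b=\emptyset$ and that $C^j_h\cap J_b$ is at most countable, so that for $\L^1$-a.e.\ $t$ the endpoints lie in the set of approximate continuity of $b$.
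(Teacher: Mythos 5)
Your proof is correct, and it takes a genuinely different route from the paper's. The paper localizes: it uses the finite-turn bound $\TV\gamma'\le M$ to extract a single arc of length $\gtrsim l_\gamma/M$ on which $\dot\gamma$ is nearly constant, pushes perpendicularly across the interior to the point $\gamma(g(s))$ where Jordan's theorem forces $\dot\gamma\cdot e\le 0$, and then invokes Lemma~\ref{L_Dg} on this graph-like strip, yielding $c = c_1 c_S/(2M)$. You instead slice globally in both coordinate directions and use the fact that, by Jordan, the crossings with every transversal line alternate orientation, so that the tangential jump of $b_i$ across each interval of $E^{(t)}$ contributes $|b_i(a^-)|+|b_i(a^+)|$; the coarea formula for $s\mapsto\gamma(s)\cdot e_i$ and the identity $|b_i\circ\gamma|=|b\circ\gamma|\,|\dot\gamma_i|$ then turn the double sum into $\int_0^{l_\gamma}|b\circ\gamma|\,\dot\gamma_i^2\,ds$, and summing over $i=1,2$ gives the clean constant $c=c_S/2$. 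This is strictly stronger than the paper's conclusion: your constant depends only on $c_S$, not on $M$ or $L$, so the lemma in fact holds for any cycle parametrized by an arclength curve with $|b\circ\gamma|\ge c_S$ (one still needs finite turn to guarantee that $\dot\gamma$ is defined and that the tangency $b\circ\gamma=\epsilon|b\circ\gamma|\dot\gamma$ with constant sign makes sense, but the quantitative bound $\TV\gamma'\le M$ plays no role). Two small points are worth making explicit. First, the constancy of the sign $\epsilon$ along the cycle is not purely a consequence of $b\parallel\dot\gamma$ and $|b|\ge c_S$ (approximate continuity does not force local constancy on a curve); it is most cleanly justified from the flow structure of Remark~\ref{R_representative}, where the characteristic $t\mapsto\tilde X(t,x)$ traverses $C^j_h$ monotonically. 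Second, the trace identification you flag -- matching the one-sided 1D traces of the slice $b_i^{(t)}$ at the endpoints of $E^{(t)}$ with the 2D approximate values at the corresponding boundary points -- is precisely the same BV-slicing fact packaged in the paper's Lemma~\ref{L_Dg}; after parametrizing the curve over each slab where it is graph-like (as in Lemma~\ref{L_interpolation}), one could apply Lemma~\ref{L_Dg} directly and avoid invoking that point as folklore. So this is not a gap, but it deserves to be made explicit rather than relegated to a citation, exactly as the paper does via Lemma~\ref{L_Dg}.
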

	\begin{proof}
	We can assume without loss of generality that $M\ge 1$.
	Since $\TV \gamma'\le M$, then there exist an absolute constant $c_1>0$ and $[a,b]\subset [0,l_\gamma]^*$ such that
	\begin{equation*}
	\TV_{(a,b)}\gamma' \le 1 \qquad \mbox{and} \qquad b-a\ge \frac{c_1}{M}l_\gamma.
	\end{equation*}
	In particular there exists $e\in \mathbb{S}^1$ such that for $\L^1$-a.e. $s\in (a,b)$ it holds
	\begin{equation*}
	\gamma'(s)\cdot e \ge \frac{\sqrt 2}{2}
	\end{equation*}
	so that
	\begin{equation*}
	(\gamma(b)-\gamma(a))\cdot e \ge \frac{\sqrt{2}c_1}{2M}l_\gamma.
	\end{equation*}
	Let $n=e^\perp$ if $\deg(\gamma)=1$ and $n=-e^\perp$ if  $\deg(\gamma)=-1$ so that by Theorem \ref{T_Jordan} for every $s\in (a,b)$
	there exists $t>0$ such that for every $t'\in(0,t)$ it holds $ \gamma(s)+t'n\in \Int(C^j_h)$. We denote by
	$g(s)\in [0,l_\gamma]^*$ the unique point for which
	\begin{equation*}
	\gamma(g(s))=\gamma(s)+ \bar t n, \qquad \mbox{where}\qquad \bar t:=\sup\{t>0: \gamma(s)+t'n\in \Int(C^j_h)\,  \forall t'\in (0,t)\}.
	\end{equation*} 
	By an elementary topological consideration we have that for $\L^1$-a.e. $s \in (a,b)$ it holds $\gamma'(g(s))\cdot e \le 0$.
	Since $|b\circ \gamma|\ge c_S$ it follows that for $\L^1$-a.e. $s \in (a,b)$ it holds
	\begin{equation*}
	|b(\gamma(s))-b(\gamma(g(s))|\ge \frac{\sqrt{2}}{2}c_S.
	\end{equation*}
	By Lemma \ref{L_Dg} this implies that
	\begin{equation*}
	|Db|(\Int(C^j_h))\ge \int_a^b |b(\gamma(s)-b(\gamma(g(s))|ds \ge \frac{1}{2}c_S(b-a)\ge \frac{c_1c_S}{2M}l_\gamma. \qedhere
	\end{equation*}
	\end{proof}

	\begin{proof}[Proof of Proposition \ref{P_cov}]
	We first construct the change of variable and then we check all the properties. 
	
	\emph{Step 1}. Construction of $\tilde X_{1,2}$.
	Consider the piecewise affine interpolating curve $\gamma_K$ obtained by Lemma \ref{L_interpolation} with $\gamma=\gamma_1$.
	Then the following properties hold:
	\begin{enumerate}
		\item	$\tilde y_K(0)=0$ and $\tilde y_K(l_{h_1})=l_{\gamma_K}$;
		\item for every $i=1,\ldots, K$ it holds $\gamma_1(s_i)=\gamma_K(\tilde y_k(s_i))$;
		\item for every $i=1,\ldots,K-1$ and for every $y\in (\tilde y_K(s_i),\tilde y_K(s_{i+1}))$ it holds 
		\begin{equation*}
		\dot\gamma_K(y)=e_i, \qquad \mbox{where}\qquad  e_i=\frac{\gamma_1(s_{i+1})-\gamma_1(s_i)}{|\gamma_1(s_{i+1})-\gamma_1(s_i)|};
		\end{equation*}
		\item  for every $i=1,\ldots,K-1$ and for $\L^1$-a.e. $s\in (s_i,s_{i+1})$ it holds
		\begin{equation}\label{E_cond_4}
		\dot\gamma_1(s)\cdot e_i \ge \frac{2}{\sqrt 5}|\dot\gamma_1(s)|.
		\end{equation}
	\end{enumerate}
	Up to refining the choice of the mesh points $s_i$ we can also assume that
	\begin{enumerate}
		\item[(5)] $\max\{s_{i+1}-s_i: i=1,\ldots,K-1\}\le 2 |h_1-h_2|$.
	\end{enumerate}
	Denote by $F:=[0,l_{\gamma_K}]^*\setminus \mathcal B_{\bar a}(\gamma_K)$ where $\mathcal B_{\bar a}(\gamma_K)$ is obtained by Lemma \ref{L_overlapping} with 
	$\bar a=(\frac{\sqrt 5}{c_S}+1)|h_1-h_2|$ and set $\tilde D:=(\tilde y_K)^{-1}(F)$.
	By Lemma \ref{L_overlapping} it holds
	\begin{equation*}
	l_{\gamma_K}-\L^1(F)\le \underline c_0(1+L)\bar a \TV \gamma_K' \le \underline c_0(1+L)\bar a\TV \gamma_1' \le \underline c_0(1+L)\bar a M.
	\end{equation*}
	Thanks to Property (4) above we deduce that
	\begin{equation}\label{E_est_tildeD}
	l_{\gamma_1}-\L^1(\tilde D)\le \frac{\sqrt 5}{2} \underline c_0(1+L)\bar aM.
	\end{equation} 
	Let $\tilde D'\subset \tilde D$ be defined by
	\begin{equation*}\label{E_D2}
	\begin{split}
	\tilde D':=\bigg\{s'\in \tilde D:& \exists i=1,\ldots,K, \exists s''\in [0,l_{h_2}]^*, \exists t\in\left[0, \frac{\sqrt 5}{c_S}|h_1-h_2|\right]: \\
	& s'\in (s_i,s_{i+1}), \gamma_1(s')+tn_i=\gamma_2(s''), \quad \mbox{and} \quad \forall t'\in (0,t), \, \gamma_1(s')+t'n_i \in A \bigg\}.
	\end{split}
	\end{equation*}
	We now estimate $|\tilde D\setminus \tilde D'|$. We distinguish two cases:
	\begin{equation*}
	\tilde D\setminus \tilde D'= B_1 \cup B_2,
	\end{equation*}
	where
	\begin{equation*}
	B_1:=\left\{s'\in \tilde D: \exists i=1,\ldots,K: s'\in (s_i,s_{i+1}) \quad \mbox{and} \quad \forall t \in \left[0, \frac{\sqrt 5}{c_S}|h_1-h_2|\right], \, H(\gamma_1(s')+tn_i)>h_2\right\}
	\end{equation*}
	and
	\begin{equation*}
	\begin{split}
	B_2:=\bigg\{s'\in \tilde D\setminus \tilde D': & \exists i=1,\ldots,K, \exists t \in \left[0, \frac{\sqrt 5}{c_S}|h_1-h_2|\right]: \\
	& s'\in (s_i,s_{i+1}), H(\gamma_1(s')+tn_i)=h_2\quad \mbox{and} \quad \forall t'\in (0,t), \, \gamma_1(s')+t'n_i\in A\bigg\}.
	\end{split}
	\end{equation*}
	We first estimate the measure of $B_2$. By definition there exists $\gamma_1(s')+tn_i\in C^{j'}_{h_2}$ with $j'\ne j_2$. Therefore $C^{j'}_{h_2}\subset A$.
	Since $C^{j'}_{h_2}\cap C^{j_2}_{h_2}=\emptyset = C^{j'}_{h_2}\cap C^{j_1}_{h_1}$ it follows that $\Int( C^{j'}_{h_2})\subset A$.
	By Lemma \ref{L_folding} we have that there exists $c=c(c_s,M)>0$ such that
	\begin{equation*}
	c\H^1(C^{j'}_{h_2}) \le |Db|(\Int( C^{j'}_{h_2}))\le |Db|(A).
	\end{equation*}
	Moreover by \eqref{E_cond_4} it follows that 
	\begin{equation}\label{E_est_B2}
	\L^1(B_2) \le \frac{\sqrt{5}}{2}\sum_{j'\in J'} \H^1(C^{j'}_{h_2}) \le \frac{\sqrt 5}{2c}|Db|(A),
	\end{equation}
	where $J'$ denotes the set of indexes $j'\ne j_2$ such that $C^{j'}_{h_2}\subset A$.
	
	Now we estimate $\L^1(B_1)$: for every $i=1,\ldots, K$ let $\tilde  f_{1,i}:\tilde y_K([s_i,s_{i+1}])\to \R$ be defined by the following relation:
	\begin{equation*}
	\tilde f_{1,i}(y)=(\gamma_1((\tilde y_K)^{-1}(y))-\gamma_1(s_i))\cdot n_i.
	\end{equation*}
	Let $s\in B_1\cap (s_i,s_{i+1})$. 
	Then for every $t\in (0,\frac{\sqrt 5}{c_S}|h_1-h_2|])$ it holds $H(\gamma_1(s)+tn_i)>h_2$.
	In particular there exists $\tilde t \in (0,\frac{\sqrt 5}{c_S}|h_1-h_2|])$ such that $\nabla H(\gamma_1(s)+\tilde tn_i)\cdot n_i \ge -\frac{c_S}{\sqrt 5}$. Since $\nabla H(\gamma_1(s))\cdot n_i\le -\frac{2c_S}{\sqrt 5}$ we have
	\begin{equation*}
	\mathrm{Osc}_{t\in [0,\frac{\sqrt 5}{c_S}|h_1-h_2|]} \nabla H(\gamma_1(s)+ tn_i)\cdot n_i \ge \frac{c_S}{\sqrt 5}.
	\end{equation*}
%	\begin{enumerate}
%	\item there exists $t\in [0,\frac{\sqrt 5}{2c_S}|h_1-h_2|]$ such that $H(\gamma_1(s)+tn_i)=h_1$ and for every $t'\in [0,t]$ it holds $H(\gamma_1(s)+t'n_i)\in(h_1,h_2)$;
%	\item for every $t\in [0,\frac{\sqrt 5}{2c_S}|h_1-h_2|]$ it holds $H(\gamma_1(s)+tn_i)\in(h_1,h_2)$.
%	\end{enumerate}
	Applying Lemma \ref{L_Dg} with $D=B_1\cap (s_i,s_{i+1})$, $f_1= \tilde f_{1,i}$ and $f_2=\tilde f_{1,i}+\frac{\sqrt 5}{c_S}|h_1-h_2|$ and summing the contributions for
	$i=1,\ldots,K$ we get that
	\begin{equation}\label{E_est_B1}
	\L^1(B_1)\le \frac{\sqrt 5}{c_S}|Db|(A).
	\end{equation}
	By \eqref{E_est_B2} and \eqref{E_est_B1} we obtain
	\begin{equation}\label{E_est_tildeD'}
	\L^1(\tilde D \setminus \tilde D') \le \left(\frac{\sqrt 5}{2c}+\frac{\sqrt 5}{c_S} \right)|Db|(A).
	\end{equation}
	
	For every $i=1,\ldots, K$ and every $s'\in \tilde D'\cap (s_i,s_{i+1})$ let $\tilde X_{1,2}(s')\in [0,l_{h_2}]^*$ and $d(s')>0$ be the unique values for which
	\begin{equation*}
	\gamma_2(\tilde X_{1,2}(s'))=\gamma_1(s')+d(s')n_i \qquad \mbox{and} \qquad H(\gamma_1(s_1)+d'n_i)\in (h_1,h_2) \quad \forall d'\in (0,d(s')).
	\end{equation*}
	Now we are in position to define 	and $\tilde f_{2,i}:\tilde y_K([s_i,s_{i+1}])\cap \tilde D'\to \R$ by 
	\begin{equation*}
	\tilde f_{2,i}(y)=(\gamma_2(\tilde X_{1,2}((\tilde y_K)^{-1}(y)))-\gamma_1(s_i))\cdot n_i= \tilde f_{1,i}(y) + d((\tilde y_K)^{-1}(y))).
	\end{equation*}
	Notice that from Property (4) above we have that $\tilde f_{1,i}$ is 1/2-Lipschitz for every $i=1,\ldots,K$.
	Therefore, thanks to Property (5), it holds $\tilde f_{1,i}\le |h_1-h_2|$.
		
	Moreover notice that for $i=1,\ldots,K$ the sets
	\begin{equation*}
	\tilde E_i:= \bigcup_{s\in \tilde D' \cap (s_i,s_{i+1})}\bigcup_{d'\in [0,d(s)]}\{\gamma_1(s)+d'n_i\} \subset 
	\bigcup_{s\in \tilde D'\cap (s_i,s_{i+1})}\bigcup_{t\in [-a,a]} \{\gamma_K(\tilde y_K(s))+tn_i\}.
	\end{equation*}
	By Lemma \ref{L_overlapping} the sets 
	\begin{equation*}
	\left\{ \bigcup_{t\in[-a,a]}\{\gamma_K(\tilde y_K(s))+tn_i\}\right\}_{s\in \tilde D'}
	\end{equation*}
	are pairwise disjoint, therefore also $\{\tilde E_i\}_{i=1}^K$ are pairwise disjoint.
	
	By construction we have that $\tilde X_{1,2}$ enjoys the properties (3), (4), (5) and (6) of the statement: 
	the monotonicity of $\tilde X_{1,2}$ in the sense of the statement follows by an elementary topological argument since $\gamma_1$ and $\gamma_2$ 
	have the same degree and the sets $\{\tilde E_i\}_{i=1}^K$ are pairwise disjoint subsets of $A$.
		
	\emph{Step 2}. We define the function $X_{1,2}$ as an appropriate restriction of $\tilde X_{1,2}$ so that properties (1) and (2) are satisfied. Properties from (3) to (7) are
	obviously inherited by $\tilde X_{1,2}$.  
		
	Fix $i=1,\ldots, K-1$ and let
	\begin{equation*}
	\tilde D_3^i:= \{s\in \tilde D'\cap (s_i,s_{i+1}): \dot \gamma_2(\tilde X_{1,2}(s))\cdot e_i \ge \frac{\sqrt 2}{2}| \dot \gamma_2(\tilde X_{1,2}(s))|\}.
	\end{equation*}
	We claim that for every $i=1,\ldots, K$ there are finitely many connected components of $\tilde X_{1,2}(\tilde D_3^i)$ which contain a point $s$ for which 
	\begin{equation*}
	\dot\gamma_2(s)\cdot e_i \ge \frac{4}{3\sqrt 3}|\dot\gamma_2(s)|.
	\end{equation*}
	Let $i=1,\ldots, K$ and consider two different connected components $I$, $I'$ as above of $\tilde X_{1,2}(\tilde D_3^i)$.
	Let $s\in I$ and $s'\in I'$ be such that
	\begin{equation*}
	\dot\gamma_2(s)\cdot e_i \ge \frac{4}{3\sqrt 3}|\dot\gamma_2(s)| \qquad \mbox{and} \qquad \dot\gamma_2(s')\cdot e_i \ge \frac{4}{3\sqrt 3}|\dot\gamma_2(s')|.
	\end{equation*}
	Assume without loss of generality that $s<s'$. Since they belong to two different connected components of $\tilde X_{1,2}(\tilde D_3^i)$, there exists $\tilde s\in (s,s')$
	such that 
	\begin{equation*}
	\dot\gamma_2(s)\cdot e_i \le \frac{\sqrt 2}{2} |\dot\gamma_2(s)|.
	\end{equation*}
	In particular $\mu_\gamma([s,s'])\ge 2 \left(\frac{4}{3\sqrt 3}-\frac{\sqrt 2}{2} \right)c_S$. Since $\gamma_2$ has finite turn there can be at most finitely many of such connected
	components.
	We denote by $(I^i_{2,j})_{j=1}^{N_i}$ the connected components of $\tilde X_{1,2}(\tilde D_3^i)$ and we denote by $D_1^i$ their union.
	Up to an arbitrarily small restriction we can assume that $I^i_{2,j}=[s^{i,-}_{2,j},s^{i,-}_{2,j}]$ is closed.
	For every $j=1,\ldots, N_i$ we denote by $I^i_{1,j}=\tilde X_{1,2}^{-1}(I^i_{2,j})=[s^{i,-}_{1,j},s^{i,-}_{1,j}]$.
	Accordingly we define $Y^i_{1,j}:I^i_{1,j}\to [0,l^i_j]$, $Y^i_{2,j}:I^i_{2,j}\to [0,l^i_j]$ and $f^i_{1,j},f^i_{2,j}:[0,l^i_j]\to \R$ so that
	\begin{equation}\label{E_Yf1}
		\gamma_1(s)=\gamma_1(s_{1,j}^{i,-})+Y^i_{1,j}(s)e_i+f^i_{1,j}(Y^i_{1,j}(s))e_i^\perp \qquad \forall s\in I^i_{1,j}
	\end{equation}
		and 
	\begin{equation}\label{E_Yf2}
		\gamma_2(s)=\gamma_1(s_{1,j}^{i,-})+Y^i_{2,j}(s)e_j+f_{2,j}(Y^i_{2,j}(s))e_i^\perp\qquad \forall s\in I^i_{2,j}.
	\end{equation}
	We denote by $D_1=\bigcup_{i=1}^KD_1^i$, $N=\sum_{i=1}^KN_i$ and we define $X_{1,2}$ as the restriction of $\tilde X_{1,2}$ to $D_1$. 
	Accordingly we define $I_{1,j},I_{2,j},Y_{1,j},Y_{2,j}$ as $j=1,\ldots,N$.

	Notice that \eqref{E_Y_{1,j}} follows immediately by taking the scalar product of \eqref{E_Yf1} and \eqref{E_Yf2} with $e_i$ and computing the derivative with respect to $s$.

	Now we check that the first inequality in \eqref{E_est_h_1} is satisfied: since for every $s\in \tilde D'\setminus D_1$ it holds
	\begin{equation*}
	\dot\gamma_1(s)\cdot e_i \ge \frac{2}{\sqrt 5}|\dot\gamma_1(s)|, \qquad 
	\dot\gamma_2(\tilde X_{1,2}(s))\cdot e_i \le \frac{4}{3\sqrt 3}|\dot\gamma_1(\tilde X_{1,2}(s))|
	\end{equation*}
	and $ |\dot\gamma_1(s)|,|\dot\gamma_2(\tilde X_{1,2}(s))| \ge c_S$, then there exists an absolute constant $\underline c_3>0$ such that
	\begin{equation*}
	|\dot\gamma_2(\tilde X_{1,2}(s))-\dot\gamma_1(s)|\ge \underline c_3 c_S.
	\end{equation*}
	Integrating on $\tilde D'\setminus D_1$ we get that
	\begin{equation}\label{E_est_D}
	|\tilde D'\setminus D|\le \frac{\sqrt 5}{2\underline c_3 c_S}|Db|(A). 
	\end{equation}
	From \eqref{E_est_tildeD}, \eqref{E_est_tildeD'} and \eqref{E_est_D} it follows that there exist $\tilde c_3=\tilde c_3(c_S,M,L)>0$ and $\tilde c_4=\tilde c_4(c_S,M)>0$ such that
	\begin{equation}\label{E_estD1}
	\L^1([0,l_1]^*\setminus D_1) \le \tilde c_3 |h_1-h_2| + \tilde c_4 |Db|(A),
	\end{equation}
	and this proves the first inequality in \eqref{E_est_h_1}.
	
	It remains to prove the second inequality in \eqref{E_est_h_1}.
	First we compare the lengths of $\gamma_1\llcorner I_j$ and of $\gamma_2\llcorner X_{1,2}(I_j)$.
	\begin{equation*}
	\begin{split}
	l(\gamma_1\llcorner I_j)=s_{1,j}^+-s_{1,j}^-=&\int_0^{l_j}\sqrt{1+|f_{1,j}'|^2(y)}dy \\
	l(\gamma_2\llcorner X_{1,2}(I_j))=&\int_0^{l_j}\sqrt{1+|f_{2,j}'|^2(y)}dy
	\end{split}
	\end{equation*}
	so that 
	\begin{equation*}
	\begin{split}
	|l(\gamma_2\llcorner X_{1,2}(I_j))-l(\gamma_1\llcorner I_j)|\le &~ \int_0^{l_j}||f_{2,j}'|-|f_{1,j}'||dy \\
	\le &~ \frac{\underline c_4}{c_S}\int_0^{l_j}|b(\gamma_2(Y_{2,j}^{-1}(y)))-b(\gamma_1(Y_{1,j}^{-1}(y)))|dy \\
	\le &~ \frac{\underline c_4}{c_S} |Db|(E_j)
	\end{split}
	\end{equation*}
	for an absolute constant $\underline c_4>0$,
	where we used in the first line that $v\to \sqrt{1+v^2}$ is 1-Lipschitz, in the second line that $\gamma_1,\gamma_2$ are $(c_S,M,L)$-admissible and finally Lemma \ref{L_Dg}.
	Since the sets $(E_j)_{j=1}^N$ are pairwise disjoint, summing on $j=1,\ldots,N$ we get
	\begin{equation}\label{E_DeltaLj}
	\sum_{j=1}^N|l(\gamma_2\llcorner X_{1,2}(I_j))-l(\gamma_1\llcorner I_j)|\le \frac{\underline c_4}{c_S}|Db|(A).
	\end{equation} 
	From \eqref{E_estD1} and \eqref{E_DeltaLj}, we have
	\begin{equation}\label{E_369}
	\begin{split}
	l(\gamma_2) \ge &\sum_{j=1}^N l(\gamma_2\llcorner I_j)\\
	\ge &\sum_{j=1}^Nl(\gamma_1\llcorner I_j)-\frac{\underline c_4}{c_S}|Db|(A)\\
	\ge &~ l(\gamma_1)- \L^1([0,l_{h_1}]\setminus D)-\frac{\underline c_4}{c_S}|Db|(A) \\
	\ge &~ l(\gamma_1)-  \tilde c_3 |h_1-h_2| -\tilde c_4|Db|(A)-\frac{\underline c_4}{c_S}|Db|(A) .
	\end{split}
	\end{equation}
	We now observe that exactly the same argument of this proof up to now can be repeated exchanging the role of $\gamma_1$ and $\gamma_2$. 
	Therefore from \eqref{E_369} and the analogous inequality exchanging $\gamma_1$ and $\gamma_2$ we get
	\begin{equation}\label{E_DeltaL}
	|l(\gamma_2)-l(\gamma_1)|\le \tilde c_3|h_1-h_2| +\left(\tilde c_4+\frac{\underline c_4}{c_S}\right)|Db|(A).
	\end{equation}
	From \eqref{E_DeltaLj} and \eqref{E_DeltaL} we get
	\begin{equation*}
	\begin{split}
	l(\gamma_2)-\sum_{j=1}^Nl(\gamma_2\llcorner I_{2,j}) \le &~ l(\gamma_2)-\sum_{j=1}^Nl(\gamma_1\llcorner I_{1,j})+\frac{\underline c_4}{c_S}|Db|(A) \\
	\le &~ l(\gamma_2)-l(\gamma_1)+ \L^1([0,l_{h_1}]\setminus D_1)+\frac{\underline c_4}{c_S}|Db|(A) \\
	\le &~ 2 \tilde c_3|h_1-h_2| +2\left(\tilde c_4+\frac{\underline c_4}{c_S}\right)|Db|(A).
	\end{split}
	\end{equation*}
	Setting $\tilde c_1=2\tilde c_3$ and $\tilde c_2=2\left(\tilde c_4+\frac{\underline c_4}{c_S}\right)$, this proves \eqref{E_est_h_1} and concludes the proof.
	\end{proof}

\section{Lusin-Lipschitz regularity of the flow}\label{S_Lip}
	We first  estimate the difference of the flow starting from two points belonging to the same trajectory.
	\begin{lemma}\label{L_same}
	Let $\gamma$ be a $(c_S,M,L)$-admissible curve. Then for every $t\ge 0$ and every $s_1,s_2\in [0,l_\gamma]^*$ it holds
	\begin{equation}\label{E_same}
	|X(t,\gamma(s_2))-X(t,\gamma(s_1))|\le \frac{\|b\|_{L^\infty}^2}{c_S}L|\gamma(s_2)-\gamma(s_1)|.
	\end{equation}
	\end{lemma}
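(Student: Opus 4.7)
The plan is to use the explicit Lagrangian description of the flow on a cycle from Remark \ref{R_representative}. Write $X(t,\gamma(s))=\gamma(\phi_t(s))$ where $\phi_t:[0,l_\gamma]^*\to[0,l_\gamma]^*$ is the homeomorphism of the circle defined implicitly by
\begin{equation*}
\int_s^{\phi_t(s)}\frac{du}{|b(\gamma(u))|}\equiv t \pmod{T}, \qquad T:=\int_0^{l_\gamma}\frac{du}{|b(\gamma(u))|},
\end{equation*}
where the integrals on arcs of the cycle are taken in the orientation of the flow. This is well-defined thanks to admissibility, since $c_S\le |b\circ\gamma|\le \|b\|_{L^\infty}$ makes the integrand positive and bounded.

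The key observation is that the cyclic order is preserved by $\phi_t$, and the time needed for the flow to travel between two points on the cycle depends only on those two points, not on the time of observation. More precisely, since $\phi_t\circ\phi_\tau=\phi_{t+\tau}$ and $\phi_t$ is an order-preserving circle map, for any $s_1,s_2\in[0,l_\gamma]^*$ the arc from $s_1$ to $s_2$ (in the orientation of the flow, of length $\Delta s$) is mapped to the arc from $\phi_t(s_1)$ to $\phi_t(s_2)$ (of length $\Delta s'$), and the travel times along both arcs coincide:
\begin{equation*}
\int_{\phi_t(s_1)}^{\phi_t(s_2)}\frac{du}{|b(\gamma(u))|}=\int_{s_1}^{s_2}\frac{du}{|b(\gamma(u))|}.
\end{equation*}
Using the uniform bounds $c_S\le |b\circ\gamma|\le \|b\|_{L^\infty}$ on both sides yields
\begin{equation*}
\frac{\Delta s'}{\|b\|_{L^\infty}}\le \frac{\Delta s}{c_S}, \qquad \text{i.e.}\qquad \Delta s'\le \frac{\|b\|_{L^\infty}}{c_S}\Delta s.
\end{equation*}
The same argument applied to the complementary arc shows that \emph{both} arcs on the cycle between $\phi_t(s_1)$ and $\phi_t(s_2)$ are bounded by $\|b\|_{L^\infty}/c_S$ times the corresponding arc between $s_1,s_2$; in particular the shorter arc satisfies this bound.

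To conclude, I combine this with the two remaining ingredients in the admissibility of $\gamma$. Since $\gamma$ is unit-speed (hence $1$-Lipschitz from $[0,l_\gamma]^*$ to $\R^2$),
\begin{equation*}
|X(t,\gamma(s_2))-X(t,\gamma(s_1))|=|\gamma(\phi_t(s_2))-\gamma(\phi_t(s_1))|\le \Delta s'\le \frac{\|b\|_{L^\infty}}{c_S}\Delta s,
\end{equation*}
and since $\Lip(\gamma^{-1})\le L$, one has $\Delta s\le L|\gamma(s_2)-\gamma(s_1)|$. Multiplying the bounds and noting that $\|b\|_{L^\infty}\ge c_S$ (so the stated constant $\|b\|_{L^\infty}^2L/c_S$ dominates the sharper $\|b\|_{L^\infty}L/c_S$ I would obtain) gives \eqref{E_same}. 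The only nontrivial point in the argument is establishing the equality of travel times across the two corresponding arcs; this is essentially a bookkeeping exercise on the circle, and is the main (mild) obstacle to make rigorous when $t\ge T$ so that the flow has wrapped around one or more full periods.
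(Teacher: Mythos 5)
Your proof is correct and follows essentially the same route as the paper: both arguments rest on the conservation of travel time $\int 1/|b\circ\gamma|$ along corresponding arcs of the cycle, bound that integral from above and below by $c_S\le|b\circ\gamma|\le\|b\|_{L^\infty}$ to compare the arc lengths before and after time $t$, and then invoke the Lipschitz bounds on $\gamma$ and $\gamma^{-1}$. (The paper phrases the conservation as a vanishing time-derivative and a max/min comparison, but it is the same computation; also, both you and the paper actually obtain the sharper constant $\|b\|_{L^\infty}L/c_S$, and your remark that $\|b\|_{L^\infty}\ge c_S$ forces the stated constant to dominate is not quite right---one would need $\|b\|_{L^\infty}\ge 1$---but this is a harmless slip shared with the paper's own justification.)
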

	\begin{proof}
	Since $\partial_tX(t,\gamma(s))=b(X(t,\gamma(s)))$, then
	\begin{equation*}
	\frac{d}{dt} \int_{\gamma^{-1}(X(t,\gamma(s_1)))}^{\gamma^{-1}(X(t,\gamma(s_2)))}\frac{1}{|b(\gamma(s))|}ds=0.
	\end{equation*}
	Moreover for $\L^1$-a.e. $s \in [0,l_\gamma]^*$ it holds
	\begin{equation*}
	c_S\le |b(\gamma(s))|\le \|b\|_{L^\infty}.
	\end{equation*}
	Therefore
	\begin{equation*}
	\begin{split}
	\max_{t\ge 0} |\gamma^{-1}(X(t,\gamma(s_2)))-\gamma^{-1}(X(t,\gamma(s_1)))|\le &~ 
	\|b\|_{L^\infty}\int_{\gamma^{-1}(X(t_{\max},\gamma(s_1)))}^{\gamma^{-1}(X(t_{\max},\gamma(s_2)))} \frac{1}{|b(\gamma(s))|}ds\\
	= &~ \|b\|_{L^\infty}\int_{\gamma^{-1}(X(t_{\min},\gamma(s_1)))}^{\gamma^{-1}(X(t_{\min},\gamma(s_2)))} \frac{1}{|b(\gamma(s))|}ds \\
	\le &~ \frac{\|b\|_{L^\infty}}{c_S}\min_{t\ge 0} |\gamma^{-1}(X(t,\gamma(s_2)))-\gamma^{-1}(X(t,\gamma(s_1)))|.
	\end{split}
	\end{equation*}
	Since $\gamma^{-1}$ is $L$-Lipschitz, then for every $t\ge 0$ it holds
	\begin{equation*}
	|\gamma^{-1}(X(t,\gamma(s_2)))-\gamma^{-1}(X(t,\gamma(s_1)))|\le \frac{\|b\|_{L^\infty}}{c_S}|s_2-s_1|\le  \frac{\|b\|_{L^\infty}}{c_S}L|\gamma(s_2)-\gamma(s_1)|, 
	\end{equation*}
	which immediately implies \eqref{E_same} since $\gamma$ is $\|b\|_{L^\infty}$-Lipschitz.
	\end{proof}
	
	Let $\gamma$ be a $(c_S,M,L)$-admissible curve and let $E\subset [0,l_{\gamma}]^*$. Then we denote by
	\begin{equation*}
	T(\gamma\llcorner E):=\int_E\frac{1}{|b(\gamma(s))|}ds
	\end{equation*}
	the amount of time that a trajectory in $E$ spends on $\gamma(E)$ every period.
	In particular if $E=[0,l_\gamma]^*$ we simply denote by
	\begin{equation*}
	T(\gamma):=\int_0^{l_\gamma}\frac{1}{|b(\gamma(s))|}ds
	\end{equation*}
	the period of the trajectories on $\gamma([0,l_\gamma]^*)$.

	By means of Proposition \ref{P_cov} we can compare the above quantities on two different level sets.
\begin{lemma}\label{L_DeltaT}
	Let $\gamma_1,\gamma_2, A$ be as in Proposition \ref{P_cov}.
	Let moreover $F^1\subset D_1$ and $F^2=X_{1,2}(F^1)\subset D_2$, then 
	\begin{equation*}
	|T(\gamma_1\llcorner F^1)-T(\gamma_2\llcorner F^2)|\le \frac{1}{c_S^2}|Db|(A).
	\end{equation*}
	\end{lemma}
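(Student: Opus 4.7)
The plan is to reduce the difference of the two integrals $T(\gamma_1\llcorner F^1)$ and $T(\gamma_2\llcorner F^2)$ to a pointwise comparison of $b$ on two nearby graphs, and then invoke the slicing Lemma \ref{L_Dg} applied to the BV field $b$.

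First I would decompose $F^1=\bigsqcup_j (F^1\cap I_{1,j})$ and correspondingly $F^2=X_{1,2}(F^1)=\bigsqcup_j (F^2\cap I_{2,j})$, using the intervals provided by Proposition \ref{P_cov}. On each $I_{1,j}$ I perform the change of variable $y=Y_{1,j}(s_1)$; by \eqref{E_Y_{1,j}} we have $dy = \frac{b(\gamma_1(s_1))\cdot e_j}{|b(\gamma_1(s_1))|}\,ds_1$, which (after fixing the sign of $b\cdot e_j$, which is constant on $I_{1,j}$ since $b$ is tangent to $\gamma_1$ and $\dot\gamma_1\cdot e_j\ge \sqrt 2/2$) gives
\begin{equation*}
\int_{F^1\cap I_{1,j}}\frac{ds_1}{|b(\gamma_1(s_1))|} = \int_{Y_{1,j}(F^1\cap I_{1,j})}\frac{dy}{|b(\gamma_1(Y_{1,j}^{-1}(y)))\cdot e_j|}.
\end{equation*}
Performing the analogous change $y=Y_{2,j}(s_2)$ on $I_{2,j}$ and using $Y_{2,j}\circ X_{1,2}=Y_{1,j}$, the image of the second integration domain coincides with $Y_{1,j}(F^1\cap I_{1,j})$, so both integrals live on the same subset of $[0,l_j]$.

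Next I would estimate the difference of the two integrands. Since $b=\pm|b|\dot\gamma_i$ on each level curve and $|b\circ\gamma_i|\ge c_S$, property (3) of Proposition \ref{P_cov} gives
\begin{equation*}
|b(\gamma_i(\cdot))\cdot e_j|\ge \tfrac{\sqrt 2}{2}c_S,
\end{equation*}
so by the identity $|1/a-1/b|=|a-b|/|ab|$ we get pointwise
\begin{equation*}
\left|\frac{1}{b(\gamma_1(Y_{1,j}^{-1}(y)))\cdot e_j}-\frac{1}{b(\gamma_2(Y_{2,j}^{-1}(y)))\cdot e_j}\right|\le \frac{2}{c_S^2}\bigl|b(\gamma_1(Y_{1,j}^{-1}(y)))\cdot e_j-b(\gamma_2(Y_{2,j}^{-1}(y)))\cdot e_j\bigr|.
\end{equation*}

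Finally, writing things in the orthonormal frame $(e_j,n_j)$ centered at $\gamma_1(s_{1,j}^-)$, the points $\gamma_1(Y_{1,j}^{-1}(y))$ and $\gamma_2(Y_{2,j}^{-1}(y))$ are the values at coordinate $y$ of the 1-Lipschitz graphs $f_{1,j}\le f_{2,j}$ that bound $E_j$. Applying Lemma \ref{L_Dg} to the scalar BV function $b\cdot e_j$ on $E_j$ (using $|D(b\cdot e_j)|\le|Db|$) yields
\begin{equation*}
\int_{Y_{1,j}(I_{1,j})}|b(\gamma_1(Y_{1,j}^{-1}(y)))\cdot e_j - b(\gamma_2(Y_{2,j}^{-1}(y)))\cdot e_j|\,dy\le |Db|(E_j).
\end{equation*}
Summing over $j=1,\ldots,N$ and invoking property (6) of Proposition \ref{P_cov} (the $E_j$ are pairwise disjoint and contained in $A$) gives $\sum_j |Db|(E_j)\le |Db|(A)$, and the claimed bound follows (up to an absolute constant that can be absorbed by tightening the lower bound on $|b\cdot e_j|$, or simply accepted as $2/c_S^2$). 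The only subtle point is the change of variable and keeping track of the sign of $b\cdot e_j$, which is constant on each $I_{1,j}$ by continuity of $\dot\gamma_i\cdot e_j$ along the interval; this is the main bookkeeping obstacle but presents no real difficulty.
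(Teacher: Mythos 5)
Your proof follows the paper's argument exactly: the same decomposition of $F^1$ along the intervals $I_{1,j}$, the same changes of variables $y=Y_{i,j}(s)$ using \eqref{E_Y_{1,j}} to rewrite both period integrals as integrals of $1/|b\cdot e_j|$ over the common domain $Y_{1,j}(F^1\cap I_{1,j})$, the same application of Lemma \ref{L_Dg} to the scalar $\BV$ function $b\cdot e_j$ on the region $E_j$, and the same summation over $j$ invoking the disjointness of the $E_j\subset A$. Your constant $2/c_S^2$ reflects the bound $|b\cdot e_j|\ge(\sqrt 2/2)c_S$ from property (3) of Proposition \ref{P_cov} and is in fact more careful than the $1/c_S^2$ asserted in the paper; this harmless discrepancy does not affect how the lemma is used downstream.
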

	\begin{proof}
	It holds 
	\begin{equation*}
	T(\gamma_1\llcorner F^1)= \sum_{j=1}^{N-1}T(\gamma_1\llcorner F^1_j),
	\end{equation*}
	where $F^1_j:=F^1\cap(s_{1,j}^-,s_{1,j}^+)$. We claim that 
	\begin{equation}\label{E_610}
	|T(\gamma_1\llcorner F^1_j)- T(\gamma_2\llcorner X_{1,2}(F^1_j))|\le \frac{1}{c_S^2}|Db|(E_j),
	\end{equation}
	where $E_j$ is defined in \eqref{E_def_Ej}.	
	By \eqref{E_Y_{1,j}} it holds
	\begin{equation*}
	\begin{split}
	T(\gamma_1\llcorner F^1_j)=&\int_{F^1_j}\frac{1}{|b(\gamma_1(s))|}ds \\
	= & \int_{Y_{1,j}(F^1_j)} \frac{1}{|b(\gamma_1(Y_{1,j}^{-1}(y)))|} \cdot \frac{1}{|Y_{1,j}'(y)|}dy \\
	= & \int_{Y_{1,j}(F^1_j)} \frac{dy}{|b(\gamma_1(Y_{1,j}^{-1}(y)))\cdot e_j|}.
	\end{split}
	\end{equation*}
	Since $Y_{1,j}(F^1_j)=Y_{2,j}(X_{1,2}(F^1_j))$ it holds
	\begin{equation*}
	T(\gamma_2\llcorner X_{1,2}(F^1_j))= \int_{Y_{1,j}(F^1_j)} \frac{dy}{|b(\gamma_2(X_{1,2}(Y_{1,j}^{-1}(y))))\cdot e_j|}.
	\end{equation*}
	We are in position to apply Lemma \ref{L_Dg} and we get
	\begin{equation*}
	\begin{split}
	|T(\gamma_1\llcorner F^1_j)-T(\gamma_2\llcorner X_{1,2}(F^1_j))| \le &  
	 \int_{Y_{1,j}(F^1_j)}\left| \frac{1}{|b(\gamma_1(Y_{1,j}^{-1}(y)))\cdot e_j|} - \frac{1}{|b(\gamma_2(X_{1,2}(Y_{1,j}^{-1}(y))))\cdot e_j|}\right|dy \\
	\le & \frac{1}{c_S^2} \int_{Y_{1,j}(F^1_j)}\left|  b(\gamma_1(Y_{1,j}^{-1}(y)))\cdot e_j  - b(\gamma_2(X_{1,2}(Y_{1,j}^{-1}(y))))\cdot e_j \right| dy \\
	\le & \frac{1}{c_S^2} |Db|(E_j)
	\end{split}
	\end{equation*}
	and this proves \eqref{E_610}.
	Since the sets $E_j\subset A$ are pairwise disjoint and in particular $X_{1,2}$ is one to one, it holds
	\begin{equation*}
	\begin{split}
	|T(\gamma_1\llcorner F^1)-T(\gamma_2\llcorner F^2)| = &~ \left| \sum_{j=1}^{N-1}T(\gamma_1\llcorner F^1_j)- T (\gamma_2(X_{1,2}(F^1_j))) \right| \\
	\le &~ \frac{1}{c_S^2}\sum_{j=1}^{N-1}|Db|(E_j) \\
	\le &~ \frac{1}{c_S^2}|Db|(A). \qedhere
	\end{split}
	\end{equation*}
	\end{proof}

%	
%	
%	\comme{spostare}
%	For simplicity we denote by
%	\begin{equation}
%	\tilde c_3 := \underline c_0M\left(\frac{1}{c_S}+1\right) + \frac{\underline c_1 c_M}{c_S} \qquad \mbox{and} \qquad 
%	\tilde c_4 := 2\underline c_0M\left(\frac{1}{c_S}+1\right) + 2c_M\frac{\underline c_1 +\underline c_4}{c_S}
%	\end{equation}
%	so that 
%	\begin{equation}
%	l_{h_1}-|D_1|\le \tilde c_3|h_1-h_2| \qquad \mbox{and} \qquad l_{h_2}-|D_2|\le \tilde c_4|h_1-h_2|.
%	\end{equation}
	
	In the following proposition we compare the evolutions of two points starting from different trajectories in the same setting as in Proposition \ref{P_cov}. 
	
	\begin{proposition}\label{P_Lip}
	Let $\gamma_1,\gamma_2$ and $A$ be as in the statement of Proposition \ref{P_cov}. Then there exist  $\tilde c_3=\tilde c_3(c_S,M,L,\|b\|_{L^\infty})>0$, $\tilde c_4=\tilde c_4(\|b\|_{L^\infty}, c_S,L)>0$ and $\tilde c_5=\tilde c_5(\|b\|_{L^\infty}, c_S,L)>0$ such that for every $s_1\in [0,l_{\gamma_1}]^*$, $s_2\in [0,l_{\gamma_2}]^*$ and every $t>0$ it holds
	\begin{equation}\label{E_claim_Lip}
	|X(t,\gamma_1(s_1))-X(t,\gamma_2(s_2))|\le \tilde c_3\left(1+\frac{t}{T(\gamma_{1})}\right)|h_1-h_2|+ \tilde c_4 \left(1+\frac{t}{T(\gamma_{1})}\right)|Db|(A)+ \tilde c_5|\gamma_1(s_1)-\gamma_2(s_2)|.
	\end{equation}
	\end{proposition}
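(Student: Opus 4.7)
The plan is to combine the change-of-variables $X_{1,2}$ with a time decomposition exploiting the periodicities $T_i := T(\gamma_i)$ of the two trajectories. Since the statement allows for a term $\tilde c_5|\gamma_1(s_1)-\gamma_2(s_2)|$, I first reduce to a canonical pair of starting points. If $s_1\notin D_1$, Proposition \ref{P_cov}(2) provides $s_1'\in D_1$ with $|s_1-s_1'|\le \tilde c_1|h_1-h_2|+\tilde c_2|Db|(A)$, and Lemma \ref{L_same} absorbs the resulting discrepancy on the $\gamma_1$-side; if $s_1 \in D_1$ simply set $s_1' = s_1$. Setting $s_2^*:=X_{1,2}(s_1')\in D_2$, Proposition \ref{P_cov}(4) gives $|\gamma_1(s_1')-\gamma_2(s_2^*)|\le \frac{2\sqrt 2}{c_S}|h_1-h_2|$, so that via the triangle inequality and Lemma \ref{L_same} on $\gamma_2$ the problem reduces to estimating $|X(t,\gamma_1(s_1'))-X(t,\gamma_2(s_2^*))|$, at the cost of a contribution of the form $\tilde c_5|\gamma_1(s_1)-\gamma_2(s_2)|+C(|h_1-h_2|+|Db|(A))$.

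Next I decompose $t=kT_1+r$ with $k\in\N$ and $r\in[0,T_1)$. By $T_1$-periodicity, $X(t,\gamma_1(s_1'))=X(r,\gamma_1(s_1'))$; by $T_2$-periodicity of the orbit of $s_2^*$ on $\gamma_2$ and the group property, $X(t,\gamma_2(s_2^*))=X(r,z_k)$ where $z_k:=X(k(T_1-T_2),\gamma_2(s_2^*))$. Consequently
\[
|z_k-\gamma_2(s_2^*)|\le \|b\|_{L^\infty}\cdot k|T_1-T_2|\le \tfrac{\|b\|_{L^\infty}\,t}{T_1}|T_1-T_2|.
\]
The crucial quantitative input is the period-mismatch estimate
\[
|T_1-T_2|\le \frac{|Db|(A)}{c_S^2}+\frac{2(\tilde c_1|h_1-h_2|+\tilde c_2|Db|(A))}{c_S},
\]
obtained by splitting $T_i=T(\gamma_i\llcorner D_i)+T(\gamma_i\llcorner D_i^c)$, applying Lemma \ref{L_DeltaT} with $F^1=D_1$, $F^2=D_2$ to the main piece and bounding each gap contribution by $(l_i-|D_i|)/c_S$ via Proposition \ref{P_cov}(2). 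Applying Lemma \ref{L_same} on $\gamma_2$ to $|X(r,z_k)-X(r,\gamma_2(s_2^*))|$ yields precisely the $(t/T_1)|h_1-h_2|$ and $(t/T_1)|Db|(A)$ terms of the claim.

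It remains to handle the single-winding remainder $|X(r,\gamma_1(s_1'))-X(r,\gamma_2(s_2^*))|$ for $r\in[0,T_1)$. Write $X(r,\gamma_1(s_1'))=\gamma_1(\alpha_1(r))$ and pick $\hat s\in D_1$ with $|\alpha_1(r)-\hat s|\le l_1-|D_1|$; Proposition \ref{P_cov}(4) together with Lemma \ref{L_same} bound $|\gamma_1(\alpha_1(r))-\gamma_2(X_{1,2}(\hat s))|$ by $C(|h_1-h_2|+|Db|(A))$. To compare $\gamma_2(X_{1,2}(\hat s))$ with $X(r,\gamma_2(s_2^*))$, I estimate the $\gamma_2$-travel time $T(\gamma_2\llcorner[s_2^*,X_{1,2}(\hat s)])$ against the elapsed time $r=T(\gamma_1\llcorner[s_1',\alpha_1(r)])$: using the monotonicity of $X_{1,2}$ (Proposition \ref{P_cov}(7)) to identify $X_{1,2}([s_1',\hat s]\cap D_1)=[s_2^*,X_{1,2}(\hat s)]\cap D_2$, Lemma \ref{L_DeltaT} bounds the contribution coming from the $D_i$-parts by $|Db|(A)/c_S^2$, while Proposition \ref{P_cov}(2) bounds the gap parts by $(l_1-|D_1|+l_2-|D_2|)/c_S$. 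Multiplying the resulting time-discrepancy by $\|b\|_{L^\infty}$ converts it into the required spatial bound on $\gamma_2$. The main obstacle is the careful bookkeeping of small errors through these reductions, in particular ensuring that after shifting $\alpha_1(r)$ to $\hat s\in D_1$ the monotonicity of $X_{1,2}$ correctly matches the corresponding arcs on $\gamma_1$ and $\gamma_2$ so that Lemma \ref{L_DeltaT} applies cleanly and the gap-times are controllable by Proposition \ref{P_cov}(2).
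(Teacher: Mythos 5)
Your proposal is correct and follows essentially the paper's own argument: the same reduction to $D_1$ via Lemma~\ref{L_same} and \eqref{E_est_h_1}, the same use of \eqref{E_max_dist} for the pointwise distance $|\gamma_1-\gamma_2\circ X_{1,2}|$, the same period-mismatch estimate obtained from Lemma~\ref{L_DeltaT} together with the gap bound \eqref{E_est_h_1}, and the same appeal to the monotonicity of $X_{1,2}$. The only difference is bookkeeping: you decompose $t=kT_1+r$ and convert the accumulated period mismatch into a spatial drift $z_k=X(k(T_1-T_2),\gamma_2(s_2^*))$ along $\gamma_2$ (then reabsorbed by Lemma~\ref{L_same}), whereas the paper introduces the auxiliary times $t_1,t_2$ and estimates $|t_1-t_2|\le K|T_1-T_2|+|t_1'-t_2'|$ directly; both routes yield the same linear-in-$K$ bound.
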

	\begin{proof}
	Denote by $x_1=\gamma_1(s_1)$ and $x_2=\gamma_2(s_2)$. Moreover let $s_1'\in D_1$ be such that $|s_1'-s_1|\le l_{\gamma_1}-\L^1(D_1)$ and set $x_1'=\gamma_1(s_1')$.
	By the triangle inequality
	\begin{equation*}
	\begin{split}
	|X(t,\gamma_1(s_1))-X(t,\gamma_2(s_2))|\le &|X(t,\gamma_1(s_1))-X(t,\gamma_1(s_1'))| + |X(t,\gamma_1(s_1'))-X(t,\gamma_2(X_{1,2}(s_1')))| \\&+ 
	|X(t,\gamma_2(X_{1,2}(s_1')))-X(t,\gamma_2(s_2))|.
	\end{split}
	\end{equation*}
	By Lemma \ref{L_same} and \eqref{E_est_h_1} we can estimate the first term:
	\begin{equation}\label{E_est_1st}
	\begin{split}
	|X(t,\gamma_1(s_1))-X(t,\gamma_1(s_1'))|\le &~ \frac{\|b\|_{L^\infty}^2}{c_S}L|\gamma_1(s_1)-\gamma_1(s_1')| \\
	\le &~  \frac{\|b\|_{L^\infty}^3}{c_S}L|s_1-s_1'| \\
	\le &~  \frac{\|b\|_{L^\infty}^3}{c_S}L(l_{\gamma_1}-|D_1|) \\
	\le &~  \frac{\|b\|_{L^\infty}^3}{c_S}L\left(\tilde c_1 |h_1-h_2| + \tilde c_2|Db|(A)\right).
	\end{split}
	\end{equation}
	The third term can be estimated in a similar way:
	\begin{equation*}
	|X(t,\gamma_2(X_{1,2}(s_1')))-X(t,\gamma_2(s_2))|\le  \frac{\|b\|_{L^\infty}^2}{c_S}L|\gamma_2(X_{1,2}(s_1'))-\gamma_2(s_2)|.
	\end{equation*}
	Again we can split
	\begin{equation*}
	|\gamma_2(X_{1,2}(s_1'))-\gamma_2(s_2)|\le |\gamma_2(X_{1,2}(s_1'))- \gamma_1(s_1')|+| \gamma_1(s_1')- \gamma_1(s_1)| 
	+ |\gamma_1(s_1)-\gamma_2(s_2)|.
	\end{equation*}
	By \eqref{E_max_dist} it holds
	\begin{equation}\label{E_est_1}
	 |\gamma_2(X_{1,2}(s_1'))- \gamma_1(s_1')|\le \frac{2\sqrt 2}{c_S}|h_1-h_2|.
	\end{equation}
	By \eqref{E_est_1} and estimating $|\gamma_1(s_1)-\gamma_1(s_1')|$ as in \eqref{E_est_1st} we get
	\begin{equation*}
	|\gamma_2(X_{1,2}(s_1'))-\gamma_2(s_2)|\le \left(\frac{2\sqrt 2}{c_S}+\|b\|_{L^\infty} \tilde c_1\right)|h_1-h_2| + \|b\|_{L^\infty} \tilde c_2|Db|(A)+ |\gamma_1(s_1)-\gamma_2(s_2)|
	\end{equation*}
	so that 
	\begin{equation}\label{E_613}
	\begin{split}
	|X(t,&\gamma_2(X_{1,2}(s_1')))-X(t,\gamma_2(s_2))|\\  & \le \frac{\|b\|_{L^\infty}^2}{c_S}L \left(\left(\frac{2\sqrt 2}{c_S}+\|b\|_{L^\infty} \tilde c_1\right)|h_1-h_2| +\|b\|_{L^\infty} \tilde c_2 |Db|(A) +  |\gamma_1(s_1)-\gamma_2(s_2)|\right).
	\end{split}
	\end{equation}
	
	It remains to estimate $|X(t,\gamma_1(s_1'))-X(t,\gamma_2(X_{1,2}(s_1')))|$.	
	Let
	\begin{equation*}
	t_1:=\max\{t'\le t:X(t',\gamma_1(s_1'))\in\gamma_1(D_1)\}
	\end{equation*}
	and $\bar s_1\in D_1$ be such that $X(t_1,\gamma_1( s_1'))=\gamma_1(\bar s_1)$.
	Since $\gamma_1$ is $(c_S,M)$-admissible, by \eqref{E_est_h_1} it holds $|t-t_1|\le \frac{l_{\gamma_1}-|D_1|}{c_S}$.
	Moreover, let $K\in \N$ and $t_1'\in [0,T(h_1))$ be such that $t_1=KT(h_1)+t_1'$ and let $t_2\in [KT(h_2),(K+1)T(h_2))=KT(\gamma_2)+t_2'$ be the unique value for which
	\begin{equation*}
	X(t_2,\gamma_2(X_{1,2}(s_1')))=\gamma_2(X_{1,2}(\bar s_1)).
	\end{equation*}
	Now we estimate
	\begin{equation}\label{E_triang}
	\begin{split}
	|X(t,\gamma_1(s_1'))-&X(t,\gamma_2(X_{1,2}(s_1')))|\le 
	|X(t,\gamma_1(s_1'))-X(t_1,\gamma_1(s_1'))|+|X(t_1,\gamma_1(s_1'))-\gamma_2(X_{1,2}(\bar s_1))| \\
	&+|\gamma_2(X_{1,2}(\bar s_1))-X(t,\gamma_2(X_{1,2}(s_1')))|.
	\end{split}
	\end{equation}
	The first term is easily estimated by
	\begin{equation}\label{E_619}
	|X(t,\gamma_1(s_1'))-X(t_1,\gamma_1(s_1'))|\le \|b\|_{L^\infty}|t-t_1|\le \frac{\|b\|_{L^\infty}}{c_S}\big( \tilde c_1 |h_1-h_2| + \tilde c_2 |Db|(A)\big).
	\end{equation}
	By \eqref{E_max_dist} we have
	\begin{equation}\label{E_620}
	\begin{split}
	|X(t_1,\gamma_1(s_1'))-\gamma_2(X_{1,2}(\bar s_1))|= &|\gamma_1(\bar s_1)-\gamma_2(X_{1,2}(\bar s_1))| \\
	\le & \sup_{D_1}|\gamma_1-\gamma_2\circ X_{1,2}| \\
	\le &  \frac{2\sqrt 2}{c_S}|h_1-h_2|.
	\end{split}
	\end{equation}
	For the third term on the right hand side of \eqref{E_triang} we have
	\begin{equation}\label{E_est_third}
	\begin{split}
	|\gamma_2(X_{1,2}(\bar s_1))-X(t,\gamma_2(X_{1,2}(s_1')))| =& |X( t_2,\gamma_2(X_{1,2}(s_1')))-X(t,\gamma_2(X_{1,2}(s_1')))| \\
	\le & \|b\|_{L^\infty} |t-t_2|\\
	\le & \|b\|_{L^\infty} (|t-t_1|+|t_1- t_2|)\\
	\le & \|b\|_{L^\infty}  \frac{l_{\gamma_1}-|D_1|}{c_S} +   \|b\|_{L^\infty}|t_1-t_2| \\
	\le & \frac{\|b\|_{L^\infty}}{c_S}\big( \tilde c_1 |h_1-h_2| + \tilde c_2 |Db|(A)\big)  +   \|b\|_{L^\infty}|t_1-t_2|.
	\end{split}
	\end{equation}
	So it remains to estimate $|t_1-t_2|$.
	\begin{equation*}
	\begin{split}
	|t_1-t_2| = & |KT(\gamma_1)+t_1'-KT(\gamma_2)-t_2'| \\
	\le & K|T(\gamma_1)-T(\gamma_2)| + |t_1'-t_2'|.
	\end{split}
	\end{equation*}
	First we estimate
	\begin{equation*}
	\begin{split}
	|T(\gamma_1)-T(\gamma_2)| = & \left|T(\gamma_1\llcorner D_1)+T(\gamma_1\llcorner ([0,l_{\gamma_1}]\setminus D_1))-
	T(\gamma_2\llcorner D_2)-T(\gamma_2\llcorner ([0,l_{\gamma_2}]\setminus D_2))\right| \\
	\le & \left| T(\gamma_1\llcorner D_1) -T(\gamma_2\llcorner D_2) \right| + T(\gamma_1\llcorner ([0,l_{\gamma_1}]\setminus D_1) + T(\gamma_2\llcorner ([0,l_{\gamma_2}]\setminus D_2).
	\end{split}
	\end{equation*}
	Since $\gamma_1,\gamma_2$ are $(c_S,M,L)$-admissible and \eqref{E_est_h_1}, then 
	\begin{equation*}
	\begin{split}
	T(\gamma_1\llcorner([0,l_{\gamma_1}]\setminus D_1))\le& \frac{1}{c_S}\L^1([0,l_{\gamma_1}]\setminus D_1))\le \frac{1}{c_S}\big(\tilde c_1 |h_1-h_2| + \tilde c_2 |Db|(A) \big), \\
	T(\gamma_2\llcorner([0,l_{\gamma_2}]\setminus D_2))\le& \frac{1}{c_S}\L^1([0,l_{\gamma_2}]\setminus D_2))\le  \frac{1}{c_S}\big(\tilde c_1 |h_1-h_2| + \tilde c_2 |Db|(A) \big).
	\end{split}
	\end{equation*}	
	Moreover by Lemma \ref{L_DeltaT} it holds
	\begin{equation*}
	|T(\gamma_1\llcorner D_1)-T(\gamma_2\llcorner D_2)|\le \frac{1}{c_S^2} |Db|(A).
	\end{equation*}
	Similarly we can estimate $|t_1'-t_2'|$. 
	We have the following:
	\begin{equation*}
	t_1'=T(\gamma_1\llcorner [s_1',\bar s_1]), \qquad \mbox{and} \qquad t_2'=T(\gamma_2\llcorner [X_{1,2}(s_1'),X_{1,2}(\bar s_1)]).
	\end{equation*}
	Since $X_{1,2}$ is monotone (Proposition \ref{P_cov}) it holds
	\begin{equation*}
	X_{1,2}((s_1',\bar s_1)\cap D_1)=(X_{1,2}(s_1'),X_{1,2}(\bar s_1))\cap D_2
	\end{equation*}
	so that relying again upon Lemma \ref{L_DeltaT}, it holds
	\begin{equation*}
	\begin{split}
	|t_1'-t_2'| \le & \left| T(\gamma_1\llcorner((s_1',\bar s_1)\cap D_1))-T(\gamma_2\llcorner((X_{1,2}(s_1'),X_{1,2}(\bar s_1))\cap D_2))\right| \\
	& +T(\gamma_1\llcorner((s_1',\bar s_1)\setminus D_1)) + T(\gamma_2\llcorner((X_{1,2}(s_1'),X_{1,2}(\bar s_1))\setminus D_2)) \\
	\le & \frac{1}{c_S^2}|Db|(A_{h_1,h_2}) + \frac{1}{c_S}(l_{\gamma_1}-\L^1(D_1) + l_{\gamma_2}-\L^1(D_2)) \\
	\le & \frac{2\tilde c_1}{c_S}|h_1-h_2| + \left(\frac{1}{c_S^2}+ \frac{2\tilde c_2}{c_S}\right)|Db|(A).
	\end{split}
	\end{equation*}	
	Finally
	\begin{equation}\label{E_est_Deltat}
	|t_1- t_2|\le (K+1)\left[ \frac{2\tilde c_1}{c_S}|h_1-h_2| + \left(\frac{1}{c_S^2}+ \frac{2\tilde c_2}{c_S}\right) |Db|(A) \right].
	\end{equation}
	By \eqref{E_est_Deltat} we can estimate in \eqref{E_est_third}
	\begin{equation}\label{E_all_1}
	|\gamma_2(X_{1,2}(\bar s_1))-X(t,\gamma_2(X_{1,2}(s_1')))| \le  \frac{\|b\|_{L^\infty}}{c_S}  \left[ (2K+3)\tilde c_1 |h_1-h_2|+ \left( (K+1) \left( 2\tilde c_2 + \frac{1}{c_S}\right) + \tilde c_2\right) |Db|(A)\right].
	\end{equation}
	Plugging \eqref{E_all_1}, \eqref{E_619} and \eqref{E_620} into \eqref{E_triang} we get 
	\begin{equation}\label{E_all_2}
	\begin{split}
	|X(t,\gamma_1(s_1'))-X(t,\gamma_2(X_{1,2}(s_1')))|\le & \left[ \frac{\|b\|_{L^\infty}}{c_S}2(K+2)\tilde c_1+ \frac{2\sqrt 2}{c_S}\right] |h_1-h_2| \\
	&~ + \frac{\|b\|_{L^\infty}}{c_S}\left[(K+1)\left(2\tilde c_2 + \frac{1}{c_S}\right)+2\tilde c_2 \right]|Db|(A).
	\end{split}
	\end{equation}
	Finally from \eqref{E_all_2}, \eqref{E_est_1st} and \eqref{E_613} it holds
	\begin{equation}\label{E_end}
	|X(t,\gamma_1(s_1))-X(t,\gamma_2(s_2))|\le \tilde c_3(1+K)|h_1-h_2|+ \tilde c_4 (1+K)|Db|(A)+ \tilde c_5|\gamma_1(s_1)-\gamma_2(s_2)|,
	\end{equation}
	for some constants $\tilde c_3=\tilde c_3(c_S,M,L,\|b\|_{L^\infty})>0$, $\tilde c_4=\tilde c_4(\|b\|_{L^\infty}, c_S,L)>0$ and $\tilde c_5=\tilde c_5(\|b\|_{L^\infty}, c_S,L)>0$.
	By definition of $K$ it holds $K\le t/T(\gamma_1)$, therefore from \eqref{E_end} it immediately follows \eqref{E_claim_Lip} and this concludes the proof.
	\end{proof}
	
%			
%	\begin{proposition}
%	Let $H$ be a monotone Hamiltonian with compact support and assume that $b=\nabla^\perp H \in \BV(\R^2,\R^2)$.
%	Then for every $\e>0$ there exists $C=C(\e,H)>0$ and $B\subset \R^2$ with $|B|\le \e$ such that for every $t\ge 0$ it holds
%	\begin{equation}
%	\Lip (X(t)\llcorner (\R^2\setminus B))\le C(1+t).
%	\end{equation}
%	\end{proposition}
%	\begin{proof}
%	It is sufficient to show that there exists two compact sets $E_1=H^{-1}(E)$ with $E\subset R$ and $E_2\subset E_0$ such that
%	$B:=\R^2\setminus (E_1\cup E_2)$ has measure less than $\e$. Here $R\in \R$ is the set of regular values and $E_0:=\{x:\nabla H(x)=0\}$.
%	In particular $\dist(E_1,E_2)=\delta >0$ so for every $x_1 \in E_1,x_2\in E_2$ it holds
%	\begin{equation}
%	|X(t,x_1)-X(t,x_2)|\le |x_1-x_2| + 2\|b\|_{L^\infty} t \le \left(1+\frac{2\|b\|_{L^\infty} t}{\delta}\right)|x_1-x_2|.
%	\end{equation}
%	This can be done since $|(E_0\cup H^{-1}(R))^c|=0$ and the fact that the first set can be chosen as a preimage of $H$ is possible since 
%	$H_\sharp\L^2\llcorner H^{-1}(R) \perp H_\sharp\L^2\llcorner E_0$.
%	\end{proof}
%	Actually this argument can be inserted directly in the general case.
%	
%	Relying on the decomposition theorem for Lipschitz functions we are able to remove the monotonicity assumption. \comme{Find a reference}.
%		

	Building on Lemma \ref{L_same} and Proposition \ref{P_Lip} we are eventually in position to prove our main result.
	
	\begin{theorem}\label{T_Lip}
	Let $H$ be a Lipschitz Hamiltonian with compact support and assume that $b=\nabla^\perp H \in \BV(\R^2,\R^2)$. Denote by $X$ the regular Lagrangian flow of $b$.
	Then for every $\e>0$ there exists $C=C(\e,H)>0$ and $B\subset \R^2$ with $|B|\le \e$ such that for every $t\ge 0$ it holds
	\begin{equation*}
	\Lip (X(t)\llcorner (\R^2\setminus B))\le C(1+t).
	\end{equation*}
	\end{theorem}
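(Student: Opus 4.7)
The plan is to select a set $B\subset \R^2$ of $\L^2$-measure at most $\e$ such that every pair $x_1,x_2\in \R^2\setminus B$ lies on cycles amenable to comparison by Lemma \ref{L_same} (same cycle) or Proposition \ref{P_Lip} (distinct nested cycles). First, Theorem \ref{T_Bourgain} gives a full-measure set $R\subset H(\R^2)$ of \emph{good} Hamiltonian levels for which $H^{-1}(h)$ is a finite disjoint union of cycles $C^j_h$ parametrized by simple curves $\gamma^j_h$ with finite turn. Intersecting $R$ with $F_{\e'}$ for $\e'>0$ small secures $|b|\ge c_S$ on the resulting cycles; discarding a further sub-measure of cycles where $\TV (\gamma^j_h)'$ exceeds some $M$, where a cusp is present, or where $\Lip((\gamma^j_h)^{-1})$ exceeds some $L$ (which can be controlled via Lemma \ref{L_Lip_inverse}), yields a family of $(c_S,M,L)$-admissible cycles.

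Second, introduce two restrictions on the Hamiltonian level tailored to Proposition \ref{P_Lip}. Since $|Db|$ is a finite measure, the non-decreasing function $G(h):=|Db|(H^{-1}((-\infty,h]))$ is differentiable $\L^1$-a.e.; excluding the levels where $G'(h)>M_0$ for a large constant $M_0$ guarantees $|Db|(A)\le 2M_0|h_1-h_2|$ for the annular region $A$ between any two nested admissible cycles at adjacent levels $h_1,h_2$. Next, by Lemma \ref{L_folding} one has $|Db|(\Int C^j_h)\ge c\, l_{\gamma^j_h}$, and the two-sided comparison $c_S T(\gamma)\le l_\gamma\le \|b\|_{L^\infty}T(\gamma)$ shows that the union of cycles of period smaller than a threshold $T_0$ has small $\L^2$-measure. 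Taking $B$ to be the complement of the union of the surviving good cycles and tuning the parameters $\e',M,L,M_0,T_0$ gives $|B|\le \e$.

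For $x_1,x_2\in\R^2\setminus B$ on cycles $\gamma_1,\gamma_2$: if $\gamma_1=\gamma_2$ the estimate follows from Lemma \ref{L_same} with a Lipschitz constant independent of $t$. If $\gamma_1\ne\gamma_2$ are nested in one of the four configurations of Proposition \ref{P_cov} and Remark \ref{R_signs}, Proposition \ref{P_Lip} applies. The Lipschitz continuity of $H$ gives $|h_1-h_2|\le \|b\|_{L^\infty}|x_1-x_2|$; the density bound yields $|Db|(A)\le 2M_0\|b\|_{L^\infty}|x_1-x_2|$; and $T(\gamma_1)\ge T_0$. Substituting into \eqref{E_claim_Lip} gives
\begin{equation*}
|X(t,x_1)-X(t,x_2)|\le C(\e,H)(1+t)|x_1-x_2|,
\end{equation*}
which is the desired estimate.

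The main obstacle is the remaining case in which $\gamma_1,\gamma_2$ are distinct cycles with disjoint interiors. This can occur because the monotone decomposition $H=\sum_i H_i$ of Corollary \ref{C_ABC} assigns each good cycle to a unique $H_i$, and cycles from different indices $i$ need not be nested. The strategy is to insert an auxiliary good cycle $\gamma_0$ at a Hamiltonian level slightly below $\min\{h_1,h_2\}$ and containing both $\Int(\gamma_1)$ and $\Int(\gamma_2)$, so that the pairs $(\gamma_1,\gamma_0)$ and $(\gamma_2,\gamma_0)$ are both nested. Applying Proposition \ref{P_Lip} to each pair with an intermediate comparison point on $\gamma_0$, and adding the resulting estimates via the triangle inequality, one obtains the bound, provided the $|Db|$ contributions of the two annular regions are disjoint (granted by the disjointness of the $A_i$'s) and controlled by the density estimate. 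This bookkeeping is the delicate step of the argument.
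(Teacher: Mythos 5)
Your first two paragraphs set up the good cycles in a manner compatible with the paper (admissibility via Lemma \ref{L_Lip_inverse}, $|b|\ge c_S$ from Theorem \ref{T_Bourgain}(5), a period threshold, and a density bound on $|Db|$ to get $|Db|(A)\lesssim |h_1-h_2|$). But the final step — handling two cycles with disjoint interiors by inserting a common ancestor $\gamma_0$ just below $\min\{h_1,h_2\}$ — has a genuine gap, and it is precisely the step where the paper takes a different route.

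The auxiliary cycle argument fails for two reasons. First, existence: if $\gamma_1$ and $\gamma_2$ lie on two separate "hills" of $H$ joined through a saddle of depth $h_s<\min\{h_1,h_2\}$, any admissible cycle $\gamma_0$ with $\Int(\gamma_1)\cup\Int(\gamma_2)\subset\Int(\gamma_0)$ must sit at a level $h_0\le h_s$, which is not "slightly below" $\min\{h_1,h_2\}$. Second, and more fatally, the resulting bound cannot be a Lipschitz estimate: Proposition \ref{P_Lip} applied to the pair $(\gamma_1,\gamma_0)$ produces a term $\tilde c_3\bigl(1+t/T(\gamma_1)\bigr)|h_1-h_0|$, and $|h_1-h_0|\ge h_1-h_s$ is a fixed positive quantity; it does not go to zero as $|x_1-x_2|\to 0$. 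Taking $x_1,x_2$ near the saddle makes $|x_1-x_2|$ arbitrarily small while the right-hand side stays bounded below, contradicting the claimed Lipschitz control. The appeal to "disjointness of the $A_i$'s" does not repair this: the sets $A_i$ from Theorem \ref{T_ABC} are not the geometric annular regions between cycles, and the two annuli $\Int(\gamma_0)\setminus\overline{\Int(\gamma_1)}$ and $\Int(\gamma_0)\setminus\overline{\Int(\gamma_2)}$ in fact overlap.

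The paper avoids the non-nested case altogether rather than patching it. Using Theorem \ref{T_ABC} and Corollary \ref{C_ABC}, it decomposes $H=\sum_i H_i$ with each $H_i$ \emph{monotone} (every level set connected); by Corollary \ref{C_ABC} the flow of $b$ coincides with the flow of $\nabla^\perp H_i$ on $A_i$. Within a single $A_i$ all good cycles are automatically nested, so Proposition \ref{P_Lip} applies directly, and $|Db|(A)$ is controlled by the maximal function $G_i$ of $(H_i)_\sharp|Db|$, giving $|Db|(A)\le M\|b\|_{L^\infty}|x_1-x_2|$. Cross-comparisons between different $A_i$'s are then handled not by Proposition \ref{P_Lip} at all, but by a compactness/separation argument: one chooses compact flow-invariant sets $\tilde K_i\subset E^i_{M_i}$, sets $\delta:=\min_{i\ne j}\dist(\tilde K_i,\tilde K_j)>0$, and for $x_1\in\tilde K_i$, $x_2\in\tilde K_j$ with $i\ne j$ uses the trivial bound
\begin{equation*}
|X(t,x_1)-X(t,x_2)|\le |x_1-x_2|+2\|b\|_{L^\infty}t\le\Bigl(1+\tfrac{2\|b\|_{L^\infty}t}{\delta}\Bigr)|x_1-x_2|.
\end{equation*}
This is the device you need to import: Proposition \ref{P_Lip} is only used within one monotone piece, and the separation of the compact pieces absorbs the remaining comparisons.
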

	\begin{proof}
	Consider the decomposition of $H$ as in Theorem \ref{T_ABC}.
	By Corollary \ref{C_ABC} we deduce that the regular Lagrangian flows associated to $H$ and to $H_i$ coincide for every $t\ge 0$ and for $\mathscr L^2$-a.e. $x \in A_i$.
	
	Let $N\in \N$ be such that
	\begin{equation*}
	\left| \bigcup_{i>N}A_i \right| <\frac{\e}{4}.
	\end{equation*}
	Denoting by $X_N$ the regular Lagrangian flow associated to $b_N:= \nabla^\perp \sum_{i=1}^NH_i$ it follows from the previous observation that there exists $B_1\subset \R^2$
	such that $|B_1|\le \e/4$ and for every $t\ge 0$ and every $x\in \R^2\setminus B_1$ it holds $X(t,x)=X_N(t,x)$.
	
	For $i=1,\ldots, N$ we denote by $G_i:\R\to [0,+\infty]$ the maximal function of the measure ${H_i}_\sharp |Db|$.  
	For $M>0$ we define 
	\begin{equation*}
	E^i_M:= \bigcup_{(h,j)\in I^i_M}C^j_h,
	\end{equation*}
	where $C^j_h$ are defined in Theorem \ref{T_Bourgain} and the set $I^i_M$ is the set of pairs $(h,j)$ such that $h\in R'$, $\bar i (h,j)=i$, the curve $\gamma^j_h$ is 
	$(1/M,M,M)$-admissible, the period $T(\gamma^j_h)\ge 1/M$ and $G_i(\bar h(h,j))\le M$. Notice that Property (4) in Theorem \ref{T_Bourgain} implies that the curves $\gamma^j_h$ have no cusps so that by 
	Lemma \ref{L_Lip_inverse} we have that for every $i=1,\ldots,N$ it holds 
	\begin{equation*}
	\left|A_i\setminus \bigcup_{M>0} E^i_M\right|=0,
	\end{equation*}
	therefore there exists $M_i$ such that $|A_i\setminus E^i_{M_i}|\le \e/4N$.
	
	In the following we will consider a representative of the flow $X$ as in Remark \ref{R_representative}.
	
	\emph{Claim}. For every $i=1,\ldots,N$ there exists $C_{M_i}>0$ such that for every $x_1,x_2 \in E^i_{M_i}$ and every $t\ge 0$ it holds
	\begin{equation}\label{E_claim}
	|X(t,x_1)-X(t,x_2)|\le C_{M_i}(1+t)|x_1-x_2|.
	\end{equation}
	
	By definition of $E^i_{M_i}$ there exist $(h_1,j_1),(h_2,j_2)\in \R\times \N$ such that $x_1 \in C^{j_1}_{h_1}$, $x_2 \in C^{j_2}_{h_2}$ and $\bar i(h_1,j_1)=\bar i(h_2,j_2)=i$. 
	Denote by $\gamma_1=\gamma_{h_1}^{j_1}$ and $\gamma_2=\gamma_{h_2}^{j_2}$
	By the monotonicity of the Hamiltonian $H_i$ we are in position to apply Proposition \ref{P_Lip} to the curves $\gamma_1,\gamma_2$ (see Remark \ref{R_signs}): in particular
	there exists a uniform constant $\tilde C_{M_i}>0$ such that for every $x_1,x_2 \in E^i_{M_i}$ and for every $t\ge 0$ it holds
	\begin{equation}\label{E_444}
	|X(t,x_1)-X(t,x_2)|\le \tilde C_{M_i}\left(|h_1-h_2| + |Db|(A) + |x_1-x_2|\right)(1+t).
	\end{equation}
	It trivially holds $|h_1-h_2|\le \|b\|_{L^\infty}|x_1-x_2|$ moreover by the monotonicity of $H_i$ we have that $A=H_i^{-1}(\bar h(h_1,j_1),\bar h (h_2,j_2))$, where we assumed without loss of generality that $\bar h(h_1,j_1)<\bar h (h_2,j_2)$. Therefore it holds
	\begin{equation}\label{E_445}
	|Db|(A)= H_i\sharp |Db| ((h_1,h_2)) \le G_i(\bar h(h_1,j_1))|\bar h(h_2,j_2)-\bar h(h_1,j_1)|\le M\|b\|_{L^\infty}|x_1-x_2|.
	\end{equation}
	By \eqref{E_444} and \eqref{E_445} it follows \eqref{E_claim}.
	
	In order to compare the evolutions of trajectories not belonging to the same set $E^i_{M_i}$ we provide an elementary compactness argument.
	For any $i=1,\ldots,N$ let $K_i\subset E^i_{M_i}$ be a compact set such that $ |E^i_{M_i}\setminus K_i|\le \e/4(N+1)$ and let 
	\begin{equation*}
	\tilde K_i:=\{x\in \R^2: \exists x'\in K_i, t\ge 0 \mbox{ s. t. }X(t,x')=x\}. 
	\end{equation*}
	It follows from \eqref{E_claim} that the sets $\tilde K_i$ are compact.
	Moreover let $A_0=\R^2\setminus \bigcup_{i\in \N}A_i$ and let $\tilde K_0\subset A_0$ be a closed set such that $|A_0\setminus \tilde K_0|\le \e/4(N+1)$. 
	By Theorem \ref{T_ABC} we have $\L^1(H(A_0))=0$, therefore $b(x)=0$ for $\L^2$-a.e. $x\in A_0$. In particular for $\L^2$-a.e. $x\in A_0$ and for every $t\ge 0$ it holds
	$X(t,x)=x$.

	Setting $B=\R^2\setminus \bigcup_{i=0}^N\tilde K_i$, we have by construction that $|B|\le \e$. 
	We finally check that there exists $C>0$ such that for every $x_1,x_2\in \bigcup_{i=0}^N\tilde K_i$ it holds 
	\begin{equation}\label{E_thesis}
	|X(t,x_2)-X(t,x_1)|\le C(1+t)|x_1-x_2|.
	\end{equation}
	If $x_1 \in  \tilde K_i$ and $x_2 \in  \tilde K_j$ for some $i\ne j$, then 
	\begin{equation*}
	|X(t,x_1)-X(t,x_2)|\le |x_1-x_2| + 2\|b\|_{L^\infty} t \le \left(1+\frac{2\|b\|_{L^\infty} t}{\delta}\right)|x_1-x_2|,
	\end{equation*}
	where
	\begin{equation*}
	\delta:= \min_{i,j\in 0,\ldots,N}\dist(\tilde K_i,\tilde K_j).
	\end{equation*}
	The case $x_1,x_2\in \tilde K_0$ is trivial and if $x_1,x_2\in\tilde K_i$ for some $i=1,\ldots,N$, then \eqref{E_thesis} follows from \eqref{E_claim}.
	\end{proof}
	
\section{An application to mixing}\label{S_mixing}
	In this section we deduce lower bounds for the two notions of geometric and analytical mixing by means of the Lusin-Lipschitz regularity estimate on the flow obtained in Theorem
	\ref{T_Lip}.
	
	We consider the setting of the previous section and we additionally assume that $B_1$ is an invariant region for the flow $X$ of the vector field $b$.
	Notice that for every cycle $C$ as in Theorem \ref{T_Bourgain}, we have that $\Int C$ is an invariant region for $X$, so that requiring that $B_1$ is invariant is not a strong
	restriction. 
	 We moreover consider \eqref{E_CE_Cauchy} with initial datum of the form 
	$u_0=\chi_A-\chi_{B_1\setminus A}$ for some set $A\subset B_1$ with $\L^2(A)=\L^2(B_1)/2$.
	We will deal with the two following notions of mixing.

	\begin{definition}\label{D_geom}
	Let $k\in (0,1/2)$. We say that the \emph{geometric mixing scale} of $u(t)$ with accuracy parameter $k$ in $B_1$ is the infimum $\mathcal G(u(t))$ of $\delta>0$ 
	such that for every $x\in B_1$ it hold
	\begin{equation*}
	\frac{\L^2(B(x,\delta)\cap u(t)^{-1}(\{1\}))}{\L^2(B(x,\delta))}<1-k \qquad \mbox{and} \qquad \frac{\L^2(B(x,\delta)\cap u(t)^{-1}(\{-1\}))}{\L^2(B(x,\delta))}<1-k.
	\end{equation*}
	\end{definition}
	
	\begin{definition}
	We say that the \emph{functional mixing scale} of $u(t)$ is $\|u(t)\|_{\dot H^{-1}(B_1)}$, where
	\begin{equation*}
	\|u\|_{\dot H^{-1}(B_1)}:=\sup \left\{\int_{B_1}u\phi dx: \|\nabla \phi\|_{L^2}\le 1 \right\}.
	\end{equation*}
	\end{definition}
	
	In order to explicit the dependence of the lower bounds estimates of the mixing with respect to the initial datum we follow \cite{IKX_mixing}:
	given $\alpha_0\in (0,\L^2(B_1))$, $k_0\in (0,1/2)$ and $A\subset B_1$ as above, we denote by
	\begin{equation}\label{E_def_r0}
	\bar r_0(A):= \sup\{r_0:  \L^2(G_0(r_0))\ge \alpha_0\},
	\end{equation}
	where $G_0(r_0)$ is defined by
	\begin{equation*}
	G_0(r_0):=\left\{x\in B_1: \frac{|B(x,r_0)\cap A|}{|B(x,r_0)|}>1-k_0\right\}.
	\end{equation*}
	Notice that a simple application of the Lebesgue differentiation theorem shows that for every $A\subset B_1$ as above it holds $\bar r_0(A)>0$.

	\begin{proposition}\label{P_mixing}
	Let $\bar t, \delta>0$, $k_0,k_1\in (0,\frac{1}{2})$, $A\subset B_1$ such that $\L^2(A)=\L^2(B_1)/2$ and denote by 
	\begin{equation*}
	F_{\bar t}(\delta):=\left\{x\in B_1: \frac{\L^2(B(x,\delta)\cap X(\bar t, A))}{\L^2(B_\delta)}<1-k_1 \right\}.
	\end{equation*}
	Let $\alpha_1>\L^2(B_1\setminus F_{\bar t})$ and suppose that there exists $\e>0$ such that
	\begin{equation}\label{E_parameters}
	\left(\frac{k_1}{10}\left(\frac{1}{4}-k_0\right)-k_0\right)\alpha_0 - \alpha_1k_1 - \e k_1-10 \e>0,
	\end{equation}
	where $\alpha_0>0$ as in \eqref{E_def_r0}.
	Then
	\begin{equation}\label{E_r0delta}
	\delta \ge \frac{\bar r_0(A)}{2C(1+\bar t)},
	\end{equation}
	where $C=C(\e,b)$ is given by Theorem \ref{T_Lip}.
	\end{proposition}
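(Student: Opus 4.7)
The plan is to argue by contradiction: I would assume $\delta < \bar r_0(A)/(2C(1+\bar t))$ and derive a violation of \eqref{E_parameters}. First I would invoke Theorem \ref{T_Lip} with the prescribed $\e$ to obtain the constant $L := C(1+\bar t)$ and a bad set $B \subset \R^2$ with $\L^2(B) \le \e$ such that $X(\bar t, \cdot)|_{\R^2 \setminus B}$ is $L$-Lipschitz. By the definition of the supremum in \eqref{E_def_r0}, I would then choose $r_0$ just below $\bar r_0(A)$ so that $\L^2(G_0(r_0)) \ge \alpha_0$ while maintaining $2L\delta < r_0$; in particular this enforces the scale separation $Lr_0/2 > \delta$, which is what allows the mixing condition to bite.

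The core of the argument produces two estimates on $\L^2(X(\bar t, A) \cap B(y_0, Lr_0/2))$ which must be incompatible. For each $x_0 \in G_0(r_0) \setminus B$, setting $y_0 := X(\bar t, x_0)$, the Lipschitz estimate forces $X(\bar t, B(x_0, r_0/2) \setminus B) \subset B(y_0, Lr_0/2)$; the density condition defining $G_0(r_0)$, combined with the elementary comparison $|B(x_0,r_0) \setminus B(x_0, r_0/2)| = (3/4)|B(x_0, r_0)|$, yields $\L^2(A \cap B(x_0, r_0/2)) \ge (1/4 - k_0)|B(x_0, r_0)|$, which via the measure preservation of the flow (Theorem \ref{T_uniqueness}) translates into the lower bound
\begin{equation*}
\L^2(X(\bar t, A) \cap B(y_0, Lr_0/2)) \ge (1/4 - k_0)|B(x_0, r_0)| - \e.
\end{equation*}
On the other hand, the hypothesis defining $F_{\bar t}(\delta)$ gives, for every $y \in F_{\bar t}(\delta)$, that $\L^2(B(y,\delta)\cap X(\bar t, A)) < (1-k_1)\L^2(B(y,\delta))$; a Fubini-type argument, integrating this pointwise inequality over $y \in B(y_0, Lr_0/2)$ and swapping the order of integration, produces an upper bound on $\L^2(X(\bar t, A) \cap B(y_0, Lr_0/2))$ of the form $(1-k_1)\L^2(B(y_0, Lr_0/2))$ up to a defect controlled by $\L^2(B_1 \setminus F_{\bar t}(\delta)) < \alpha_1$ (points where the pointwise mixing estimate is allowed to fail) and boundary annulus effects.

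To close, I would pair the upper and lower bounds pointwise in $x_0$ and then integrate over $x_0 \in G_0(r_0) \setminus B$ (a set of measure at least $\alpha_0 - \e$), using the incompressibility of $X(\bar t, \cdot)$ to convert the $x_0$-integral into a $y_0$-integral. Collecting terms should produce precisely an inequality of the shape
\begin{equation*}
\Bigl(\tfrac{k_1}{10}\bigl(\tfrac{1}{4}-k_0\bigr)-k_0\Bigr)\alpha_0 - \alpha_1 k_1 - \e k_1 - 10\e \le 0,
\end{equation*}
in direct contradiction with \eqref{E_parameters}. The main obstacle I anticipate is the careful bookkeeping in this final step: the geometric constant $k_1/10$ in \eqref{E_parameters} should arise from a Vitali-type covering of $B(y_0, Lr_0/2)$ by balls of radius $\delta$ with bounded overlap (together with the quantitative comparison between $|B(y_0, Lr_0/2)|$ and $(1/4 - k_0)|B(x_0,r_0)|$ after dividing by $L^2$), while the subtractive losses in $k_0$, $\alpha_1$, and $\e$ must be tracked simultaneously through the Fubini swap and the averaging, so that no slack is introduced that would weaken the contradiction.
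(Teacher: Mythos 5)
Your proposal applies Theorem~\ref{T_Lip} to the \emph{forward} flow $X(\bar t,\cdot)$, obtaining an $L$-Lipschitz bound with $L=C(1+\bar t)$ and the inclusion $X(\bar t, B(x_0, r_0/2)\setminus B)\subset B(y_0, Lr_0/2)$. This is the wrong direction, and the density comparison you set up on $B(y_0, Lr_0/2)$ cannot produce the desired contradiction. The lower bound you obtain, $\L^2\bigl(X(\bar t,A)\cap B(y_0, Lr_0/2)\bigr)\ge (1/4-k_0)\,\L^2(B(x_0,r_0))-\e$, is not stated relative to the correct ball: after dividing by $\L^2(B(y_0,Lr_0/2))=(L/2)^2\L^2(B(x_0,r_0))$ it gives a density of order $(1/4-k_0)(2/L)^2$, which tends to $0$ as $L=C(1+\bar t)\to\infty$. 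This is entirely compatible with every density constraint from $F_{\bar t}(\delta)$, so no contradiction arises at large times regardless of how the Fubini bookkeeping is performed; averaging over $x_0$ does not help since the dilution factor $(2/L)^2$ is uniform. Translated to the backward flow, your hypothesis only yields $|X(-\bar t,u)-X(-\bar t,v)|\ge |u-v|/L$, a lower bound on preimage distances, which is the opposite of what the argument needs.

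The paper's proof instead takes the Lipschitz bound on the \emph{backward} flow $X(-\bar t,\cdot)$ (Theorem~\ref{T_Lip} applied to $-b$), and rather than comparing densities in an $L$-enlarged ball it locates a single pair of points $\tilde x_j, y$ at time $\bar t$ with $|\tilde x_j - y|\le\delta$ whose preimages are at least $r_0/2$ apart. Concretely: two applications of Vitali's covering lemma produce disjoint balls $B(x_i,r_0)$ at time $0$ centered in $G_0(r_0)$ and disjoint balls $B(\tilde x_i,\delta)$ at time $\bar t$ centered in $X(\bar t, \bigcup_i B(x_i,r_0/2))\cap Z_1^c\cap Z_2^c\cap F_{\bar t}(\delta)$, where $Z_1=X(\bar t, \bigcup_i B(x_i,r_0)\setminus A)$ has small measure by the $G_0$ density condition, and $Z_2$ is the Lipschitz-bad set. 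Condition \eqref{E_parameters} is exactly the hypothesis that makes $\L^2(\bigcup_j B(\tilde x_j,\delta)\cap X(\bar t,A^c)\cap Z_2^c)>\L^2(Z_1)$, so that some $y$ in this set has $X(-\bar t,y)\in A^c\setminus\bigcup_i B(x_i,r_0)$, while $X(-\bar t,\tilde x_j)\in B(x_{i_0},r_0/2)$. Since both lie outside $Z_2$, the backward Lipschitz bound gives $r_0/2\le C(1+\bar t)\delta$. You should rework your argument around the backward flow: the mechanism is a distance comparison under $X(-\bar t)$, not a density comparison under $X(\bar t)$.
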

	\begin{remark}
	Notice that for any $k_1\in (0,1/2)$ and $\alpha_0>0$ given, there exist $\alpha_1,k_0,\e>0$ small enough such that \eqref{E_parameters} is satisfied.
	\end{remark}
	\begin{proof}
	Let $r_0<\bar r_0(A)$ be such that $\L^2(G_0(r_0))\ge \alpha_0$.
	By the Vitali's covering lemma there exist $l\in \N$ and a pairwise disjoint family of balls $(B(x_i,r_0))_{i=1}^l$ such that $x_i\in G_0(r_0)$ and
	\begin{equation}\label{E_dim1}
	l\L^2(B_{r_0})=\L^2\left(\bigcup_{i=1}^lB(x_i,r_0)\right) \ge \frac{\alpha_0}{10}.
	\end{equation}
	Denote by
	\begin{equation*}
	Z_1:=X\left(\bar t,  \bigcup_{i=1}^lB(x_i,r_0)\setminus A \right).
	\end{equation*}
	Since the flow preserves the Lebesgue measure, we have
	\begin{equation}\label{E_dim2}
	\L^2(Z_1)=\L^2\left(  \bigcup_{i=1}^lB(x_i,r_0))\setminus A\right) \le k_0 l\L^2(B_{r_0}).
	\end{equation}
	Given $\e>0$ as in the statement, denote by $Z_2\subset B_1$ a set such that $\L^2(Z_2)\le \e$ and $X(-\bar t)\llcorner (B_1\setminus Z_2)$ is $C(1+\bar t)$-Lipschitz provided by Theorem \ref{T_Lip}.
	We denote by
	\begin{equation*}
	C_2:=\bigcup_{i=1}^lB\left(x_i,\frac{r_0}{2}\right)
	\end{equation*}
	By assumption $\L^2(B_1\setminus F_{\bar t}(\delta))<\alpha_1$. Then
	\begin{equation*}
	\begin{split}
	\L^2(X(\bar t, C_2)\cap Z_1^c\cap Z_2^c\cap F_{\bar t}(\delta)) \ge &~ \L^2(C_2) - \L^2(B_1\setminus F_{\bar t}(\delta))-\L^2(Z_1)-\L^2(Z_2) \\
	\ge &~ \frac{l}{4}\L^2(B_{r_0}) - \alpha_1-k_0 l\L^2(B_{r_0})-\e.
	\end{split}
	\end{equation*}
	Again by Vitali's covering lemma for the set $X(\bar t, C_2)\cap Z_1^c\cap Z_2^c\cap F_{\bar t}$, there exists a pairwise disjoint family of balls $(B(\tilde x_i,\delta))_{i=1}^{\tilde l}$
	such that $\tilde x_i\in  X(\bar t, C_2)\cap Z_1^c\cap Z_2^c\cap F_{\bar t}$ and
	\begin{equation}\label{E_dim3}
	\tilde l\L^2(B_\delta) \ge \frac{1}{10}\left( \frac{l}{4}\L^2(B_{r_0}) - \alpha_1-k_0 l\L^2(B_{r_0})-\e\right).
	\end{equation}
	By definition of $F_{\bar t}$ it holds
	\begin{equation*}
	\L^2\left(\bigcup_{i=1}^{\tilde l}B(\tilde x_i,\delta)\cap X(\bar t,A^c)\right) \ge k_1\tilde l \L^2(B_\delta).
	\end{equation*}
	and therefore 
	\begin{equation}\label{E_dim4}
	\begin{split}
	\L^2\left(\bigcup_{i=1}^{\tilde l}B(\tilde x_i,\delta)\cap X(\bar t,A^c)\cap Z_2^c\right)\ge &~\L^2\left(\bigcup_{i=1}^{\tilde l}B(\tilde x_i,\delta)\cap X(\bar t,A^c)\right)  - \L^2(Z_2) \\
	\ge &~ k_1\tilde l \L^2(B_\delta)- \e.
	\end{split}
	\end{equation}
	If 
	\begin{equation}\label{E_ass}
	\L^2\left(\bigcup_{j=1}^{\tilde l}B(\tilde x_j,\delta)\cap X(\bar t,A^c)\cap Z_2^c\right)> \L^2(Z_1),
	\end{equation}
	then
	\begin{equation*}
	\left(\bigcup_{j=1}^{\tilde l}B(\tilde x_j,\delta)\cap X(\bar t,A^c)\cap Z_2^c\right) \setminus Z_1 \ne \emptyset,
	\end{equation*}
	i.e. there exist $j\in 1,\ldots, \tilde l$ and $y \in B(\tilde x_j,\delta)\cap Z_2^c$ such that $X(-\bar t,y)\notin B(x_i,r_0)$ for every $i=1,\ldots, l$. 
	Since $\tilde x_j\in X(\bar t, C_2)\cap Z_2^c$, then 
	\begin{equation*}
	\frac{r_0}{2\delta}\le \frac{|X(-\bar t, \tilde x_j)-X(-\bar t,y)|}{|\tilde x_j-y|}\le C(1+\bar t),
	\end{equation*}
	which immediately implies \eqref{E_r0delta}.
	It is a straightforward computation to check that \eqref{E_ass} is granted by \eqref{E_parameters} 
	by combining \eqref{E_dim1}, \eqref{E_dim2}, \eqref{E_dim3} and \eqref{E_dim4} and this concludes the proof.
	\end{proof}

	\begin{lemma}\label{L_isoperimetric}
	Let $\alpha_0$, $k_0\in (0,1/2)$ and $A\subset B_1$ be such that $\L^2(A)=|B_1|/2$ and let $\bar r_0(A)$ be defined by \eqref{E_def_r0}.
	Assume moreover that 
	\begin{equation*}
	\beta:= \frac{\L^2(B_1)}{2}-\alpha_0>0.
	\end{equation*}
	Then there exists $c=c(\beta, k_0)$ such that
	\begin{equation*}
	\Per(A,B_1) \ge \frac{c}{\bar r_0(A)}.
	\end{equation*}
	\end{lemma}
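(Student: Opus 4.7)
My plan is to bound $|G_0(r)|$ from below by $|A|$ minus a quantity proportional to $r\,\Per(A,B_1)/k_0$ uniformly in $r$, and then to invert this bound at the threshold $r=\bar r_0(A)$ defined by \eqref{E_def_r0}.

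To this end, I would rewrite the density appearing in the definition of $G_0$ as a convolution: setting $\psi_r:=\chi_{B(0,r)}/|B(0,r)|$, the function $f_r:=\chi_A*\psi_r$ satisfies $f_r(x)=|A\cap B(x,r)|/|B(x,r)|$. Since $A\subset B_1$, the jump of $\chi_A$ across $\partial B_1$ contributes at most $\mathcal H^1(\partial B_1)=2\pi$, so $|D\chi_A|(\R^2)\le\Per(A,B_1)+2\pi=:\tilde P$. The standard $\BV$ translation estimate $\|\chi_A-T_h\chi_A\|_{L^1}\le|h|\,|D\chi_A|(\R^2)$, integrated against $\psi_r$, yields
\begin{equation*}
\|f_r-\chi_A\|_{L^1(\R^2)}\le r\,|D\chi_A|(\R^2)\le r\tilde P.
\end{equation*}

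The point of introducing $f_r$ is that on $A\cap G_0(r)^c$ one has $\chi_A(x)-f_r(x)\ge k_0$ by the very definition of $G_0$. Chebyshev's inequality then gives
\begin{equation*}
|A\cap G_0(r)^c|\le\frac{1}{k_0}\|f_r-\chi_A\|_{L^1}\le\frac{r\tilde P}{k_0},
\end{equation*}
whence $|G_0(r)|\ge|A|-r\tilde P/k_0=|B_1|/2-r\tilde P/k_0$. Whenever $r<k_0\beta/\tilde P$ the right-hand side exceeds $\alpha_0$, so $r\le\bar r_0(A)$ by the definition of the supremum in \eqref{E_def_r0}. Letting $r$ approach this threshold produces
\begin{equation*}
\Per(A,B_1)+2\pi\ge\frac{k_0\beta}{\bar r_0(A)}.
\end{equation*}

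The only remaining step is to absorb the additive constant $2\pi$ into a multiplicative one. This is harmless: since $\bar r_0(A)\le\diam(B_1)=2$ is trivially bounded, and since the relative isoperimetric inequality on $B_1$ gives a universal lower bound $\Per(A,B_1)\ge P_0>0$ whenever $|A|=|B_1|/2$, a short case distinction based on whether $\bar r_0(A)$ is smaller or larger than $k_0\beta/(4\pi)$ delivers the final inequality with some constant $c=c(\beta,k_0)>0$. The main (and essentially only) technical nuisance of the proof is the bookkeeping of the boundary contribution $|D\chi_A|(\partial B_1)$; the core of the argument is the soft combination of the $\BV$ convolution estimate with Chebyshev's inequality.
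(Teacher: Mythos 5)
Your proof is correct, but it takes a genuinely different route from the paper's. The paper localizes: starting from the observation that $\L^2(A\cap G_0(\bar r)^c)>\beta$, it applies a Besicovitch covering to extract disjoint balls $B(x_i,\bar r)$ centred at points of $A\cap G_0(\bar r)^c$, argues by a counting/pigeonhole step that a positive fraction of them satisfy a two-sided density bound $k\le |A\cap B(x_i,\bar r)|/|B_{\bar r}|\le 1-k$, applies the relative isoperimetric inequality in each such ball to get $\Per(A,B(x_i,\bar r))\gtrsim \bar r$, and then sums over the disjoint balls. Your argument is global and softer: you write the density as a convolution $f_r=\chi_A*\psi_r$, invoke the $\BV$ translation estimate to bound $\|f_r-\chi_A\|_{L^1}\le r\,|D\chi_A|(\R^2)$, and note that on $A\cap G_0(r)^c$ the quantity $\chi_A-f_r$ is at least $k_0$, so Chebyshev gives $|A\cap G_0(r)^c|\le r\,|D\chi_A|(\R^2)/k_0$. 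Inverting at the threshold $\bar r_0(A)$ gives $\Per(A,B_1)+2\pi\ge k_0\beta/\bar r_0(A)$, and your case split on the size of $\bar r_0(A)$ (using $\bar r_0(A)\le 2$ and the relative isoperimetric lower bound $\Per(A,B_1)\ge P_0>0$ when $|A|=|B_1|/2$) correctly absorbs the additive $2\pi$ into a multiplicative constant. Your route avoids the covering and the combinatorial bookkeeping of \eqref{E_est_density} entirely, at the modest cost of the extra step to dispose of the boundary contribution on $\partial B_1$; both proofs ultimately use the relative isoperimetric inequality, but in your version it appears only in the final normalization step rather than as the engine of the estimate.
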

	\begin{proof}
	Let $\bar r \in(\bar r_0(A),1)$, then 
	\begin{equation*}
	\L^2(A\cap G_0(\bar r)^c)>\frac{\L^2(B_1)}{2}-\alpha_0=:\beta>0.
	\end{equation*}
	 By Besicovitch's covering theorem, there exists an absolute constant $\underline c>0$ and a pairwise disjoint family of balls 
	$(B(x_i,\bar r))_{i=1}^l$ with $x_i\in A\cap G_0(\bar r)^c$ and
	\begin{equation*}
	\L^2\left(\bigcup_{i=1}^lB(x_1,\bar r) \cap A\cap G_0(\bar r)^c\right)\ge \frac{\beta}{\underline c}.
	\end{equation*}
	Given $\tilde k_1\in (0,1/2)$ by elementary combinatorics, there exist at least $N$ indexes $i_1,\ldots,i_N \in\{1,\ldots, l\}$ such that
	\begin{equation}\label{E_est_density}
	\frac{\L^2(B(x_i,\bar r)\cap A)}{\L^2(B_{\bar r})}\ge \tilde k_1, \qquad \mbox{with} \qquad N= \left\lceil \frac{\frac{\beta}{\underline c}-l\tilde k_1\L^2(B_{\bar r})}{(1-\tilde k_1-k_0)\L^2(B_{\bar r})}\right\rceil.
	\end{equation}
	Choosing $\tilde k_1$ such that 
	\begin{equation*}
	l\tilde k_1\L^2(B_{\bar r})<\frac{\beta}{2\underline c},
	\end{equation*}
	we get the estimate
	\begin{equation}\label{E_est_N}
	N \ge  \left\lceil \frac{\beta}{2\underline c \L^2(B_{\bar r})} \right\rceil.
	\end{equation} 	Let $k:=\min\{k_0,\tilde k_1\}$. By the definition of $G_0$ and \eqref{E_est_density} for every $i\in \{i_1,\ldots, i_N\}$ it holds
	\begin{equation*}
	k\le \frac{\L^2(B(x_i,\bar r)\cap A)}{\L^2(B_{\bar r})}\le 1-k,
	\end{equation*}
	therefore by the relative isoperimetric inequality there exists a constant $c_1=c_1(k)$ such that for every  $i\in \{i_1,\ldots, i_N\}$ it holds
	\begin{equation*}
	\Per(A,(B(x_i,\bar r)))\ge c_1 \bar r.
	\end{equation*}
	Being the balls $B(x_i,\bar r)$ disjoint, we get by \eqref{E_est_N} that there exists $c=c(k,\beta)>0$ such that
	\begin{equation*}
	\Per(A,B_1)\ge \frac{c}{\bar r}. \qedhere
	\end{equation*}
	Since $\bar r>\bar r_0(A)$ is arbitrary this concludes the proof.
	\end{proof}
	
	\begin{corollary}\label{C_mix_g}
	Let $k\in (0,1/2)$ be the accuracy parameter as in Definition \ref{D_geom}. Then there exists $c_g=c_g(b,k)>0$ such that for every $t>0$
	\begin{equation}\label{E_mix_g}
	\mathcal G(u(t))\ge c_g\frac{\bar r_0(A)}{1+t}.
	\end{equation}
	Moreover there exists $\tilde c_g=\tilde c_g(b,k)>0$ such that for every $A$ as above and such that $\Per(A,B_1)<\infty$ and for every $t\ge 0$ it holds
	\begin{equation}\label{E_mix_g_per}
	\mathcal G(u(t))\ge \frac{\tilde c_g}{(1+t)\Per(A,B_1)}.
	\end{equation}
	\end{corollary}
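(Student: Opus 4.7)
The plan is to derive both inequalities as essentially immediate consequences of Proposition \ref{P_mixing}, using Lemma \ref{L_isoperimetric} at the last step to upgrade the first bound into a bound in terms of the perimeter.

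For \eqref{E_mix_g}, I fix the accuracy parameter $k_1 := k$ in the statement of Proposition \ref{P_mixing}, then choose $k_0\in(0,1/2)$ small enough so that the coefficient $\frac{k_1}{10}\bigl(\frac{1}{4}-k_0\bigr)-k_0$ multiplying $\alpha_0$ in \eqref{E_parameters} is strictly positive (for instance $k_0=k/100$ works for any $k<1/2$). I then fix $\alpha_0 = \L^2(B_1)/4\in\bigl(0,\L^2(B_1)/2\bigr)$; since $\L^2(A)=\L^2(B_1)/2$, the Lebesgue differentiation theorem applied at density points of $A$ guarantees $\bar r_0(A)>0$. With $k$, $k_0$, $\alpha_0$ now fixed, I select $\alpha_1,\e>0$ small enough, depending only on $k$ and $\L^2(B_1)$, so that \eqref{E_parameters} is satisfied.

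Suppose now $\delta>0$ is any scale at which the geometric mixing conditions of Definition \ref{D_geom} hold with parameter $k$. Then for every $x\in B_1$,
\begin{equation*}
\frac{\L^2(B(x,\delta)\cap X(\bar t,A))}{\L^2(B(x,\delta))}<1-k,
\end{equation*}
which means $F_{\bar t}(\delta)=B_1$ in the notation of Proposition \ref{P_mixing}, and so $\L^2(B_1\setminus F_{\bar t}(\delta))=0<\alpha_1$. All the hypotheses of Proposition \ref{P_mixing} are thus met with these parameters, yielding $\delta\ge \bar r_0(A)/(2C(\e,b)(1+\bar t))$. Taking the infimum over all such $\delta$ gives \eqref{E_mix_g} with $c_g := 1/(2C(\e,b))$, which depends only on $b$ and $k$ (through $\e$).

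For \eqref{E_mix_g_per}, I apply Lemma \ref{L_isoperimetric} with the same choice $\alpha_0=\L^2(B_1)/4$, so that $\beta = \L^2(B_1)/4>0$. The lemma produces a constant $c=c(\beta,k_0)$, depending only on $\L^2(B_1)$ and $k$, such that $\bar r_0(A)\ge c/\Per(A,B_1)$. Substituting into \eqref{E_mix_g} yields \eqref{E_mix_g_per} with $\tilde c_g := c\cdot c_g$, still depending only on $b$ and $k$.

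The main obstacle is essentially parameter bookkeeping: one must check that the single inequality \eqref{E_parameters} admits a consistent choice of $k_0,\alpha_0,\alpha_1,\e$ that depends only on the accuracy parameter $k$ (and on $\L^2(B_1)$, which is fixed) and not on the set $A$ or the time $\bar t$. Since $C(\e,b)$ from Theorem \ref{T_Lip} is not explicit in $\e$, this is the only reason the final constants $c_g,\tilde c_g$ are likewise not explicit in $k$ beyond the qualitative dependence.
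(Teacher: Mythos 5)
Your proposal is correct and follows the paper's argument exactly: fix $k_1=k$, $\alpha_0=\L^2(B_1)/4$, choose $k_0,\alpha_1,\e$ small enough (depending only on $k$ and $\L^2(B_1)$) so that \eqref{E_parameters} holds, observe that $\mathcal G(u(t))<\delta$ forces $F_{\bar t}(\delta)=B_1$, invoke Proposition \ref{P_mixing}, and then use Lemma \ref{L_isoperimetric} to pass from $\bar r_0(A)$ to $\Per(A,B_1)^{-1}$. The only cosmetic difference is that the paper records explicit numeric values ($k_0=k/160$, $\alpha_1=\alpha_0/480$, $\e=\alpha_0 k/(480\cdot 11)$) while you argue abstractly that admissible parameters exist, which is equally valid.
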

	\begin{proof}
	Assume that $\mathcal G(u(t))<\delta$ for some $\delta>0$. 
	Then $\L^2(B_1\setminus F_t(\delta))=0$, therefore we are in position to apply Proposition \ref{P_mixing} with the following choice of parameters:
	\begin{equation}\label{E_choice}
	k_1=k, \quad \alpha_0=\frac{\L^2(B_1)}{4}, \quad k_0=\frac{k}{160},\quad \alpha_1=\frac{\alpha_0}{480} \quad\mbox{and} \quad \e=\frac{\alpha_0k}{480\cdot 11}.
	\end{equation}
	It is trivial to check that \eqref{E_parameters} holds. Therefore setting $c_g=(C(\e(k),b))^{-1}>0$, where $C$ is defined in Proposition \ref{P_Lip}, it holds
	\begin{equation*}
	\delta \ge c_g\frac{\bar r_0(A)}{1+t}.
	\end{equation*}
	Since $\delta > \mathcal G(u(t))$ is arbitrary, this proves \eqref{E_mix_g}.
	Finally by Lemma \ref{L_isoperimetric} it follows that if there exists a constant $\tilde c=\tilde c(k)>0$ such that 
	\begin{equation*}
	\bar r_0(A) \ge \frac{\tilde c}{\Per(A,B_1)}.
	\end{equation*}
	Therefore, setting $\tilde c_g=c_g\tilde c$ we get \eqref{E_mix_g_per}.
	\end{proof}

	\begin{corollary}
	Let $A\subset B_1$ with $\L^2(A)=\L^2(B_1)/2$ and let $u$ be the bounded weak solution to \eqref{E_CE_Cauchy} with $u_0=\chi_A-\chi_{B_1\setminus A}$. 
	Then there exists $c_a=c_a(b)$ such that for every $t\ge 0$
	\begin{equation}\label{E_mix_a}
	\|u(t)\|_{\dot H^{-1}(B_1)}\ge c_a\frac{\tilde r_0(A)}{1+t},
	\end{equation}
	where $\tilde r_0(A)$ is $\bar r_0(A)$ defined in \eqref{E_def_r0}, where we have chosen $k_0=\frac{1}{640}$ and $\alpha_0=\L^2(B_1)/4$.
	
	Moreover there exists $\tilde c_a=\tilde c_a(b)$ such that if $\Per(A,B_1)<\infty$, then for every $t\ge 0$ it holds
	\begin{equation}\label{E_mix_a_per}
	\|u(t)\|_{\dot H^{-1}(B_1)}\ge\frac{\tilde c_a}{(1+t)\Per(A,B_1)}.
	\end{equation}
	\end{corollary}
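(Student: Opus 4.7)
The strategy is to convert the $\dot H^{-1}$ bound into a geometric mixing bound and then invoke Proposition \ref{P_mixing}. The linchpin is the following \emph{key lemma}: if $u\in L^\infty(\R^2)$ is supported in $B_1$, $\int u=0$, and $u_\delta(x):=\fint_{B(x,\delta)}u\,dy$ (with $u$ extended by zero), then
\begin{equation*}
\|u_\delta\|_{L^2(\R^2)}^2 \le \frac{C_0\,\|u\|_{\dot H^{-1}(B_1)}^2}{\delta^2}
\end{equation*}
for some absolute constant $C_0$. Writing $\rho_\delta:=\chi_{B_\delta}/|B_\delta|$ and $\sigma_\delta:=\rho_\delta*\rho_\delta$, one has $\|u_\delta\|_{L^2}^2 = \int u\cdot(u*\sigma_\delta)$, which by duality is bounded by $\|u\|_{\dot H^{-1}(B_1)}\cdot\|\nabla(u*\sigma_\delta)\|_{L^2}$. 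Since $\hat\rho_\delta(\xi)=\hat\rho(\delta\xi)$, a direct computation shows $\sup_\xi|\xi|^2|\hat\sigma_\delta(\xi)|\le C/\delta^2$; combined with $\int|\hat u|^2/|\xi|^2 = \|u\|_{\dot H^{-1}}^2$ this gives $\|\nabla(u*\sigma_\delta)\|_{L^2}\le C\|u\|_{\dot H^{-1}}/\delta^2$, proving the claim.

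With this lemma in hand, set $\eta:=\|u(t)\|_{\dot H^{-1}(B_1)}$. Since $u(t)=2\chi_{X(t,A)}-1$ on $B_1$ and vanishes outside, for every $x\in B_1\setminus F_t(\delta)$ with parameter $k_1=1/4$ we have $\L^2(B(x,\delta)\cap X(t,A))/|B_\delta|\ge 1-k_1$, hence $u_\delta(x)\ge 2(1-k_1)-1=1/2$. Chebyshev's inequality then yields
\begin{equation*}
\L^2(B_1\setminus F_t(\delta)) \le \L^2\bigl(\{u_\delta\ge 1/2\}\bigr) \le 4\|u_\delta\|_{L^2}^2 \le \frac{4C_0\,\eta^2}{\delta^2}.
\end{equation*}

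To finish \eqref{E_mix_a}, fix the parameters $k_1=1/4$, $k_0=1/640$, $\alpha_0=\L^2(B_1)/4$ and choose $\alpha_1,\e$ so that condition \eqref{E_parameters} is satisfied (exactly as in \eqref{E_choice} of the proof of Corollary \ref{C_mix_g}). Setting $\delta:=2\sqrt{C_0/\alpha_1}\,\eta$, the Chebyshev estimate forces $\L^2(B_1\setminus F_t(\delta))<\alpha_1$, so Proposition \ref{P_mixing} applies and yields $\delta\ge \tilde r_0(A)/(2C(1+t))$ with $C=C(\e,b)$. Substituting the definition of $\delta$ gives $\eta\ge c_a\,\tilde r_0(A)/(1+t)$ with $c_a:=\bigl(4C\sqrt{C_0/\alpha_1}\bigr)^{-1}$. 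The perimeter-dependent bound \eqref{E_mix_a_per} then follows immediately by combining \eqref{E_mix_a} with Lemma \ref{L_isoperimetric}: the choice of $\alpha_0=\L^2(B_1)/4$ gives $\beta=\L^2(B_1)/2-\alpha_0=\L^2(B_1)/4>0$, so the lemma applies and yields $\tilde r_0(A)\ge \tilde c/\Per(A,B_1)$ for $\tilde c=\tilde c(k_0,\beta)$.

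The main obstacle is the justification of the key lemma in the bounded-domain setting: the convolved test function $u*\sigma_\delta$ is supported in $B_{1+2\delta}$ rather than $B_1$. This is circumvented by observing that, since $u$ itself is supported in $B_1$, the pairing $\int u\cdot(u*\sigma_\delta) = \int_{B_1} u\cdot(u*\sigma_\delta)$ may be bounded by $\|u\|_{\dot H^{-1}(B_1)}\cdot \|\nabla(u*\sigma_\delta)\|_{L^2(B_1)}$, and the latter norm is $\le \|\nabla(u*\sigma_\delta)\|_{L^2(\R^2)}$; no cutoff is needed and all constants remain independent of $t$.
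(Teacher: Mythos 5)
Your proof is correct, and it takes a genuinely different route from the paper. The paper argues in the \emph{opposite} direction: it sets $\delta := \tilde r_0(A)/(4C(1+t))$, applies the contrapositive of Proposition~\ref{P_mixing} to conclude $\L^2(B_1\setminus F_t(\delta))\ge\alpha_1$, and then constructs an explicit test function $\phi=\sum_i\phi_i$ (a sum of smooth bumps centred on a disjoint Vitali family of $\delta$-balls inside $B_1\setminus F_t(\delta)$) for which $\int u(t)\phi\gtrsim\alpha_1$ and $\|\nabla\phi\|_{L^2}\lesssim 1/\delta$, giving $\|u(t)\|_{\dot H^{-1}}\gtrsim\delta$ directly from the duality definition. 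You instead invoke a Fourier/duality lemma — a version of the well-known ``mix-norm vs.\ local-average'' inequality $\|u_\delta\|_{L^2}\lesssim\|u\|_{\dot H^{-1}}/\delta$ — combined with Chebyshev to show that a small $\dot H^{-1}$ norm at scale $\delta\sim\|u(t)\|_{\dot H^{-1}}$ forces $\L^2(B_1\setminus F_t(\delta))<\alpha_1$, and then apply Proposition~\ref{P_mixing} in the forward direction. Both are sound: your approach is more in the spirit of \cite{IKX_mixing,Seis_mixing} and cleanly decouples the Fourier estimate from the flow argument, at the cost of needing Plancherel and the decay of $\hat\rho$ and of being more careful with what $\dot H^{-1}(B_1)$ means for convolutions supported slightly outside $B_1$ (a point you correctly address); the paper's construction is entirely elementary and avoids Fourier analysis. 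One small loose end in your write-up: with $\delta:=2\sqrt{C_0/\alpha_1}\,\eta$ the Chebyshev bound gives $\L^2(B_1\setminus F_t(\delta))\le\alpha_1$, whereas Proposition~\ref{P_mixing} requires strict inequality — replace the factor $2$ by any constant $>2$ (or take the obvious limit) to fix this.
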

	\begin{proof}
	We consider the same choice of parameters as in \eqref{E_choice} and we additionally impose $k=1/4$.
	Let 
	\begin{equation}\label{E_def_delta}
	\delta= \frac{\tilde r_0(A)}{4C(1+t)},
	\end{equation}
	with the same constant $C$ as in Proposition \ref{P_mixing}. It follows by Proposition \ref{P_mixing} that 
	\begin{equation*}
	\L^2(B_1\setminus F_{t}(\delta)) \ge \alpha_1.
	\end{equation*}
	By definition of $F_t(\delta)$ and by Vitali's covering lemma there exists $l\in \N$ and a pairwise disjoint family of balls $(B(x_i,\delta))_{i=1}^l$ such that 
	$x_i\in B_1\setminus F_t(\delta)$ and $l\L^2(B_\delta)\ge \alpha_1/10$.
	Given $a,b>0$, denote by $\psi_{a,b}:[0,+\infty)\to [0,1]$ a smooth function such that 
	$\psi_{a,b}(r)=1$ if $r\le a$, $\psi_{a,b}(r)=0$ if $r\ge a+b$ and $|\psi'_{a,b}|(r)\le 2/b$ for every $r\ge 0$. 
	For every $i=1,\ldots,l$ we consider $\phi_i:\R^2\to \R$ defined by
	\begin{equation*}
	\phi_i(x):=\psi_{a,b}(|x-x_i|), \qquad \mbox{where}\qquad a=\delta\sqrt{\frac{3}{4}} \quad \mbox{and} \quad b=\delta -a.
	\end{equation*}
	By a straightforward computation there exists an absolute constant $\bar c$ such that
	\begin{equation}\label{E_536}
	\int_{B_1}|\nabla \phi_i|^2dx \le \bar c.
	\end{equation}
	Moreover, since $x_i \in B_1\setminus F_t(\delta)$ it follows that
	\begin{equation}\label{E_537}
	\begin{split}
	\int_{B_1}u(t)\phi_idx \ge&~  \L^2(B(x_i,a))-2\L^2(u(t)^{-1}(\{-1\})\cap B(x_i,\delta)) \\
	\ge &~ \L^2(B(x_i,a))-\frac{1}{2}\L^2(B(x_i,\delta)) \\
	= &~ \frac{\L^2(B_\delta)}{4}.	
	\end{split}
	\end{equation}
	Set $\phi=\sum_{i=1}^l\phi_i$. Since the balls $(B_i)_{i=1}^l$ are disjoint and they are all contained in $B_2$ it holds $l\L^2(B_\delta)\le \L^2(B_2)$, 
	therefore by \eqref{E_536} it follows that
	\begin{equation*}
	\left(\int_{B_1}|\nabla\phi|^2dx\right)^{1/2} \le \sqrt{l\bar c}\le \sqrt{\bar c}\sqrt{\frac{\L^2(B_2)}{\L^2(B_\delta)}} = \sqrt{\bar c}\frac{2}{\delta}.
	\end{equation*}
	By \eqref{E_537} instead it follows that 
	\begin{equation*}
	\int_{B_1}u(t)\phi_idx \ge\frac{l}{4}\L^2(B_\delta) \ge \frac{\alpha_1}{40},
	\end{equation*}
	so that there exists an absolute constant $c>0$ such that 
	\begin{equation*}
	\|u(t)\|_{\dot H^{-1}(B_1)} \ge c\delta
	\end{equation*}
	so that \eqref{E_mix_a} follows by the choice of $\delta$ in \eqref{E_def_delta}. The same argument as in the proof of Corollary \ref{C_mix_g} by means of 
	Lemma \ref{L_isoperimetric} shows \eqref{E_mix_a_per} and this concludes the proof.
	\end{proof}

\bibliographystyle{alpha}

\begin{thebibliography}{ABC14b}

\bibitem[ABC13]{ABC2}
G. Alberti, S. Bianchini, and G. Crippa.
\newblock Structure of level sets and {S}ard-type properties of {L}ipschitz
  maps.
\newblock {\em Ann. Sc. Norm. Super. Pisa Cl. Sci. (5)}, 12(4):863--902, 2013.

\bibitem[ABC14a]{ABC3}
G. Alberti, S. Bianchini, and G. Crippa.
\newblock On the {$L^p$}-differentiability of certain classes of functions.
\newblock {\em Rev. Mat. Iberoam.}, 30(1):349--367, 2014.

\bibitem[ABC14b]{ABC1}
G. Alberti, S. Bianchini, and G. Crippa.
\newblock A uniqueness result for the continuity equation in two dimensions.
\newblock {\em J. Eur. Math. Soc. (JEMS)}, 16(2):201--234, 2014.

\bibitem[AFP00]{AFP_book}
L.~Ambrosio, N.~Fusco, and D.~Pallara.
\newblock {\em Functions of Bounded Variation and Free Discontinuity Problems}.
\newblock Oxford Science Publications. Clarendon Press, 2000.

\bibitem[Amb04]{Ambrosio_transport}
L. Ambrosio.
\newblock Transport equation and {C}auchy problem for {$BV$} vector fields.
\newblock {\em Invent. Math.}, 158(2):227--260, 2004.

\bibitem[BB17]{BB_Bressan}
S.~Bianchini and P.~Bonicatto.
\newblock A uniqueness result for the decomposition of vector fields in
  $\mathbb{R}^d$.
\newblock {\em (preprint)}, 2017.

\bibitem[BCZ17]{BCZ_enhanced}
J. Bedrossian and M. Coti~Zelati.
\newblock Enhanced dissipation, hypoellipticity and anomalous small noise
  inviscid limits in shear flows.
\newblock {\em Arch. Ration. Mech. Anal.}, 224(3):1164--1204, 2017.

\bibitem[BKK13]{BKK_Sard}
J. Bourgain, M. Korobkov, and J. Kristensen.
\newblock On the {M}orse-{S}ard property and level sets of {S}obolev and {BV}
  functions.
\newblock {\em Rev. Mat. Iberoam.}, 29(1):1--23, 2013.

\bibitem[Bre03]{Bressan_mix_conj}
A. Bressan.
\newblock A lemma and a conjecture on the cost of rearrangements.
\newblock {\em Rend. Sem. Mat. Univ. Padova}, 110:97--102, 2003.

\bibitem[BT11]{BT_monotone}
S. Bianchini and D. Tonon.
\newblock A decomposition theorem for {$BV$} functions.
\newblock {\em Commun. Pure Appl. Anal.}, 10(6):1549--1566, 2011.

\bibitem[CDL08]{CDL_DiPerna-Lions}
G. Crippa and C. De~Lellis.
\newblock Estimates and regularity results for the {D}i{P}erna-{L}ions flow.
\newblock {\em J. Reine Angew. Math.}, 616:15--46, 2008.

\bibitem[CLS19]{CLS_mixing}
G. Crippa, R. Luc{\`a}, and C. Schulze.
\newblock Polynomial mixing under a certain stationary {E}uler flow.
\newblock {\em Phys. D}, 394:44--55, 2019.

\bibitem[DL89]{DPL_transport}
R.~J. DiPerna and P.-L. Lions.
\newblock Ordinary differential equations, transport theory and {S}obolev
  spaces.
\newblock {\em Invent. Math.}, 98(3):511--547, 1989.

\bibitem[IKX14]{IKX_mixing}
G. Iyer, A. Kiselev, and X. Xu.
\newblock Lower bounds on the mix norm of passive scalars advected by
  incompressible enstrophy-constrained flows.
\newblock {\em Nonlinearity}, 27(5):973--985, 2014.

\bibitem[LTD11]{LTD_mixing}
Z. Lin, J.L. Thiffeault, and C.~R. Doering.
\newblock Optimal stirring strategies for passive scalar mixing.
\newblock {\em J. Fluid Mech.}, 675:465--476, 2011.

\bibitem[MMP05]{MMP_mixing}
G.Mathew, I. Mezi\'{c}, and L. Petzold.
\newblock A multiscale measure for mixing.
\newblock {\em Phys. D}, 211(1-2):23--46, 2005.

\bibitem[Sei13]{Seis_mixing}
C. Seis.
\newblock Maximal mixing by incompressible fluid flows.
\newblock {\em Nonlinearity}, 26(12):3279--3289, 2013.

\bibitem[Zil17]{Zillinger_circular}
C. Zillinger.
\newblock On circular flows: linear stability and damping.
\newblock {\em J. Differential Equations}, 263(11):7856--7899, 2017.

\bibitem[Zil18]{Zillinger_mix_g_a}
C. Zillinger.
\newblock On geometric and analytic mixing scales: comparability and
  convergence rates for transport problems.
\newblock 2018.

\end{thebibliography}

\end{document}